\theoremstyle{definition}
\newtheorem{defn}{Definition}[section]
\newtheorem{rem}[defn]{Remark}
\newtheorem{rmk}[defn]{Remark}
\theoremstyle{plain}
\newtheorem{thm}{Theorem}
\theoremstyle{plain}
\newtheorem{tthm}[defn]{Theorem}
\newtheorem{prop}[defn]{Proposition}
\newtheorem{lem}[defn]{Lemma}
\newcommand{\sgn}{\mathrm{\text{sgn}}}
\newcommand{\Z}{\mathbb{Z}} 
\newcommand{\hpc}{\vcenter{\hbox{\includegraphics[scale=.5]{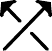}}}}
\newcommand{\hnc}{\vcenter{\hbox{\includegraphics[scale=.5]{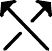}}}}
\newcommand{\hco}{\vcenter{\hbox{\includegraphics[scale=.5]{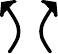}}}}
\begin{document}
\phantom{A}
\vspace{-25mm}
\title{A topological model for the HOMFLY-PT polynomial}
\author[C. Anghel]{Cristina Ana-Maria Anghel}
\author[C. Lee]{Christine Ruey Shan Lee}
\address[]{Universit\'e Clermont Auvergne-LMBP, Campus des C\'ezeaux 3, place Vasarely, 63178 Aubi\`ere, France; Institute of Mathematics “Simion Stoilow” of the Romanian Academy, 21 Calea Grivitei Street, 010702 Bucharest, Romania.}
\email[] {\tt cristina.anghel@uca.fr, cranghel@imar.ro}

\address[]{Texas State University, Department of Mathematics, San Marcos, TX}

\email[]{vne11@txstate.edu}

\vspace{-5mm}
\begin{abstract}
We give the first known topological model for the HOMFLY-PT polynomial constructed directly from link diagrams. More precisely, we prove that this invariant is given by graded intersections between explicit Lagrangian submanifolds in a {\em fixed configuration space} on a {\em Heegaard surface} for the link exterior. The submanifolds are supported on a collection of arcs and ovals on the Heegaard surface. We also obtain two topological models for the Jones polynomial via a Heegaard surface associated {\em to a link diagram}. 
 This opens up new avenues for constructing categorifications for the Jones and the HOMFLY-PT polynomials of a geometric nature. 

\end{abstract}

\maketitle
\vspace{-10mm}

\section{Introduction}
 Toward the goal of understanding the geometric information encoded in quantum link invariants for a link $L\subset S^3$, an approach is to realize these invariants as graded intersection pairings in configuration spaces. We call this a \emph{topological model}. Bigelow \cite{Bigelow2002} and Lawrence \cite{Lawrence1993} provided the first topological model for the Jones polynomial. Recently, the first author defined topological models for quantum generalizations of these invariants  \cite{Anghel2022Globalisation,Anghel2023TAMS,Anghel2023QT,Anghel2024Adv}. 

In \cite{Bigelow2007} Bigelow gave a homological model for the $A_N$ polynomials, which are 1-variable specializations of the HOMFLY-PT polynomial $P_L(a, z)$.
The number of particles in his configuration space for $A_N$ depends on $N$. This leads to a description of the HOMFLY-PT polynomial via infinite sequences of intersections in different configuration spaces. The question of finding a  topological model for the HOMFLY-PT polynomial in the same configuration space with a fixed number of particles has remained open. Moreover, all previous topological models rely on a choice of a braid representative for the link. This relates to the open problem of constructing a categorification of the HOMFLY-PT polynomial directly from link diagrams \cite{Abel, khovanov-rozansky} as the celebrated Khovanov-Rozansky homology  \cite{khovanov-rozansky} also depends on choosing a braid representative. 

Motivated by these questions, the purpose of this paper is to
\begin{enumerate}
\item {  construct a  topological model for the HOMFLY-PT polynomial in a fixed configuration space} (Theorem \ref{t.main}), and
\item {   give a new method for constructing topological models for quantum invariants directly from a Heegaard suface of $S
^3\setminus L$ that does not rely on braid representatives} (Theorem \ref{t.main2}). 
\end{enumerate}

\paragraph{\textbf{Main Results.}} \ 


\begin{thm}[Topological model for the HOMFLY-PT polynomial]
\label{t.main} For a link $L\subset S^3$ let $D$ be a diagram for $L$. 
Let $\Theta_H(D) \in \mathbb{Z}[a^{\pm 1}, z^{\pm 1}]$ be the state sum of graded intersections between explicit Lagrangian submanifolds in a fixed configuration space on the Heegaard surface $\Sigma$ defined from $D$, as in Definition \ref{intHintr}. Then this topological model recovers the HOMFLY-PT invariant: 
 \begin{equation*}
   \Theta_H(D) = P_L(a, z).  
 \end{equation*}
\end{thm}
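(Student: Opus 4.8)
The plan is to prove Theorem~\ref{t.main} by turning the global graded intersection $\Theta_H(D)$ into a \emph{local-to-global state sum} and then matching the local pieces with an algebraic state-sum formula for $P_L(a,z)$ that is already known to compute the HOMFLY-PT polynomial. I would first unwind Definition~\ref{intHintr}: the Heegaard surface $\Sigma=\Sigma(D)$ for the link exterior built from $D$, the \emph{fixed} punctured configuration space $C=\mathrm{Conf}_m(\Sigma\setminus\{p_1,\dots,p_k\})$ whose particle number $m$ and puncture number $k$ are read off from the crossings of $D$, the covering $\widehat C\to C$ classified by the map $\pi_1(C)\to\mathbb{Z}\langle a\rangle\oplus\mathbb{Z}\langle z\rangle$ recording the two independent winding numbers (particle--puncture linking and particle--particle interchange), and the two Lagrangian submanifolds --- one carried by the \emph{ovals} around the crossing regions, one carried by the \emph{arcs} --- together with the chosen paths lifting them into $\widehat C$. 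The quantity to be computed is the resulting $\mathbb{Z}[a^{\pm1},z^{\pm1}]$-valued intersection pairing.

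\textbf{Step 1: factorisation along a handle decomposition.} The heart of the argument is a Fubini-type formula for these graded intersections. Cutting $\Sigma(D)$ along the curves dual to a handle decomposition adapted to $D$ slices the diagram into elementary tangle pieces --- a single crossing of either sign, a local maximum or minimum, and the annular oval-carrying pieces --- and correspondingly presents $C$ as sub-configuration spaces glued along configuration spaces of lower strata. I would show that, near each slicing wall, the two global Lagrangians are products of boundary Lagrangians indexed by how many of the $m$ particles lie on each side, so that the pairing expands as a finite sum over such \emph{states} $s$ of products $\prod_v\langle\,\cdot\,\rangle_v(s)$ of local graded intersections, one factor per elementary piece $v$. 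The monomial weights multiply across pieces because the local system restricts compatibly to each region and the lifting paths are concatenations of local ones; this is the Heegaard-surface analogue of the gluing lemmas that cut configuration-space homology along braid generators in the plat picture, here adapted to work directly from the diagram.

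\textbf{Step 2: local computations and assembly.} Granted the factorisation, the identity $\Theta_H(D)=P_L(a,z)$ reduces to a finite list of model calculations on small punctured surface pieces. For each crossing sign I would compute the local pairing of the oval-Lagrangian with the arc-Lagrangian and identify the resulting matrix of intersection polynomials with the (inverse) $R$-matrix of the Hecke algebra $H_n(q)$, after the change of variables relating $(a,z)$ to the HOMFLY variables; the local extrema contribute the (co)evaluation morphisms and the oval pieces the quantum-trace (partial closure) factors, including the normalisation fixing the value on the unknot. Each calculation is a transverse intersection count on a disk or annulus with few punctures and particles, with the grading tracked by a single loop in $\widehat C$. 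Summing the local weights over all states then reproduces exactly the Reshetikhin--Turaev state sum for $U_q(\mathfrak{gl}_N)$ at generic $N$ (equivalently, the Hecke-algebra/Ocneanu-trace model of Jones) evaluated on $D$, which is known to equal $P_L(a,z)$; this gives the theorem. A more self-contained variant would instead use Step~2 together with homotopy invariance of graded intersections --- isotoping arcs and ovals on $\Sigma$ and performing handle slides --- to show directly that $\Theta_H(D)$ is a Reidemeister invariant satisfying $a\,\Theta_H(D_+)-a^{-1}\,\Theta_H(D_-)=z\,\Theta_H(D_0)$ and $\Theta_H(\mathrm{unknot})=1$, and then invoke uniqueness of the HOMFLY-PT polynomial.

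\textbf{Main obstacle.} I expect the main difficulty to be Step~1: proving that the global graded intersection is \emph{exactly} the finite state sum, with no residual global correction. This means checking that each transverse intersection point of the two lifted Lagrangians is detected by a unique state, that the local system and the chosen lifting paths glue so that the monomial weight of a point is precisely the product of its local weights, and that the handle decomposition of $\Sigma(D)$ can be chosen compatibly with all the crossings simultaneously, so that distinct crossings interact only through the shared particle count. A secondary but real difficulty is pinning down the change of variables and the overall normalisation so that the homological pairing lands on the HOMFLY-PT polynomial itself rather than an unknot-dependent rescaling of it.
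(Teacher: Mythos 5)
Your proposal follows a tangle-functor route --- cut $\Sigma(D)$ into elementary pieces, identify the local graded intersections with the Hecke-algebra $R$-matrix and (co)evaluation/trace morphisms, and invoke the Ocneanu-trace construction of $P_L$ --- and this is not only different from the paper's route but has a genuine gap at its core. The crucial Step 2, identifying a crossing-local intersection pairing with the \emph{two-variable} Hecke $R$-matrix, is essentially the open problem that motivates the paper: homological $R$-matrix models are known for one-variable specializations (Bigelow's $A_N$ models) but not for the full HOMFLY-PT polynomial, and nothing in your outline explains where the second variable would come from locally. In the actual construction the local system on $C_{2n}(\Sigma')$ records only windings around the $8n$ punctures (there is no particle-interchange variable that could produce a Hecke quadratic relation), and the variables $a,z$ enter only through a \emph{state-dependent} specialization $\alpha^{\sigma_H}_H$ whose Monodromy Requirement is formulated in terms of the global nesting pattern of the state circles of $\sigma_H$. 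This specialization, and hence the graded pairing itself, cannot be localized at a crossing, so the Fubini-type factorization of your Step 1 does not even have the right target: there is no single global intersection number to decompose, since Definition \ref{intHintr} is already a sum over states, each with its own pair of homology classes and its own coefficient specialization.

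The paper's proof is correspondingly different and more elementary. It first establishes a combinatorial state sum for $P_L(a,z)$ by running Hoste's descending-diagram algorithm (Lemma \ref{l.descend}) and converting each terminal descending diagram into a renormalized Kauffman state via Lemma \ref{renkaf}; this yields \eqref{e.homflyss}, in which each state $\sigma_H$ contributes $\sgn(\sigma_H)a^{i^a(\sigma_H)}z^{i^z(\sigma_H)}\left(\frac{a-a^{-1}}{z}\right)^{|\sigma_H|-1}$. The homological pairing is then only asked to reproduce the unlink evaluation: Lemma \ref{intgeneralH} computes $\ll \mathscr F^H(\sigma_H),\mathscr L^H(\sigma_H)\gg_{\alpha^{\sigma_H}_Q}=(1-Q)^{|\sigma_H|-1}$ circle by circle (two intersection points per ordinary state circle, only one for the circle carrying the cutting point), and setting $Q=1-\frac{a-a^{-1}}{z}$ gives Lemma \ref{intJ}. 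The monomial prefactors $a^{i^a(\sigma_H)}z^{i^z(\sigma_H)}$ are \emph{not} produced by the pairing at all; they are defined separately as geometric intersection numbers of twisted curves with the diagram (Lemma \ref{e.giHOMFLY}) and inserted into the state sum by hand. The one part of your plan worth keeping is the alternative of verifying the skein relation for $\Theta_H$ directly, but that would first require proving $\Theta_H$ is independent of the many auxiliary choices (cutting point, base points, ordering of components), which the paper sidesteps entirely by matching a known invariant term by term.
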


As far as the authors know, this is the first topological model for the HOMFLY-PT polynomial that does not rely on the specialization to the $A_N$ polynomials \footnote{The $A_N$ polynomials are also known as the $\mathfrak{sl}(N)$ polynomials \cite{khovanov-rozansky1}.} and is supported on a fixed configuration space.  In contrast to previous topological models including that of the first author's work \cite{Lawrence1993, Bigelow2002, Bigelow2007, Anghel2022Globalisation, Anghel2023QT, Anghel2023TAMS, Anghel2024Adv}, our topological model does not rely on a choice of a braid representative and the braid action on a punctured disk. Therefore, our construction is a new approach to creating topological models. 

With this new method, we construct as well a new topological model of the (un-normalized) Jones polynomial $J_L(q)$ that recovers the Kauffman bracket. 

\begin{thm}[New topological model for the Jones polynomial]
\label{t.main2}
\hspace{0pt}\\
Let $\Theta_J(D) \in \mathbb{Z}[q^{\pm 1}]$ be the state sum of graded intersections in the same configuration space on our fixed Heegaard surface, as in Definition \ref{intJintr}. Then it provides a topological model for the Jones polynomial, relying just on the choice of the diagram $D$:
\begin{equation*}
\Theta_J(D) = J_L(q). 
\end{equation*}
\end{thm}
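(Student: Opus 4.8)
The plan is to deduce Theorem~\ref{t.main2} from Theorem~\ref{t.main} by specialising the HOMFLY-PT topological model $\Theta_H$ to the Jones variable, rather than re-running an independent argument from scratch. Recall the classical fact that the HOMFLY-PT polynomial $P_L(a,z)$ degenerates to the (suitably normalised) Jones polynomial $J_L(q)$ under a substitution of the form $a \mapsto q^{-2}$, $z \mapsto q - q^{-1}$ (up to the precise convention fixed in the paper). The first step is to exhibit the corresponding degeneration at the geometric level: I would show that the Lagrangian submanifolds and the grading used in Definition~\ref{intJintr} are recovered from those in Definition~\ref{intHintr} through the $N=2$ collapse of the underlying oval and arc configurations, and that the gradings of the relevant coverings of the fixed configuration space on the Heegaard surface agree after this change of variables.

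Concretely, the key steps would be: (1) fix the substitution and check the normalisation, i.e.\ that both state sums take the agreed value on the $0$-crossing diagram of the unknot; (2) for a single crossing, expand the graded intersection between the local Lagrangian pieces and verify that, after the substitution, the HOMFLY-PT contribution collapses to the two Kauffman-bracket smoothings with the expected coefficients; (3) check that a configuration of disjoint ovals on the Heegaard surface, corresponding to a fully smoothed state of $D$ with $k$ loops, has graded self-intersection equal to the corresponding loop value raised to the power $k-1$ after the substitution; and (4) assemble these local computations over all crossings and states of $D$, together with the writhe/framing correction, to identify $\Theta_J(D)$ with $\Theta_H(D)$ evaluated at the Jones specialisation. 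Combining this with Theorem~\ref{t.main} and the classical identity that $P_L(a,z)$ becomes $J_L(q)$ under the substitution then yields $\Theta_J(D) = J_L(q)$. As an alternative, fully diagram-intrinsic route, one can instead verify directly that $\Theta_J(D)$ satisfies the Kauffman bracket skein relation and the unknot normalisation; this requires exactly the local crossing expansion of step (2) and the loop-value computation of step (3), with invariance under Reidemeister moves following from the skein characterisation, or from the invariance already established for $\Theta_H$.

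The step I expect to be the main obstacle is controlling this $N=2$ degeneration precisely on the Heegaard surface, that is, steps (2)--(3). In the algebraic picture the quantum state sum underlying HOMFLY-PT simplifies at $N=2$ because certain trivalent or doubled-edge configurations become redundant; geometrically this should manifest as certain ovals bounding discs, or certain intersection points of the Lagrangians cancelling in pairs, but making this cancellation manifest --- and, crucially, tracking how the $q$-grading coming from the covering behaves under it --- requires a careful local model near each crossing and near each oval. A secondary subtlety is the framing correction: one has to verify that the power of $q$ produced by the relative positions and framings of the Lagrangians near the crossings reproduces exactly the writhe-dependent normalising factor of the Jones polynomial, with no residual ambiguity.
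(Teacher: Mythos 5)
There is a genuine gap: the route you propose --- deducing Theorem \ref{t.main2} from Theorem \ref{t.main} by specialising $\Theta_H$ at $a=q^{-2}$, $z=q-q^{-1}$ --- does not connect to the object the theorem is actually about, and the identity ``$\Theta_J(D)=\Theta_H(D)$ at the Jones specialisation'' that your step (4) needs is false as stated. The two intersection forms are built from genuinely different data. First, they sum over different state sets: $\Theta_J$ sums over all $2^n$ Kauffman states, while $\Theta_H$ sums over ``renormalized Kauffman states'' $\sigma_H$, i.e.\ paths in the binary skein-resolution tree of a descending-diagram algorithm; there is no termwise matching between the two sums. Second, the homology classes differ: for $\Theta_H$ one oval (the one through the cutting base point) is shrunk, which changes the intersection count so that the pairing evaluates to $\bigl(\frac{a-a^{-1}}{z}\bigr)^{|\sigma_H|-1}$, whereas for $\Theta_J$ the pairing evaluates to $(q+q^{-1})^{|\sigma|}$ --- note the exponent $k$ versus $k-1$ in your step (3), which is exactly the discrepancy between the unreduced invariant computed by $\Theta_J$ and the reduced one computed by $\Theta_H$. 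The paper itself flags this: specialising the HOMFLY-PT model yields the \emph{normalized} Jones polynomial, not the unnormalized one that $\Theta_J$ equals. Finally, the ``$N=2$ collapse'' of trivalent/doubled-edge configurations you invoke has no counterpart in this construction: the HOMFLY-PT model here is not an $\mathfrak{sl}_N$ web state sum, so there is no geometric degeneration of that kind to control.

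The paper's actual proof is direct and runs in the opposite logical direction: the Jones computation is the foundational one. Lemma \ref{intgeneral} shows that for a generic indeterminate $Q$ and the specialisation $\alpha^{\sigma}_Q$ satisfying the Monodromy Requirement, each state circle contributes exactly two intersection points (one at the base points contributing $+1$, one ``opposite'' contributing $-Q$), so $\ll \mathscr F(\sigma), \mathscr L(\sigma) \gg_{\alpha^{\sigma}_Q}=(1-Q)^{|\sigma|}$; setting $Q=1-q-q^{-1}$ gives $(q+q^{-1})^{|\sigma|}$ (Lemma \ref{l.intJ}). Lemma \ref{l.intslope} then identifies the exponent $\frac{-\sgn(\sigma)+3w(D)}{2}$ with the geometric intersection number $-\frac{i(D_{\sigma}(\alpha),D)}{2}+w(D)$, and the theorem follows by comparison with the Kauffman bracket state sum \eqref{e.Jonesstate}. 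Your alternative, diagram-intrinsic route (verifying the skein relation and unknot normalisation for $\Theta_J$ directly) is also problematic here, because the skein relation compares diagrams with different crossing numbers and hence configuration spaces of different numbers of points, an issue your sketch does not address; the state-sum comparison avoids this entirely by working with a single fixed diagram.
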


 Theorem \ref{t.main2} provides the first topological model for the Jones polynomial from graded intersections in a configuration space on the Heegaard surface $\Sigma$.  Compared to the geometric models via Floer theory for Khovanov homology by Seidel-Smith \cite{seidel-smith} and Manolescu \cite{manolescu} in terms of nilpotent slices of certain Hilbert schemes, our topological model only depends on the link diagram and does not require a choice of braid representative.

We give a summary of the steps of the  construction of our topological model and the sections in which the steps are done in detail.

\subsection{Configuration space of a Heegaard surface (Section \ref{ss.Heegaardsurface}-\ref{ss.configspace})}
We start with a diagram $D$ for our link and consider a Heegaard surface $\Sigma$ associated to it, which is in the exterior of our link. Our main topological tools are built up using graded intersections between Lagrangian submanifolds in the configuration space on a punctured version of $\Sigma$. We show that we can obtain the Jones polynomial and HOMFLY-PT polynomial from an explicit set of graded Lagrangian intersections in a fixed configuration space on the surface. Let us make this precise.

Let $n$ be the number of crossings of the link diagram $D$. First we fix a set of $8$ punctures on $\Sigma$ associated to each crossing (as in Figure \ref{f.paths}) and an additional fixed point and consider $\Sigma'$ to be the resulting Heegaard surface with $8n+1$ punctures. We will be working in the configuration space of $2n$ particles on the punctured surface $\Sigma'$: 
$$C_{2n}(\Sigma') := Conf_{2n}(\Sigma').$$

\subsection{Generic local system and homology groups (Section \ref{ss.localsystem})} 
We define a local system on this configuration space:
\begin{equation}
\begin{aligned}
&\Phi: \pi_1(C_{2n}(\Sigma')) \rightarrow \mathbb Z^{8n}\\
&\hspace{28mm} \langle \{x_j\}_{j \in \{1,...,8n\}}\rangle. \ \end{aligned}
\end{equation}
After that, we consider the covering of the configuration space $C_{2n}(\Sigma')$ associated to this local system. Our tools are the middle dimension homology groups of this covering: the Borel-Moore homology $\mathscr H^{lf}_{2n}$ and homology $\mathscr H_{2n}$, which are $\Z[x_1^{\pm 1},...,x_{8n}^{\pm 1}]$-modules (as defined in Definition \ref{Hom}).
 These homologies are related by a Poincar{\'e}-type duality:  
$$\ll ~,~ \gg: \mathscr H^{lf}_{2n} \otimes \mathscr H_{2n} \rightarrow\Z[x_1^{\pm 1},...,x_{8n}^{\pm 1}] 
 \ \ \ (\text{Proposition } \ref{P:3'''}).$$
We use homology classes that are given by lifts of Lagrangian submanifolds from the base space. The recipe for constructing such submanifolds is to fix a collection of $2n$ arcs (or circles) on the punctured Heegaard surface $\Sigma'$ and consider their products and quotient to the unordered configuration space. We call such a set of curves the ``geometric support'' for our submanifolds. Then our pairing can be read off from the base space: it is parametrized by intersection points between the geometric supports and graded by the local system. 

Both topological models are constructed based on the same underlying topological data:
\begin{itemize}
    \item the configuration space of $2n$ particles on the fixed punctured Heegaard surface $\Sigma'$:  $C_{2n}(\Sigma')$  
    \item the intersection pairing between homologies of its fixed $\Z^{8n}$ covering space from above.
   \end{itemize}
The difference is reflected in the choice of homology classes and specialization of coefficients, as follows.

\subsection{Homology classes associated to a state}
\subsubsection{Classes for Jones polynomial: indexed by Kauffman states} For the Jones polynomial, our idea is to fix a resolution into state circles associated to a Kauffman state $\sigma$ and to ``push the set of state circles'' onto the surface $\Sigma'$. Then, we transform the state circles into two collections of arcs and ovals. 
We define two sets of curves on $\Sigma'$: $F(\sigma)$ (given by arcs between the punctures) and $L(\sigma)$ (given by ovals around punctures):
$$\mathscr F(\sigma)\in \mathscr H^{lf}_{2n} \ \ \ \text { and } \ \ \ \mathscr L(\sigma)\in \mathscr H_{2n} \ (\text{Figure } \ref{f.arcsovalso}).$$

\subsubsection{Classes for HOMFLY-PT polynomial: indexed by renormalized Kaufmann states}
For the HOMFLY-PT invariant there are two additional subtle ideas. On the combinatorial side, we develop a state sum formula using descending diagrams and renormalized Kauffman states $\sigma^K_H$. These states are associated to Jaeger states $\sigma_H$ in the Jaeger state sum formula (for simplicity of notation we use $\sigma_H = \sigma_{H, P}$ where $\sigma_{H, P}$ is the notation for a Jaeger state in Section \ref{S:H}). 

The second idea is to encode geometrically a renormalization procedure that appears in the state sum formula by cutting our geometric support for the homology classes. We do this by fixing a cutting point on the diagram and once we consider a state, we define the set of arcs and ovals by the same technique as for the Jones polynomial, but we shrink the oval associated to this cutting point. For this model we fix a point on our diagram and call it a ``cutting point.'' Then, once we have a renormalized Kauffman state $\sigma^K_H$, we associate another two sets of curves $F(\sigma^K_H)$ (given by arcs between the punctures) and $L(\sigma^K_H)$ (given by ovals around punctures where we shrink the oval around the cutting point), as in Figure \ref{f.arcsovalsoH}. They lead to the classes:
$$\mathscr F^H(\sigma^K_H)\in \mathscr H^{lf}_{2n} \ \ \ \text { and } \ \ \ \mathscr L^H(\sigma^K_H)\in \mathscr H_{2n} \ (\text{Figure } \ref{f.arcsovalsoH}).$$

\subsection{Specialization associated to a state}

We remark that the above homology classes depend on the choice of a (renormalized) Kauffman state, but they belong to the generic homology groups of the covering, which are intrinsic and do not depend on the state. Now, we define a specialization of coefficients associated to a fixed state, as in Definition \ref{specQ}.
\begin{defn}[Change of coefficients for a state]
\hspace{0pt}\\
Consider $\alpha^{\sigma}_Q: \Z[x_1^{\pm 1},\ldots,x_{8n}^{\pm 1}] \rightarrow \Z[Q^{\pm1}]$, which is constructed in a geometric manner such that it satisfies a certain {\em Monodromy Requirement} related to evaluations around the punctures from the surface (Definition \ref{reqspec}).
\end{defn}

Both models are then given by the generic intersection pairing $\ll ~,~ \gg$, between the classes associated to the state $\sigma$ or $\sigma_H^K$, then specialized through $\alpha^{\sigma}_Q$ or $\alpha^{\sigma_H^K}_Q$, for two different choices of $Q$. 
\begin{defn}[Specialization of coefficients for the Jones polynomial]  For a particular choice of $Q=Q_J$, we have the induced specialization $\alpha^{\sigma}_J: \Z[x_1^{\pm 1},...,x_{8n}^{\pm 1}] \rightarrow \Z[q^{\pm1}](1-q-q^{-1})$ and the specialized intersection pairing as below (see Definition \ref{specJ}):
 \begin{equation*}
\ll ~,~ \gg_{\alpha^{\sigma}_J}: \mathscr H^{lf}_{2n}\mid_{J} \otimes \mathscr H_{2n}\mid_{J} \rightarrow \Z[q^{\pm 1}](1-q-q^{-1}).
\end{equation*}
\end{defn}

\begin{defn}[Specialization for the HOMFLY-PT polynomial] 
For the case of the HOMFLY-PT polynomial we will use the change of coefficients $\alpha^{\sigma^K_H}_H:  \Z[x_1^{\pm 1},...,x_{8n}^{\pm 1}]  \rightarrow \Z[a^{\pm1},z^{\pm1}](1-\frac{a-a^{-1}}{z})$ and the specialized groups (see Definition \ref{specH}):
 \begin{equation*}
\ll ~,~ \gg_{\alpha^{\sigma^K_H}_H}: \mathscr H^{lf}_{2n}\mid_{H} \otimes \mathscr H_{2n}\mid_{H} \rightarrow\Z[a^{\pm1},z^{\pm1}](1-\frac{a-a^{-1}}{z}).
\end{equation*}
\end{defn}

\subsection{Topological formula for the Jones polynomial}
Our topological model $\Theta_J$  interprets the Kauffman state sum definition of the Jones polynomial in terms of graded intersections $\ll ~,~ \gg_{\alpha^{\sigma}_J}$ between homology classes $\mathscr F(\sigma)$ and $\mathscr L(\sigma)$ over all choices of Kauffman states $\sigma$, and linking numbers  between certain curves $D_\sigma(\alpha)$ on the Heegaard surface $\Sigma'$ and $D$ ($i(D_\sigma(\alpha), D)$), as follows.
\begin{defn}[Intersection form for the Jones polynomial]\label{intJintr}
Define $\Theta_J(D)$ as follows (see Definition \ref{intformulaJ}):  
 \begin{align*}   
    \Theta_J(D)(q):=  \sum_{\sigma \text{ a Kauffman state}} &(-1)^{3w(D)} \cdot \\  
    & \cdot (-q)^{\frac{-i(D_\sigma(\alpha), D)+3w(D)}{2}} \ll \mathscr F(\sigma) , \mathscr L(\sigma)  \gg_{\alpha^{\sigma}_{J}}.
    \end{align*}
\end{defn}

Theorem \ref{t.main} shows that this is a topological model for the Jones polynomial.
\subsection{Topological formula for the HOMFLY-PT polynomial}
Our topological model defines the HOMFLY-PT polynomial via a state sum of intersection pairings between classes $\mathscr F(\sigma^K_H)$ and $\mathscr L(\sigma^K_H)$ indexed by {\em \bf renormalized Kauffman states} $\sigma^K_H$. These states correspond to {\em \bf Jaeger states} $\sigma_H$ (Lemma \ref{renkaf}) in the state sum formulation of the HOMFLY-PT polynomial due to Jaeger \cite{Jaeger}.  
\begin{defn} [Intersection form for the HOMFLY-PT polynomial]\label{intHintr}
Consider the function $\Theta_H(D)$ defined as follows (see Definition \ref{intformulaH}): 
\begin{align*}
\Theta_H(D)(a,z) &:= \sum_{\substack{\sigma^K_H \text{ a renormalized} \\ \text{Kauffman state} \\ \text{associated to a} \\ \text{ Jaeger state $\sigma_H$}}} \sgn(\sigma^K_H) \cdot a^{i^a(\sigma^K_H)} \cdot z^{i^z(\sigma^K_H)}  \cdot \\
& \hspace{3.2cm} \cdot \ll \mathscr F^H(\sigma^K_H), \mathscr L^H(\sigma^K_H) \gg_{\alpha^{\sigma^K_H}_{H}}.
\end{align*}
\end{defn} 
By Theorem \ref{t.main}, $\Theta_H(D)(a,z)$ provides a topological model for the HOMFLY-PT polynomial of $L$. \begin{rmk}[Geometric meaning of renormalization and Jaegaer's coefficients] An extra subtlety in the construction comes from modifying the homology classes by ``cutting the strand", which encodes the renormalization procedure that appears in the HOMFLY-PT invariant. 

Secondly, we  provide a new geometric interpretation (Lemma \ref{e.giHOMFLY}) of Jaeger's coefficient polynomial $ \sgn(\sigma^K_H) \cdot a^{i^a(\sigma^K_H)} \cdot z^{i^z(\sigma^K_H)}$ from \cite{Jaeger} that appears for each state $\sigma_H$. We do this in terms of linking numbers of certain twisted curves on the punctured Heegaard surface $\Sigma'$, see Definition \ref{dtwist}. The method is inspired by the relationship between the Jones polynomial and boundary slopes of essential surfaces \cite{FKP-adequate} (Lemma \ref{l.intslope} and Remark \ref{r.tbdslope}). 
\end{rmk}

\begin{rmk}[Geometric perspective for the HOMFLY-PT polynomial]
This shows that the HOMFLY-PT polynomial of a link is given by the set of graded intersections between the homology classes associated to the set of states:
\begin{align*}
P_L(a,z) \  \longleftrightarrow \ & \{ \text{graded intersections } \ll \mathscr F^H(\sigma^K_H), \mathscr L^H(\sigma^K_H) \gg\\
& \text{   in the configuration space of } 2n \text{ points}\\
& \text{   on the punctured Heegaard surface } \Sigma' \ \}_{\sigma^K_H \text{ state}. }
\end{align*}

\end{rmk}

\begin{rmk}[Model for Jones polynomial vs. HOMFLY-PT polynomial]

Our first topological model $\Theta_J(D)(q)$ leads to the Jones polynomial and the second model $\Theta_H(D)(a,z)$ gives the HOMFLY-PT polynomial. This, in turn, leads to the Alexander polynomial and normalized Jones polynomial through variable specializations. Comparing the two models for the case of the Jones polynomial, we see that in one case we obtain the un-normalized version (where we do not cut the strand) and in the other we obtain the normalized version, where we cut the strand. 
\end{rmk}
\subsection{Connection to representation theory}

Philosophically we expect topological models to have a deeper meaning coming from representation theory.  The fact that the HOMFLY-PT invariant contains the Alexander polynomial as a specialization means, from a quantum point of view, that it contains a non-semisimple invariant. Geometrically this non-semisimplicity is encoded in the procedure of cutting of the strand and shrinking one oval from our geometric support. 

It would be interesting to study this phenomenon further to clarify the connections between our topological model for the HOMFLY-PT polynomial and the representation-theoretic information encoded by this invariant.

\subsection{Connection to Floer-theoretic invariants and categorifications}
From the topological model of the HOMFLY-PT polynomial, we can obtain a topological model of the Alexander polynomial by specializing the variables $a \mapsto 1$ and $z \mapsto t^{1/2} - t^{-1/2}$.  Since the definition of the Heegaard surface $\Sigma$ used in our construction closely follows \cite{OS}, where Ozsv{\'a}th-Szab{\'o} determine the knot Floer homology of alternating knots with their setup, it is natural to ask about the relationship between two topological models for the Alexander polynomial: our model, and the one from knot Floer homology. We plan to investigate this question in the future. 

\vspace{-2mm}
\section*{Acknowledgements}
We would like to thank Jake Rasmussen, Emmanuel Wagner, Mikhail Khovanov, Matt Hedden, Ciprian Manolescu and Roland van der Veen for their comments and suggestions on the drafts of this paper. The first author acknoledges the support offered by the EPSRC Programme Grant EP/W007509/1. She also acknowledges the support of the ANR grant ANR-24-CPJ1-0026-01 at Universit\'e Clermont Auvergne - LMBP and partial support by grants of the Ministry of Research, Innovation and Digitization, CNCS - UEFISCDI, project numbers  PN-IV-P2-2.1-TE-2023-2040 and PN-IV-P1-PCE-2023-2001, within PNCDI IV.
The second author acknowledges the support from NSF Grant No. 2244923 and the University of Leeds for providing excellent working conditions. 
\tableofcontents

\section{A local system} \label{s.localsystem}

\subsection{Punctures and base points on a Heegaard surface of a link diagram} \label{ss.Heegaardsurface}

As defined in \cite{Moriah}, a \textit{compression body} is a 3-manifold $V$ obtained from a surface $\Sigma$ cross an interval $[0, 1]$ by attaching a finite number of 2-handles and 3-handles to $\Sigma \times \{0 \}$. The component $\Sigma \times \{1\}$ of the boundary is denoted by $\partial_+V$, and $\partial V \setminus \partial_+V$ is denoted by $\partial_-V$. 

Let $L$ be a link in $S^3$. A (generalized) \textit{Heegaard splitting} for a link exterior $E(L)$ is a decomposition $E(L) = V \cup_\Sigma W$, where $V$ is a compression body with $\partial_+V \simeq \Sigma$ and $\partial_-V = \partial N(L)$, and $W$ is a handlebody. The surface $\Sigma$ is called a \textit{Heegaard surface} for the link $L$. 

Let $D$ be a diagram of a link $L$. We follow Osz\'ath-Szab\'o \cite{OS} in the construction of a Heegaard surface $\Sigma$ of the link $L$ from $D$.
To each crossing $\chi$ of the diagram we associate a $4$-punctured sphere as a local piece of the Heegaard surface:  
\begin{figure}[H]
\begin{center}
\def \svgwidth{.7\columnwidth}
\begingroup%
  \makeatletter%
  \providecommand\color[2][]{%
    \errmessage{(Inkscape) Color is used for the text in Inkscape, but the package 'color.sty' is not loaded}%
    \renewcommand\color[2][]{}%
  }%
  \providecommand\transparent[1]{%
    \errmessage{(Inkscape) Transparency is used (non-zero) for the text in Inkscape, but the package 'transparent.sty' is not loaded}%
    \renewcommand\transparent[1]{}%
  }%
  \providecommand\rotatebox[2]{#2}%
  \newcommand*\fsize{\dimexpr\f@size pt\relax}%
  \newcommand*\lineheight[1]{\fontsize{\fsize}{#1\fsize}\selectfont}%
  \ifx\svgwidth\undefined%
    \setlength{\unitlength}{694.01629591bp}%
    \ifx\svgscale\undefined%
      \relax%
    \else%
      \setlength{\unitlength}{\unitlength * \real{\svgscale}}%
    \fi%
  \else%
    \setlength{\unitlength}{\svgwidth}%
  \fi%
  \global\let\svgwidth\undefined%
  \global\let\svgscale\undefined%
  \makeatother%
  \begin{picture}(1,0.22173506)%
    \lineheight{1}%
    \setlength\tabcolsep{0pt}%
    \put(0,0){\includegraphics[width=\unitlength,page=1]{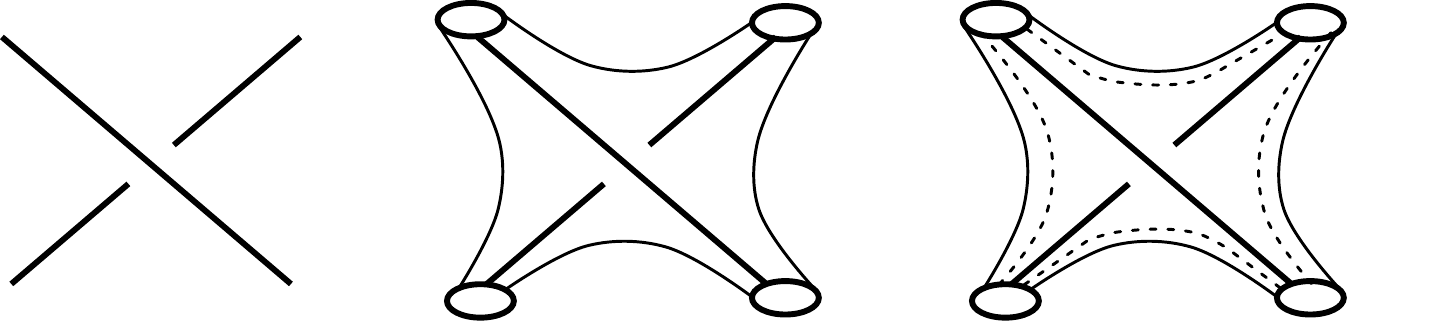}}%
    \put(0.78182503,0.19060168){\color[rgb]{0,0,0}\makebox(0,0)[lt]{\lineheight{1.25}\smash{\begin{tabular}[t]{l}$\alpha_1$\end{tabular}}}}%
    \put(0.90214916,0.10707468){\color[rgb]{0,0,0}\makebox(0,0)[lt]{\lineheight{1.25}\smash{\begin{tabular}[t]{l}$\alpha_2$\end{tabular}}}}%
    \put(0.78118254,0.01291594){\color[rgb]{0,0,0}\makebox(0,0)[lt]{\lineheight{1.25}\smash{\begin{tabular}[t]{l}$\alpha_3$\end{tabular}}}}%
    \put(0.6578783,0.10351061){\color[rgb]{0,0,0}\makebox(0,0)[lt]{\lineheight{1.25}\smash{\begin{tabular}[t]{l}$\alpha_4$\end{tabular}}}}%
  \end{picture}%
\endgroup%

\end{center}
\caption{\label{f.curves} Left to right: a crossing, a 4-punctured sphere at a crossing, $\alpha$ curves.}
\end{figure} 
To a crossing $\chi$ we associate four $\alpha$ arcs $\alpha_1(\chi), \alpha_2(\chi), \alpha_3(\chi), \alpha_4(\chi)$, corresponding to each quadrant of the crossing as also shown in Figure \ref{f.curves}. Formally, each boundary circle is transverse to the projection plane and centered on the knot strand.

The surface $\Sigma$ is built by identifying every pair of boundary circles on two 4-punctured spheres sharing the same knot strand, so that the $\alpha$ arcs are extended through each 4-punctured sphere, see Figure \ref{f.glue}.

\begin{figure}[H]
\begin{center}
\def \svgwidth{.5\columnwidth}
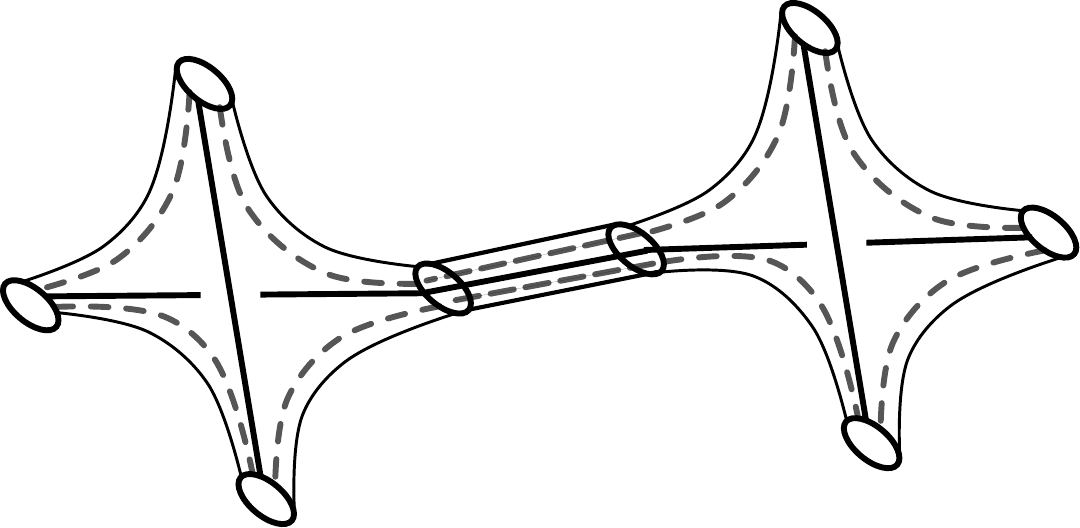
\end{center}
\caption{\label{f.glue} Gluing pieces of the Heegaard surface $\Sigma$ along a knot strand.}
\end{figure}

We see the surface $\Sigma$ is a Heegaard surface for $E(L)$  by  thickening the surface to $\Sigma\times [0, 1]$, so that $\Sigma \times \{0\}$ contains the link $L$ in its interior. At each crossing, add $2$-handles to $\Sigma \times \{0\}$ to fill out the space between two strands of a crossing and $\Sigma \times \{0\}$.  Finally, cap off the resulting sphere components with 3-handles. 

By definition, the space $\Sigma \times [0, 1]$ with the 2-and 3-handles attached is a compression body $V$ with $\partial_-V = \partial N(L)$. The complement of $V$ in $S^3$ is bound by $\Sigma$ and therefore a handlebody, and $V \cup_{\Sigma} (S^3\setminus V)$ defines a Heegaard splitting of $E(L)$. 

\subsection{Intersection points} \label{ss.intersectionpts}

For a link $L \subset S^3$, we will define the intersection pairing as arising from certain intersection points on the Heegaard surface $\Sigma$ associated to $L$ discussed in the previous section.

Isotope the link diagram so that 
\begin{itemize}
\item every crossing is oriented with the North, South, East, and West quadrant between a pair of strands as on the left in Figure \ref{f.intersectionpts}, and 
\item the link diagram lies on the surface, as in the following figure. 
\begin{figure}[H]
\begin{center}
\def \svgwidth{.5\columnwidth}
\begingroup%
  \makeatletter%
  \providecommand\color[2][]{%
    \errmessage{(Inkscape) Color is used for the text in Inkscape, but the package 'color.sty' is not loaded}%
    \renewcommand\color[2][]{}%
  }%
  \providecommand\transparent[1]{%
    \errmessage{(Inkscape) Transparency is used (non-zero) for the text in Inkscape, but the package 'transparent.sty' is not loaded}%
    \renewcommand\transparent[1]{}%
  }%
  \providecommand\rotatebox[2]{#2}%
  \newcommand*\fsize{\dimexpr\f@size pt\relax}%
  \newcommand*\lineheight[1]{\fontsize{\fsize}{#1\fsize}\selectfont}%
  \ifx\svgwidth\undefined%
    \setlength{\unitlength}{468.69808167bp}%
    \ifx\svgscale\undefined%
      \relax%
    \else%
      \setlength{\unitlength}{\unitlength * \real{\svgscale}}%
    \fi%
  \else%
    \setlength{\unitlength}{\svgwidth}%
  \fi%
  \global\let\svgwidth\undefined%
  \global\let\svgscale\undefined%
  \makeatother%
  \begin{picture}(1,0.32833025)%
    \lineheight{1}%
    \setlength\tabcolsep{0pt}%
    \put(0,0){\includegraphics[width=\unitlength,page=1]{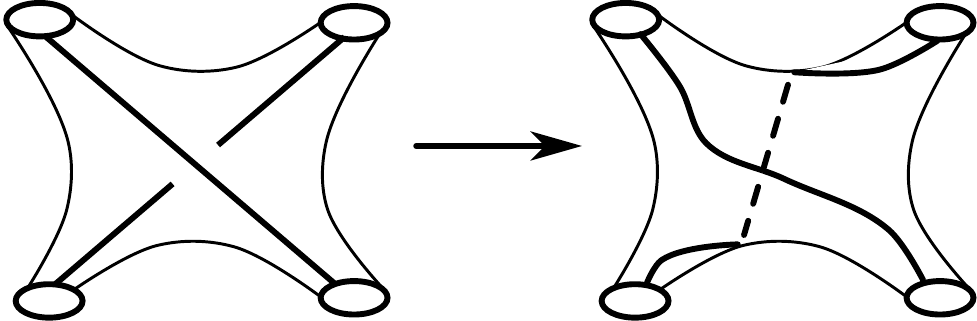}}%
    \put(0.42490404,0.10214688){\color[rgb]{0,0,0}\makebox(0,0)[lt]{\lineheight{1.25}\smash{\begin{tabular}[t]{l}Isotopy\end{tabular}}}}%
    \put(0.17213781,0.28490958){\color[rgb]{0,0,0}\makebox(0,0)[lt]{\lineheight{1.25}\smash{\begin{tabular}[t]{l}$N$\end{tabular}}}}%
    \put(0.17348326,0.02137959){\color[rgb]{0,0,0}\makebox(0,0)[lt]{\lineheight{1.25}\smash{\begin{tabular}[t]{l}$S$\end{tabular}}}}%
    \put(0.34435833,0.14938231){\color[rgb]{0,0,0}\makebox(0,0)[lt]{\lineheight{1.25}\smash{\begin{tabular}[t]{l}$E$\end{tabular}}}}%
    \put(-0.00193771,0.14938231){\color[rgb]{0,0,0}\makebox(0,0)[lt]{\lineheight{1.25}\smash{\begin{tabular}[t]{l}$W$\end{tabular}}}}%
  \end{picture}%
\endgroup%

\end{center}
\caption{ \label{f.disotope} At a crossing, push the overstrand to the front of the surface on the page, and push the understrand to the back of the surface into the page. }
\end{figure}
\end{itemize}

With the understanding that the link diagram $D$ is isotoped to lie on the surface $\Sigma$, first we define four points $x_l, x_r, y_l, y_r$ on the surface at each crossing of $D$ as indicated in the local picture, with $x_l, y_r$ on the over arc and $x_r, y_l$ on the under arc. Then, we define $\alpha'$ curves (deformed from $\alpha$ arcs) that join $x_l$ to $x_r$, $x_r$ to $y_r$, $y_l$ to $y_r$, and lastly $x_l$ to $y_l$ and color them blue as shown in Figure \ref{f.intersectionpts}. 
\begin{figure}[H]
    \centering
    \def \svgwidth{.7\columnwidth}
    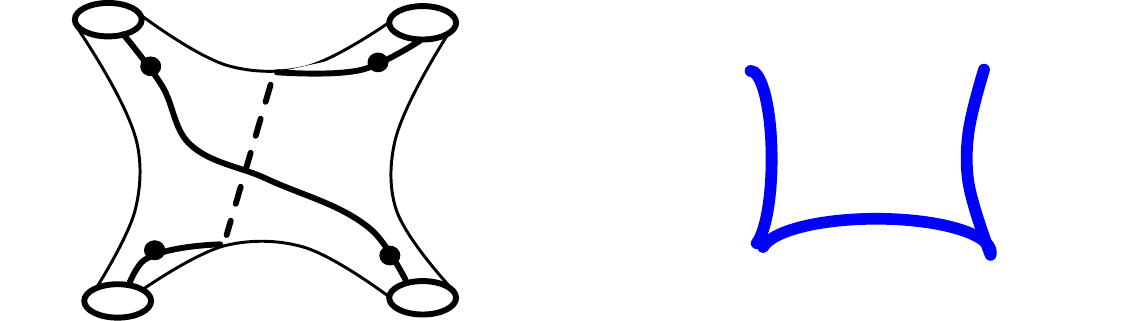
    \caption{Left: four intersection points on $\Sigma$. Right: blue colorings of the $\alpha'$ curves from resolutions. A dashed arc indicates the under arc.}
    \label{f.intersectionpts}
\end{figure}

Let $n$ be the number of crossings in $D$.
We mark additional points that avoid the $\alpha'$ curves and denote them by:
\begin{align}
\begin{cases} 
p_l \text{ and } \bar{p}_l, \\
p_r \text{ and } \bar{p}_r,\\
p'_l \text{ and } \bar{p}'_l,\\
p'_r \text{ and } \bar{p}'_r, \text{ for } i \in \{1,...,n\} \text{ as in Figure } \ref{f.paths}.
\end{cases}
\end{align}

\begin{figure}[H]
\centering
\def \svgwidth{.5\columnwidth}
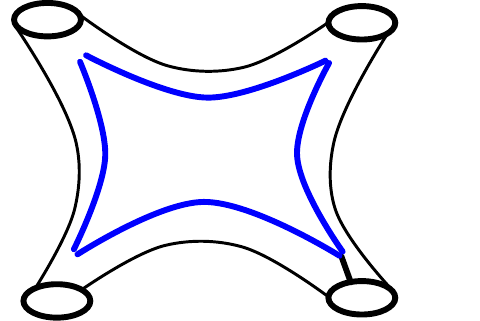
 \caption{\label{f.paths} Additional purple punctures placed with blue arcs.}
\end{figure}
We choose an extra fixed point denoted by $s$ on our surface that does not coincide with the above points. 

\subsection{Configuration space} \label{ss.configspace}
We will be working in the configuration space of $2n$ particles on the surface 
$\Sigma'$ obtained from $\Sigma$ by removing the special puncture $s$ and the sets of punctures and marked points:
$$\{p_l,\bar{p}_l,p_r,\bar{p}_r,p'_l,\bar{p}'_l,p'_r,\bar{p}'_r\} \text{ and }\{ x_l, x_r, y_l, y_r\}$$ that are associated to all the crossings.

Next, for the grading procedure and the definition of the intersection pairing, we define a local system on this configuration space. We designate one puncture for each arc of an $\alpha'$ curve at a crossing. We denote these points as in Figure \ref{f.intersectionpts}:
\begin{equation}
\{ x_l, y_r\}.
\end{equation}

For a computational purpose, we choose two base points near the punctures $x_l, x_r$ and we denote these points as: 
\begin{equation}
\{ b_l, b_r\}.
\end{equation}
The basepoints $\{ b_l, b_r\}$ will be chosen using an alternating link projection $D'$ constructioned from the original diagram $D$. 

Consider the 4-valent graph obtained by collapsing the double points of every crossing in the link diagram to a single point as in Figure \ref{f.collapse}.
 \begin{figure}[H]
 \def \svgwidth{.5\columnwidth}
 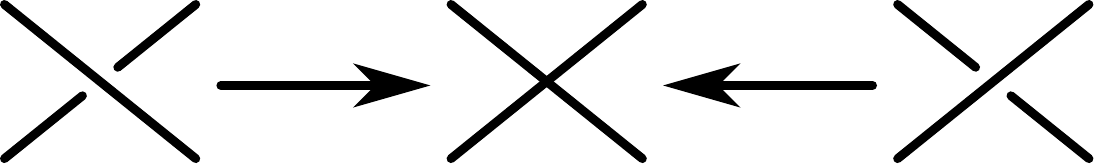

 \caption{\label{f.collapse} Collapse the double points of each crossing of $D$ to a single point.}
 \end{figure}
 By \cite[Theorem 2.7]{ADFK}, the graph has an alternating link projection, say $D'$ through crossings changes of the original diagram $D$. Embed $ D'$ onto the surface $\Sigma$ as in Figure \ref{f.disotope}. Next we  use $D'$ to choose punctures and base points on $\Sigma$.  

\begin{defn}(Choice of base points) \label{d.basepoints}
  For each crossing in $D'$, choose a pair of base points $\{b_l, b_r\}$ on the over arc of $D'$ avoiding $x_l, x_r$, the $\alpha'$ arcs and all the rest of the punctures as shown in Figure \ref{f.pbpts}. 
\begin{figure}[H]
\def \svgwdith{.1\columnwidth}
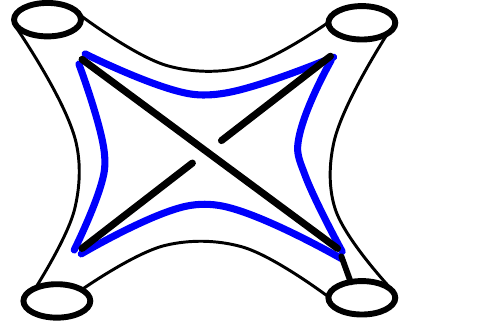
\caption{\label{f.pbpts} Choice of base points at a 4-punctured sphere.}
\end{figure}

\end{defn} 
Recall we color  the $\alpha'$ arcs at a crossing blue. Extensions of $\alpha'$ arcs between crossings will be colored black.

\begin{defn} All of this discussion is for a fixed crossing. We have $n$ crossings in our diagram, denoted $c_1,\ldots ,c_n$ and for crossing $c_i$ we add $i$ as an index to the chosen points, as below:

\begin{align}
c_i: \ \ \  
& \text{Marked points} \ \  \{ x^i_l, x^i_r, y^i_l, y^i_r\}\\
& \text{Punctures} \ \ \{p^i_l,\bar{p}^i_l,p^i_r,\bar{p}^i_r,(p')^i_l,(\bar{p}')^i_l,(p')^i_r,(\bar{p}')^i_r\}\\
& \text{Base points} \ \ \{ b^i_l, b^i_r\}
\end{align}  
\end{defn}
\begin{defn}[Base points and intersection points]
Let us define the set of all base points, associated as above to the set of crossings by:
$$\{ b_1, \ldots ,b_{2n} \}.$$
Also we consider the set of all punctures on the surface and denote it as below:
$$\{ p_1,\ldots,p_{8n},x_1,\ldots,x_{4n} \}.$$
This means that our punctured surface $\Sigma'$ is: \
 $$\Sigma'=\Sigma \setminus \{ p_1,\ldots,p_{8n},x_1,\ldots,x_{4n},s\}.$$
\end{defn}
Next, we define a local system on a specific configuration space on this punctured surface $\Sigma'$. For this, we will use a collection of loops that are based at $\{ b_1, \ldots ,b_{2n} \}$ and go around the punctures $\{ p_1,\ldots ,p_{8n} \}$, as follows.
\begin{defn}[Loops around punctures]
For a fixed $i \in \{1, \ldots ,n\}$ we look at the crossing $c_i$ and  consider a collection of eight curves, each of which goes around a puncture, distributed as below:
\begin{itemize}
    \item $4$ loops based in $b^i_l$ which go around each of the punctures 
$p^i_l$, $\bar{p}^i_l$, $(p')^i_l$ and $(\bar{p}')^i_l$
\item $4$ loops based in $b^i_r$ which go around each of  the punctures $p^i_r$, $(\bar{p})^i_r$, $(p')^i_r$ and $(\bar{p}')^i_r$, as in the last figure on the right of Figure \ref{f.looppaths}.
\end{itemize} 
\begin{figure}[H]
\def \svgwidth{.5\columnwidth}
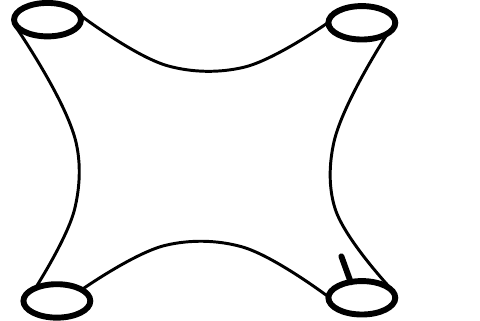
\caption{Loops $\{ \gamma^i_1, \ldots,\gamma^i_8\}$. }
\label{f.looppaths}
\end{figure}
We denote this set of curves by $\{ \gamma^i_1, \ldots,\gamma^i_8\}$.
\end{defn}
\begin{defn}[Set of loops]\label{setlop}
Further, we denote the set of all loops around punctures from above as:
$$\{ \gamma_1,\ldots,\gamma_{8n} \}.$$
For $j \in \{1,\ldots,2n\}$  define $\tau(j)$ to be the index of the base point within $\{b_1, b_2, \ldots, b_{2n}\}$ that belongs to $\gamma_j$:
\begin{equation}
    b_{\tau(j)}\in \gamma_j.
\end{equation}
\end{defn}
\subsection{
Local system} \label{ss.localsystem}

In this part we introduce a local system on our surface with punctures $\Sigma'$.
Let $C_{2n}(\Sigma')$ be the unordered configuration space of $2n$ points on $\Sigma'$. More specifically, this is given by:
$$C_{2n}(\Sigma'):=\{ (z_1,\ldots,z_{2n})\in (\Sigma')^{\times 2n} \mid z_i \neq z_j, \forall \ 1 \leq i < j\leq 2n\} /Sym_{2n} $$
where $Sym_{2n}$ is the symmetric group of order $2n$.

We fix a base point in the configuration space $C_{2n}(\Sigma')$, given by the collection of base points associated to all the crossings, denoted by:
\begin{equation}
{\bf d}:=(b_1,\ldots,b_{2n}) \in C_{2n}(\Sigma').
\end{equation}
\begin{defn}(Local system on $C_{2n}(\Sigma')$) Let us start with the abelianization map by $$ab: \pi_1(C_{2n}(\Sigma')) \rightarrow H_1(C_{2n}(\Sigma')).$$
\end{defn}
We remark that in this homology group we have $8n$ special homology classes, given by loops in the configuration space where we keep all particles fixed except one, which moves on a loop $\gamma_i(t)$. 

More precisely, consider the set of loops from below.
\begin{defn}[Loops in the configuration space] For a index $j\in \{1,\dots, 8n\}$ let us consider the following loops in $C_{2n}(\Sigma')$: 
$$\gamma'_i(t):=\{ \left(b_1,b_2,\dots,b_{\tau(j)-1},\gamma_j(t),b_{\tau(j)+1},\dots,b_{2n}\right) \}, \ t \in [0,1].$$
\end{defn}

\begin{lem}\label{iso}
    The homology classes of these curves $$\{[\gamma'_1],\dots, [\gamma'_{8n}]\}$$ form a linearly independent set in the homology of the configuration spaces $H_1(C_{2n}(\Sigma'))$.
\end{lem}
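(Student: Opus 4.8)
The strategy is to exhibit $8n$ homomorphisms out of $H_1(C_{2n}(\Sigma'))$ that restrict to the identity $8n\times 8n$ matrix on the family $\{[\gamma'_1],\dots,[\gamma'_{8n}]\}$, namely the ``total winding number'' maps around the $8n$ punctures $p_1,\dots,p_{8n}$ that the loops $\gamma_1,\dots,\gamma_{8n}$ encircle. First I would record the purely surface-level fact that $\{[\gamma_1],\dots,[\gamma_{8n}]\}$ is linearly independent in $H_1(\Sigma';\mathbb R)$. Indeed $\Sigma'$ is the closed orientable surface $\Sigma$ with the $12n+1$ points $p_1,\dots,p_{8n},x_1,\dots,x_{4n},s$ removed (working on each component of $\Sigma$ separately if $\Sigma$ is disconnected), so $H_1(\Sigma')$ is free and the meridians around all $12n+1$ punctures generate a subgroup isomorphic to $\mathbb Z^{12n+1}/\langle(1,\dots,1)\rangle$, their only relation being that their sum vanishes; since $8n<12n+1$, any proper sub-collection of these meridians is linearly independent, and in particular so is $\{[\gamma_1],\dots,[\gamma_{8n}]\}$. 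Hence there exist closed $1$-forms $\omega_1,\dots,\omega_{8n}$ on $\Sigma'$ with $\int_{\gamma_k}\omega_j=\delta_{jk}$ (extend $\{[\gamma_j]\}_j$ to a basis of $H_1(\Sigma';\mathbb R)$ and take the corresponding dual functionals, realized by closed forms via de Rham).

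Next I would define, for each $j$, a homomorphism $w_j\colon\pi_1(C_{2n}(\Sigma'),\mathbf d)\to\mathbb R$ as follows. A based loop $\ell$ lifts uniquely, through the covering $\mathrm{Conf}_{2n}(\Sigma')\to C_{2n}(\Sigma')$, to a path starting at the ordered tuple $(b_1,\dots,b_{2n})$; its coordinate paths $\tilde z_1,\dots,\tilde z_{2n}$ in $\Sigma'$ end at a permutation of $(b_1,\dots,b_{2n})$, and I set $w_j([\ell]):=\sum_{i=1}^{2n}\int_{\tilde z_i}\omega_j$. This is well defined: because $\omega_j$ is closed, each $\int_{\tilde z_i}\omega_j$ is invariant under a based homotopy of $\ell$ (which lifts to a homotopy of the tuple of coordinate paths rel endpoints); and under concatenation $\ell_1\ast\ell_2$ the coordinate paths of $\ell_2$ get reindexed by the terminal permutation of $\ell_1$, so summing over all $2n$ strands yields $w_j(\ell_1\ast\ell_2)=w_j(\ell_1)+w_j(\ell_2)$. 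Since $\mathbb R$ is abelian, $w_j$ factors through the abelianization, giving $\bar w_j\colon H_1(C_{2n}(\Sigma'))\to\mathbb R$.

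Finally I would evaluate $\bar w_j$ on the generators: by construction $\gamma'_k$ moves only the particle at $b_{\tau(k)}$, sending it once around $\gamma_k$ while fixing all others, so for this loop exactly one coordinate path is $\gamma_k$ and the rest are constant; therefore $\bar w_j([\gamma'_k])=\int_{\gamma_k}\omega_j=\delta_{jk}$. Consequently, if $\sum_{k=1}^{8n}a_k[\gamma'_k]=0$ in $H_1(C_{2n}(\Sigma'))$ with $a_k\in\mathbb Z$, then applying $\bar w_j$ gives $a_j=0$ for every $j$, which is the asserted linear independence.

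I expect the only part carrying genuine content to be the well-definedness of the maps $w_j$ — that summing a path integral of a closed (but not exact) form over all, now unordered, strands produces a quantity that is simultaneously homotopy-invariant and additive. A slicker but less self-contained alternative is to use the ``total cycle'' map $H_1(C_{2n}(\Sigma'))\to H_1(\mathrm{SP}^{2n}\Sigma')\cong H_1(\Sigma')$ (the last isomorphism by Dold--Thom, valid since $2n\geq 1$), under which $[\gamma'_k]\mapsto[\gamma_k]$; linear independence of $\{[\gamma_k]\}$ in $H_1(\Sigma')$ then concludes immediately.
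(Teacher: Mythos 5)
Your argument is correct and complete. Note that the paper states Lemma \ref{iso} without any proof, so there is nothing to compare against line by line; your write-up supplies the standard justification. Both of your routes work: the ``total winding number'' homomorphisms $\bar w_j\colon H_1(C_{2n}(\Sigma'))\to\mathbb R$ built from closed $1$-forms dual to the meridians, and the shortcut through $H_1(\mathrm{SP}^{2n}\Sigma')\cong H_1(\Sigma')$, both reduce the claim to linear independence of the $8n$ meridian classes in $H_1(\Sigma')$, which holds because they form a proper subcollection of the $12n+1$ puncture meridians whose only relation is that their total sum vanishes. The one hypothesis you use implicitly, and which matches the paper's intent (Figure \ref{f.paths}), is that each based loop $\gamma_j$ encircles exactly one puncture $p_j$ with winding number $\pm1$ and no others; if a $\gamma_j$ enclosed additional punctures its class would be a sum of meridians and the Kronecker computation $\bar w_j([\gamma'_k])=\delta_{jk}$ would need adjusting. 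The verification that $w_j$ is well defined and additive on $\pi_1$ of the unordered configuration space (lifting to the ordered space and summing over strands, so that the reindexing by the terminal permutation is harmless) is exactly the point carrying content, and you handle it correctly.
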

\begin{proof}
    This property follows from \cite[Proposition 6.32]{darné2022lower} which shows the structure of the abelianization of the braid group of any surface.  
    
    We recall the property that the fundamental group of the configuration space of a surface is precisely the braid group of that surface, which means that we have:
    $$\pi_1(C_{2n}(\Sigma'))\simeq B_{2n}(\Sigma').$$
    This implies that we have:
    \begin{equation}\label{propab}
        H_1(C_{2n}(\Sigma'))\simeq B_{2n}^{\text{ab}}(\Sigma').
            \end{equation}
Now, following Proposition 6.32 from \cite{darné2022lower} we have an isomorphism:

$$g:B_{2n}^{\text{ab}}((\Sigma')) \rightarrow H_1(\Sigma') \times \langle \sigma \rangle$$
    where the order of $\sigma$ is $2$.

  The homology group $H_1(\Sigma')$ is generated by classes of loops which encircle one puncture, for all punctures but one and classes given by meridians around handles. So, we have a linearly independent set given by classes of loops that encircle the punctures 
  $$ \{ p_1,\ldots,p_{8n},x_1,\ldots,x_{4n}\}.$$ (We exclude the loop around the special puncture $s$.) In particular, the classes provided by loops that go around the punctures 
$$ \{ p_1,\ldots,p_{8n}\} $$ are linearly independent in $H_1(\Sigma')$. Following the isomorphism $g$, these classes correspond precisely to $$\{[\gamma'_1],\dots,[\gamma'_{8n}]\}$$
so we conclude that these classes are independent in $B_{2n}^{\text{ab}}((\Sigma'))$. Using \eqref{propab} we obtain that $$\{[\gamma'_1],\dots,[\gamma'_{8n}]\}$$ are linearly independent in $H_1(C_{2n}(\Sigma'))$, which concludes the proof. 

\end{proof}
\begin{defn}[Subgroup in the homology group] Consider the subgroup generated by the classes of these elements:
$$ H:=\langle[\gamma'_1],\dots,[\gamma'_{8n}]\rangle \simeq \mathbb Z^{8n} \subset H_1(C_{2n}(\Sigma')).$$

Also, following Lemma \ref{iso} we have the projection map:
\begin{align*}
\nu: \ & H \ \ \simeq \ \ \mathbb Z^{8n} \\ 
&\langle[\gamma'_{j}] \rangle  \mapsto   \langle x_j \rangle
\end{align*}
defined by the formula:
$\nu([\gamma'_j])=x_j$, $\forall j \in \{1,\dots,8n\}$.
\end{defn}

\begin{defn}[Projection map] 
We have now the subgroup $H$ in the homology group of the configuration space. Consider the associated projection map which we denote as below:
\begin{equation}
    p: H_1(C_{2n}(\Sigma')) \rightarrow H.
\end{equation}

\end{defn}

\begin{defn}(Local system)\label{localsystem}
We consider the local system given by the composition of the three morphisms defined above: 

\begin{equation}
\begin{aligned}
&\Phi: \pi_1(C_{2n}(\Sigma')) \rightarrow \mathbb Z^{8n}\\
&\hspace{28mm} \langle \{x_j\}_{j \in \{1,...,8n\}}\rangle \ \ \\
&\Phi= \nu \circ p \circ ab. \ \ \ \ \ \ \ \ \ \ \ \ \ \ \ \ \ \ \ \ 
\end{aligned}
\end{equation}
\end{defn}

\begin{defn}[Covering space]\label{localsystemc}
Let $\tilde{C}_{2n}(\Sigma')$ be the covering of the configuration space 
associated to the local system $\Phi$.
\end{defn}
\begin{defn}[Base point in the covering space]
\label{d.dlift}

We also fix a base point in this covering space, denoted by $\tilde{{\bf d}}$, which belongs to the fiber over ${\bf d}$ in $\tilde{C}_{2n}(\Sigma')$.
\end{defn}

\section{Twisted homology groups} \label{s.twistedhomology}

We consider certain versions of the relative homology of the covering space $\tilde{C}_{2n}(\Sigma')$. More specifically, we will make use of a subspace of the middle dimensional Borel-Moore homology of $\tilde{C}_{2n}(\Sigma')$ coming from the twisted Borel-Moore homology of the base space. We cite the results we need from the general setup of \cite{CrM}.  

\begin{defn}\label{T2}
Let us consider two homology groups, as below:
\begin{equation*}
\begin{aligned}
& \bullet H^{\text{lf}}_{2n}(\tilde{C}_{2n}(\Sigma'), \Z) \text{ the Borel-Moore homology of $\tilde{C}_{2n}(\Sigma')$}.\\
&\bullet  H_{2n}(\tilde{C}_{2n}(\Sigma'), \Z) \text{ the homology of $\tilde{C}_{2n}(\Sigma')$}.
\end{aligned}
\end{equation*}
\end{defn}

\begin{rmk}[\cite{CrM}]
 There is a subtle difference between the Borel-Moore homology of a covering space and the twisted Borel-Moore homology of the base space. We will work with the homology of the covering space rather than the twisted homology of the base space. 

\end{rmk}
\begin{prop}[\cite{CrM}, Theorem E]\label{P:5}
There are natural injective maps:
\begin{equation}
\begin{aligned}
& \iota: H^{\text{lf}}_{2n}(C_{2n}(\Sigma'); \mathscr L_{\Phi})\rightarrow H^{\text{lf}}_{2n}(\tilde{C}_{2n}(\Sigma'), \Z)\\
& \iota^{\partial}:H_{2n}(C_{2n}(\Sigma'); \mathscr L_{\Phi})\rightarrow H_{2n}(\tilde{C}_{2n}(\Sigma'), \Z).
\end{aligned}
\end{equation}

In the above equations $\mathscr L_{\Phi}$ is the rank $1$ local system associated to $\Phi$ (\cite[Definition 2.7]{CrM}).
\end{prop}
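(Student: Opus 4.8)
The final statement is Proposition \ref{P:5}, which asserts the existence of natural injective maps from the twisted (Borel--Moore) homology of the base configuration space into the (Borel--Moore) homology of its $\mathbb{Z}^{8n}$-covering. Since this is quoted as Theorem E of \cite{CrM}, my plan would be to explain how to extract exactly this statement from the cited reference rather than to reprove it from scratch; the substance of the argument is a translation between the language of local systems on the base and the language of deck-transformation modules on the cover.

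\medskip

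\emph{The plan is to} proceed in three steps. First I would recall the general setup: given a path-connected, locally nice space $X$ (here $X = C_{2n}(\Sigma')$) together with a surjection $\Phi\colon \pi_1(X)\to G$ onto an abelian group $G$ (here $G=\mathbb Z^{8n}$), one forms the regular covering $p\colon \tilde X\to X$ with deck group $G$, and the (Borel--Moore) homology of $\tilde X$ with $\mathbb Z$-coefficients is naturally a module over $\mathbb Z[G]$, the group ring; here $\mathbb Z[\mathbb Z^{8n}] \cong \mathbb Z[x_1^{\pm1},\dots,x_{8n}^{\pm1}]$. On the other hand, one has the rank-one local system $\mathscr L_\Phi$ on $X$ obtained by letting $\pi_1(X)$ act on $\mathbb Z[G]$ through $\Phi$ by multiplication, and its twisted homology $H_*(X;\mathscr L_\Phi)$. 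The classical fact (Shapiro-type lemma / covering space comparison) is that $H_*(X;\mathscr L_\Phi)$ computes the \emph{ordinary} homology of $\tilde X$ as a $\mathbb Z[G]$-module, and likewise in the Borel--Moore setting. Second, I would point out the one genuinely delicate point that \cite[Theorem E]{CrM} addresses: for Borel--Moore (locally finite) homology of a \emph{non-compact} space, the comparison map $H^{\mathrm{lf}}_*(X;\mathscr L_\Phi)\to H^{\mathrm{lf}}_*(\tilde X;\mathbb Z)$ need not be an isomorphism, because an infinite locally finite chain on $\tilde X$ may project to a non-locally-finite chain on $X$; what survives is that the natural map is \emph{injective}, with image the "finitely supported over $\mathbb Z[G]$" part. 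This is precisely the content being invoked, and for ordinary homology the analogous map $\iota^\partial$ is likewise injective (in fact it is the inclusion of the image of the natural map). Third, I would verify the hypotheses of \cite{CrM} hold in our situation: $C_{2n}(\Sigma')$ is a connected, locally contractible, finite-dimensional manifold (an open subset of a closed surface's symmetric product, away from the diagonal and punctures), $\Phi$ factors through $H_1$ as constructed in Definition \ref{localsystem}, and $\mathbb Z^{8n}$ is a finitely generated free abelian group, so $\mathbb Z[\mathbb Z^{8n}]$ is Noetherian --- all the standing assumptions of Theorem E of \cite{CrM}. Naturality of $\iota$ and $\iota^\partial$ with respect to continuous maps compatible with the local systems is part of that theorem's statement and is built into the chain-level construction.

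\medskip

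Concretely, at the chain level one has the local system complex $C^{\mathrm{lf}}_*(X;\mathscr L_\Phi) = C^{\mathrm{lf}}_*(\tilde X;\mathbb Z)\otimes_{\mathbb Z[G]}\mathbb Z[G]$-type description: a twisted locally finite chain is a locally finite $\mathbb Z[G]$-linear combination of singular simplices in $X$ together with lifts, which one reads as a $G$-equivariant locally finite chain on $\tilde X$ whose support meets each $G$-orbit in finitely many simplices. The map $\iota$ sends such a chain to the same chain viewed simply as a locally finite $\mathbb Z$-chain on $\tilde X$; this is injective on chains and a chain map, and \cite[Theorem E]{CrM} promotes this to injectivity on homology (the point being the absence of new boundaries after forgetting the $G$-equivariance-with-finite-support condition). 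For ordinary homology every chain is already finite, so $\iota^\partial$ is simply the identification of $H_*(X;\mathscr L_\Phi)$ with $H_*(\tilde X;\mathbb Z)$ as $\mathbb Z[G]$-modules, which is an isomorphism (a fortiori injective). Thus both maps exist and are injective and natural, which is the assertion.

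\medskip

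\emph{The hard part} --- or rather the only part requiring care --- is the Borel--Moore statement: it is genuinely false in general that $\iota$ is surjective, so one must be content with injectivity, and the proof of injectivity in \cite{CrM} uses that $\mathbb Z[\mathbb Z^{8n}]$ is an integral domain (indeed a UFD) to control torsion phenomena when passing between locally finite chains upstairs and downstairs. Since the statement here only claims injectivity and explicitly cites \cite[Theorem E]{CrM}, the proof reduces to checking that $C_{2n}(\Sigma')$ and $\Phi$ satisfy that theorem's hypotheses, which they manifestly do. I would therefore present the proof as: recall the definition of $\mathscr L_\Phi$ (done in the statement), observe that $C_{2n}(\Sigma')$ is a connected finite-dimensional manifold and $\mathbb Z^{8n}$ is free abelian of finite rank, hence \cite[Theorem E]{CrM} applies verbatim and yields the two natural injections $\iota$ and $\iota^\partial$. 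No further computation is needed.
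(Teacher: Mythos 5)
Your proposal is correct and takes essentially the same route as the paper: the paper offers no independent proof of this proposition, stating it purely as a citation of \cite[Theorem E]{CrM}, and your argument amounts to verifying that the hypotheses of that theorem apply to $C_{2n}(\Sigma')$ and $\Phi$, together with a (sound) explanation of why injectivity rather than isomorphism is the right claim in the Borel--Moore case. The additional chain-level discussion you give is accurate exposition but not required beyond the citation.
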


\begin{defn}[Homology groups]\label{Hom}
We denote the above submodules in the homologies of the covering space (which are modules over $\Z[x_1^{\pm 1},...,x_{8n}^{\pm 1}]$):
\begin{enumerate}
 \item[$\bullet$]  $\mathscr H^{lf}_{2n}\subseteq H^{\text{lf}}_{2n}(\tilde{C}_{2n}(\Sigma'), \Z)$ \text{ and } 
 \item[$\bullet$]  $\mathscr H_{2n} \subseteq H_{2n}(\tilde{C}_{2n}(\Sigma'), \Z).$ 
\end{enumerate}
The submodules $\mathscr H^{lf}_{2n}$ and $\mathscr H_{2n}$ are  given by the images of the inclusions $\iota$ and $\iota^{\partial}$ of  the homology with twisted coefficients in the homology of the covering space.

\end{defn}

Using this notation, the homology groups of the covering become modules over the group ring $\Z[x_1^{\pm 1},...,x_{8n}^{\pm 1}]$.

The description of these homology groups has certain subtleties, but for our purpose we will use precise classes given by submanifolds in the configuration space on the punctured surface $\Sigma'$.

In what follows we describe a geometric intersection pairing between these homologies of the covering space  with respect to different parts of its boundary. The existence of this pairing is a consequence of a Poincar\'{e}-Lefschetz duality for twisted homology \cite[Proposition 3.2]{CrM} together with a relative cap product for twisted homology as described in \cite[Lemma 3.3]{CrM}. These two maps induce an intersection pairing between homology groups, as in \cite{CrM}.
\begin{prop}(\cite[Proposition 7.6]{CrM})\label{P:3'''}
There exists a topological intersection pairing:
$$\ll ~,~ \gg: \mathscr H^{lf}_{2n} \otimes \mathscr H_{2n} \rightarrow\Z[x_1^{\pm 1},...,x_{8n}^{\pm 1}].$$
\end{prop}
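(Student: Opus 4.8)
\emph{Proof plan.} The plan is to build $\ll\ ,\ \gg$ as the composition of a twisted Poincar\'e--Lefschetz duality isomorphism with a relative cap product, constructed first on the base configuration space with coefficients in $\mathscr L_{\Phi}$ and then transported to the covering via the injections $\iota,\iota^{\partial}$ of Proposition \ref{P:5}. First I would fix an orientation of $C:=C_{2n}(\Sigma')$, which is an orientable $4n$-manifold since it is the unordered configuration space of an orientable surface, and choose a partial compactification $\bar C$ whose boundary splits as $\partial_{\infty}\bar C\cup\partial_{\Delta}\bar C$: the first stratum records particles tending to the removed punctures or to the boundary of the thickened surface, and the second records collisions among the particles. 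The role of this splitting is that $H^{\mathrm{lf}}_{2n}(C;\mathscr L_{\Phi})$ and $H_{2n}(C;\mathscr L_{\Phi})$ are modeled by cycles that are complementary with respect to $\partial_{\infty}\bar C$ --- the Borel--Moore classes may run off to $\partial_{\infty}\bar C$, the ordinary classes are supported away from it --- so that any two representatives meet in a compact set.

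Second, I would invoke the twisted Poincar\'e--Lefschetz duality of \cite[Proposition 3.2]{CrM} to obtain
\[
\mathrm{PD}\colon H^{\mathrm{lf}}_{2n}(C;\mathscr L_{\Phi})\ \xrightarrow{\ \cong\ }\ H^{2n}\!\bigl(C,\partial_{\infty}\bar C;\overline{\mathscr L_{\Phi}}\bigr),
\]
where $\overline{\mathscr L_{\Phi}}$ denotes $\mathscr L_{\Phi}$ composed with the involution $x_j\mapsto x_j^{-1}$ of $R:=\Z[x_1^{\pm1},\dots,x_{8n}^{\pm1}]$; this conjugate appears because Poincar\'e duality pairs a local system against its dual, and it is exactly what makes the resulting form $R$-sesquilinear with values in $R$ rather than in a module twisted by $\mathscr L_{\Phi}^{\otimes 2}$. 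The relative cap product of \cite[Lemma 3.3]{CrM}, composed with the augmentation $H_0(C;R)\to R$, then gives
\[
H^{2n}\!\bigl(C,\partial_{\infty}\bar C;\overline{\mathscr L_{\Phi}}\bigr)\otimes H_{2n}(C;\mathscr L_{\Phi})\longrightarrow R,
\]
and precomposing with $\mathrm{PD}$ yields a pairing on $H^{\mathrm{lf}}_{2n}(C;\mathscr L_{\Phi})\otimes H_{2n}(C;\mathscr L_{\Phi})$. Precomposing further with $\iota$ and $\iota^{\partial}$ produces the pairing $\ll\ ,\ \gg$ on $\mathscr H^{\mathrm{lf}}_{2n}\otimes\mathscr H_{2n}$ asserted in the statement. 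Finally I would record the geometric reformulation, which is the form in which the pairing is later used: if two classes are carried by lifts of submanifolds whose geometric supports on $\Sigma'$ are transverse, then $\ll\ ,\ \gg$ is the signed count of their intersection points, each point weighted by the monomial in the $x_j$ reading off the value of $\Phi$ on the loop obtained by closing up the two lifted paths to that point; this is the content of \cite[Proposition 7.6]{CrM}.

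The step I expect to be the main obstacle is pinning down the boundary strata, identifying which homology theory is ``locally finite'' along which face, and then checking enough naturality that the whole construction survives the passage to the covering. Since $C$ is non-compact both because $\Sigma'$ is open and punctured and because of the fat diagonal, one must arrange $\bar C$ so that the duality isomorphism of \cite[Proposition 3.2]{CrM} and the relative cap product of \cite[Lemma 3.3]{CrM} are taken with respect to matching faces, and so that the construction intertwines with the maps into $H^{\mathrm{lf}}_{2n}(\tilde C_{2n}(\Sigma');\Z)$ and $H_{2n}(\tilde C_{2n}(\Sigma');\Z)$ that underlie $\iota,\iota^{\partial}$. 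A secondary point needing care is that the augmentation genuinely lands in $R$ and not in a localization or a completion, which amounts to finiteness of the intersection; abstractly this is built into \cite{CrM}, and for the explicit classes $\mathscr F(\sigma),\mathscr L(\sigma)$ and $\mathscr F^H(\sigma_H),\mathscr L^H(\sigma_H)$ used in the sequel it will follow from the compactness of the ovals together with the controlled ends of the arc-supported classes.
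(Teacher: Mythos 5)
Your proposal follows essentially the same route as the paper, which itself establishes this proposition by citing \cite{CrM}: the pairing is obtained from the twisted Poincar\'e--Lefschetz duality of \cite[Proposition 3.2]{CrM} composed with the relative cap product of \cite[Lemma 3.3]{CrM}, transported to the covering via the injections $\iota,\iota^{\partial}$, with the geometric signed-monomial count being the content of \cite[Proposition 7.6]{CrM}. The additional details you supply (orientation, boundary strata, the conjugate local system, finiteness of the intersection) are consistent elaborations of what the paper delegates to \cite{CrM}.
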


\subsection{Computing the intersection pairing}
We will need the precise form of this intersection pairing in the next part so we sketch the key components for its formula, which is presented in \cite[Section 7]{CrM}. Recall we are working with coefficients that belong to the group ring $\Z[x_1^{\pm 1},...,x_{8n}^{\pm 1}]$. For this, we introduce the following notation.

\begin{defn}\label{tildelocsyst}
Let $\Phi$ be the morphism induced by the local system $\Phi$, which takes values in the group ring of $\Z^{8n}$ :
\begin{equation}
\Phi: \pi_1(C_{2n}(\Sigma')) \rightarrow \Z[x_1^{\pm 1}, \ldots ,x_{8n}^{\pm 1}].
\end{equation}
\end{defn}
We use this morphism for the computation of the intersection pairing as follows. 
Fix two homology classes $$H_1 \in \mathscr H^{lf}_{2n} \text{ and } H_2 \in \mathscr H_{2n}. $$ Suppose $H_1$ and $H_2$ are given by the lifts of two submanifolds  $X_1,X_2 \subseteq C_{2n}(\Sigma')$, which we denote $\tilde{X}_1, \tilde{X}_2 \subseteq \tilde{C}_{2n}(\Sigma')$. We further assume that $X_1$ and $X_2$ intersect transversely in a finite number of points.

For an intersection point $x \in X_1 \cap X_2$ we construct a loop $l_x \subseteq C_{2n}(\Sigma')$.

\begin{defn} \label{d.associatedloop} {\bf Construction of the associated loop $l_x$}\\
 Suppose we have two paths $\gamma_{X_1}, \gamma_{X_2}$ starting in $\mathbf{d} = (b_1, \ldots, b_{2n})$ and ending on $X_1$, $X_2$ respectively. Also, we assume that:   
$\tilde{\gamma}_{X_1}(1) \in \tilde{X}_1$ and $ \tilde{\gamma}_{X_2}(1) \in \tilde{X}_2$.
Choose also two paths $\delta_{X_1}, \delta_{X_2}:[0,1]\rightarrow C_{2n}(\Sigma')$ such that:
\begin{equation}
\begin{cases}
Im(\delta_{X_1})\subseteq X_1; \delta_{X_1}(0)=\gamma_{X_1}(1);  \delta_{X_1}(1)=x\\
Im(\delta_{X_2})\subseteq X_2; \delta_{X_2}(0)=\gamma_{X_2}(1);  \delta_{X_2}(1)=x.
\end{cases}
\end{equation}
The composition of these paths gives a loop, as follows:
$$l_x=\gamma_{X_2}^{-1} \circ \delta_{X_2}^{-1}\circ \delta_{X_1} \circ \gamma_{X_1}.$$
Denote by $\alpha_x$ the sign of the geometric intersection at the point $x$ between $X_1$ and $X_2$ (in the base configuration space). See Figure \ref{f.loopx} for an example of $l_x$. 
\end{defn}

\begin{prop}[Intersection pairing from intersections in the base space]\label{formint}  
The intersection pairing can be described from the set of loops $l_x$ and the local system as below:
\begin{equation}\label{eq:1}  
\ll H_1,H_2 \gg=  \sum_{x \in X_1 \cap X_2}  \alpha_x \cdot \Phi(l_x) \in \Z[x_1^{\pm 1},...,x_{8n}^{\pm 1}].
\end{equation}
\end{prop}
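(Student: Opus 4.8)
The plan is to reduce the proposition to the standard description of the intersection pairing in twisted homology, exactly as set up in \cite{CrM}, by checking that our classes $H_1, H_2$ and the loops $l_x$ satisfy the hypotheses of \cite[Proposition 7.6]{CrM}. First I would recall that $H_1 = \iota(\bar H_1)$ and $H_2 = \iota^\partial(\bar H_2)$ come from classes $\bar H_1 \in H^{\text{lf}}_n(C_{2n}(\Sigma'); \mathscr L_\Phi)$ and $\bar H_2 \in H_n(C_{2n}(\Sigma'); \mathscr L_\Phi)$ represented by the lifts $\tilde X_1, \tilde X_2$ of the submanifolds $X_1, X_2$ carrying the chosen lift data $\tilde\gamma_{X_1}(1)\in\tilde X_1$, $\tilde\gamma_{X_2}(1)\in\tilde X_2$. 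By the Poincar\'e--Lefschetz duality for twisted homology \cite[Proposition 3.2]{CrM} and the relative cap product \cite[Lemma 3.3]{CrM}, the pairing $\ll\,,\,\gg$ is computed as the image under the cap product of the Poincar\'e dual of $\bar H_1$ with $\bar H_2$, which localizes at the transverse intersection points $X_1\cap X_2$.

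The key step is to identify the local contribution at each $x\in X_1\cap X_2$. Since $X_1$ and $X_2$ intersect transversely in finitely many points, a standard partition-of-unity/tubular-neighbourhood argument in the base configuration space $C_{2n}(\Sigma')$ shows that the cap product decomposes as a finite sum over these points, and each term is supported in a small ball around $x$. The geometric sign of the transverse intersection contributes $\alpha_x$. The coefficient in $\Z[x_1^{\pm1},\dots,x_{8n}^{\pm1}]$ records the discrepancy between the chosen lift of $X_1$ near $x$ and the chosen lift of $X_2$ near $x$: following the path $\gamma_{X_1}$ into $X_1$, moving within $X_1$ along $\delta_{X_1}$ to $x$, then returning within $X_2$ along $\delta_{X_2}^{-1}$ and back along $\gamma_{X_2}^{-1}$ produces the loop $l_x$, whose class under $\Phi$ (equivalently $\tilde\Phi$, under the identification of Definition \ref{tildelocsyst}) is precisely the deck transformation measuring how the two lifted sheets differ over $x$. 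This is exactly the content of \cite[Section 7]{CrM}, and I would verify that the conventions for base points $\mathbf d$, for the paths, and for the orientation of $l_x$ match those in the reference, so that formula \eqref{eq:1} holds on the nose rather than up to a global unit.

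The two technical points requiring care are: first, well-definedness, i.e. that $\Phi(l_x)$ does not depend on the auxiliary choices of $\gamma_{X_i}$ and $\delta_{X_i}$ — any two choices of $\gamma_{X_i}$ differ by a loop in the base whose contribution is absorbed by the fixed lift condition $\tilde\gamma_{X_i}(1)\in\tilde X_i$, and any two choices of $\delta_{X_i}$ with the same endpoints differ by a loop inside $X_i$, which is killed because $X_i$ is simply connected (or more precisely because the relevant lift of $X_i$ is a single sheet, so loops in $X_i$ map trivially under the restriction of $\Phi$); second, that the image classes $\iota(\bar H_i)$, $\iota^\partial(\bar H_i)$ genuinely lie in the Borel--Moore homology of the covering and pair via the Poincar\'e duality of Proposition \ref{P:3'''}, which is guaranteed by Proposition \ref{P:5}. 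I expect the main obstacle to be purely bookkeeping: matching the orientation and base-point conventions of \cite{CrM} with the specific geometric setup here (the base points $b_j$ near the punctures, the role of $\tau(j)$, and the direction in which $l_x$ is traversed) so that no spurious sign or monomial factor appears; the underlying topology is entirely supplied by the cited results.
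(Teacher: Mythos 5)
Your proposal is correct and follows the same route the paper itself takes: the paper gives no proof of Proposition \ref{formint} at all, deferring entirely to \cite[Section 7]{CrM} (via the duality and cap-product statements already invoked for Proposition \ref{P:3'''}), and your write-up is a faithful expansion of exactly that citation, with the loop $l_x$ measuring the deck transformation relating the two chosen lifts over each transverse intersection point. The only point worth flagging is that independence of $\Phi(l_x)$ from the choice of $\delta_{X_i}$ requires $\Phi$ to vanish on $\pi_1(X_i)$ (your parenthetical "single sheet" condition), which is implicit in the hypothesis that $\tilde X_i$ is a lift of $X_i$ rather than a consequence of simple connectivity.
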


\subsection{Specialization of coefficients associated to a state} \label{SS:spec} 

In the next few sections, we set up the specializations of coefficients for the intersection pairing $\ll ~,~ \gg$ so that they correctly compute key quantities in the definition of the Jones polynomial through the state sum from the Kauffman bracket, and in the definition of the HOMFLY-PT polynomial through a similar state sum. We conclude Section \ref{s.twistedhomology} with explicit specializations of the intersection pairing for both polynomials in Section \ref{ss.specialization}. 

\

\paragraph{\textbf{Coloring of Kauffman state circles.}}
Fix a link diagram $D$. 
Our first task for constructing the specialization is to define a coloring on the state circles resulting from applying a Kauffman state $D$. 

\begin{defn} \label{d.Kauffmanstate} A \textit{Kauffman state} $\sigma: c(D) \rightarrow \{ \pm \}$ is a function on the set of crossings of $D$ that assigns to a crossing $\vcenter{\hbox{\includegraphics[scale=.15]{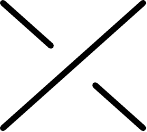}}}$ the $+$-resolution $\vcenter{\hbox{\includegraphics[scale=.2]{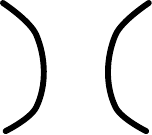}}}$ or the $-$-resolution $\vcenter{\hbox{\includegraphics[scale=.2]{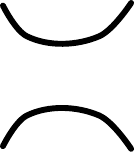}}}$.  
\end{defn}

Let $\sigma$ be a Kauffman state on a link diagram $D$. Replacing each crossing of the diagram with the choice of the $+$- or $-$- resolution by the Kauffman state results in a set of disjoint circles $S_{\sigma}$. A circle in $S_{\sigma}$ is called a \textit{state circle}. We define a coloring on the set of state circles such that every state circle is bicolored with an even number of blue segments and the same number of black segments, and the coloring alternates between blue and black as we traverse each state circle: 

First isotope the link diagram $D$ so that it lies on the surface $\Sigma$ as shown in Figure \ref{f.disotope}. The local replacements from applying a Kauffman state $\sigma$ to the link diagram can then be done on the surface $\Sigma$ by replacing the pair of link strands from $x_l$ to $y_r$ and from $x_r$ to $y_l$ as follows:\\ 

\hspace*{15mm}Replace by\\
\[  \begin{cases} 
&\text{the pair of $\alpha'$ arcs from $x_l$ to $y_l$ and then from $x_r$ to $y_r$,}\\
&  \text{ if $\sigma$ chooses $\vcenter{\hbox{\includegraphics[scale=.2]{pres.pdf}}}$.} \\
&\text{the pair of $\alpha'$ arcs from $x_l$ to $x_r$ and then from $y_l$ to $y_r$,}\\ 
& \text{ if $\sigma$ chooses $\vcenter{\hbox{\includegraphics[scale=.2]{nres.pdf}}}$.} \\ 

\end{cases} \]

At every crossing, color both types of arcs resulting from the choice of the resolution blue. This is shown in Figure \ref{f.statecolorarcs}. 

\begin{figure}[H]
\begin{center}
\def \svgwidth{.5\columnwidth}
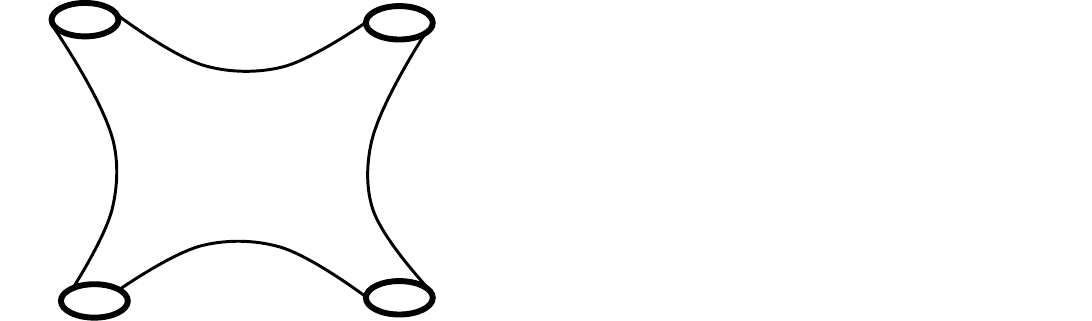
\end{center}
\caption{\label{f.statecolorarcs} Colorings on $\alpha'$ arcs of each choice of the resolution.}
\end{figure}

Keep the rest of the link strands black. Given a Kauffman state $\sigma$, replacing each crossing by the choice of the resolution by the state now results in a set of disjoint bi-colored circles. 
\begin{rmk} \label{r.bicolored}
By construction, for each state $\sigma$ there are twice as many blue-colored arcs as the number of crossings in the link diagram. Also, there are twice as many black-colored arcs as the number of crossings.    
\end{rmk}
This property comes from the fact that between a pair of blue-colored arcs on the same strand there is a black arc, and there is a pair of blue arcs for every crossing. 

\begin{figure}[H]
\begin{center}
\def \svgwidth{.5\columnwidth}
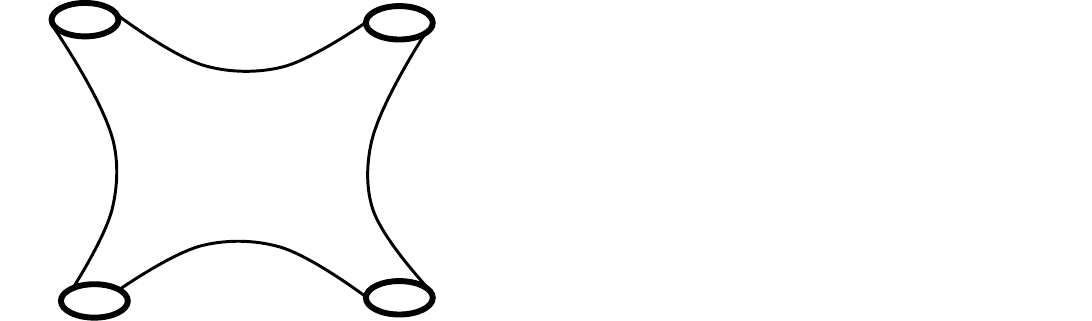
\end{center}
\caption{\label{f.statecolor} Arcs shown with punctures at a crossing.}
\end{figure}

\paragraph{\textbf{Special punctures associated to a state}}
Next we fix a Kauffman state $\sigma$. As we have seen, there is an associated set of disjoint circles on the surface, denoted by $S_{\sigma}$.
Since our surface $\Sigma$ is constructed from the knot diagram $D$ in a plane $\pi$, we will fix a projection of the set of punctures onto $\pi$ and denote this set as: $\{ \tilde{p}_1,\ldots,\tilde{p}_{8n},\tilde{x}_1,\ldots,\tilde{x}_{4n},\tilde{s}\}.$
\begin{figure}[H]
\def \svgwidth{.7\columnwidth}
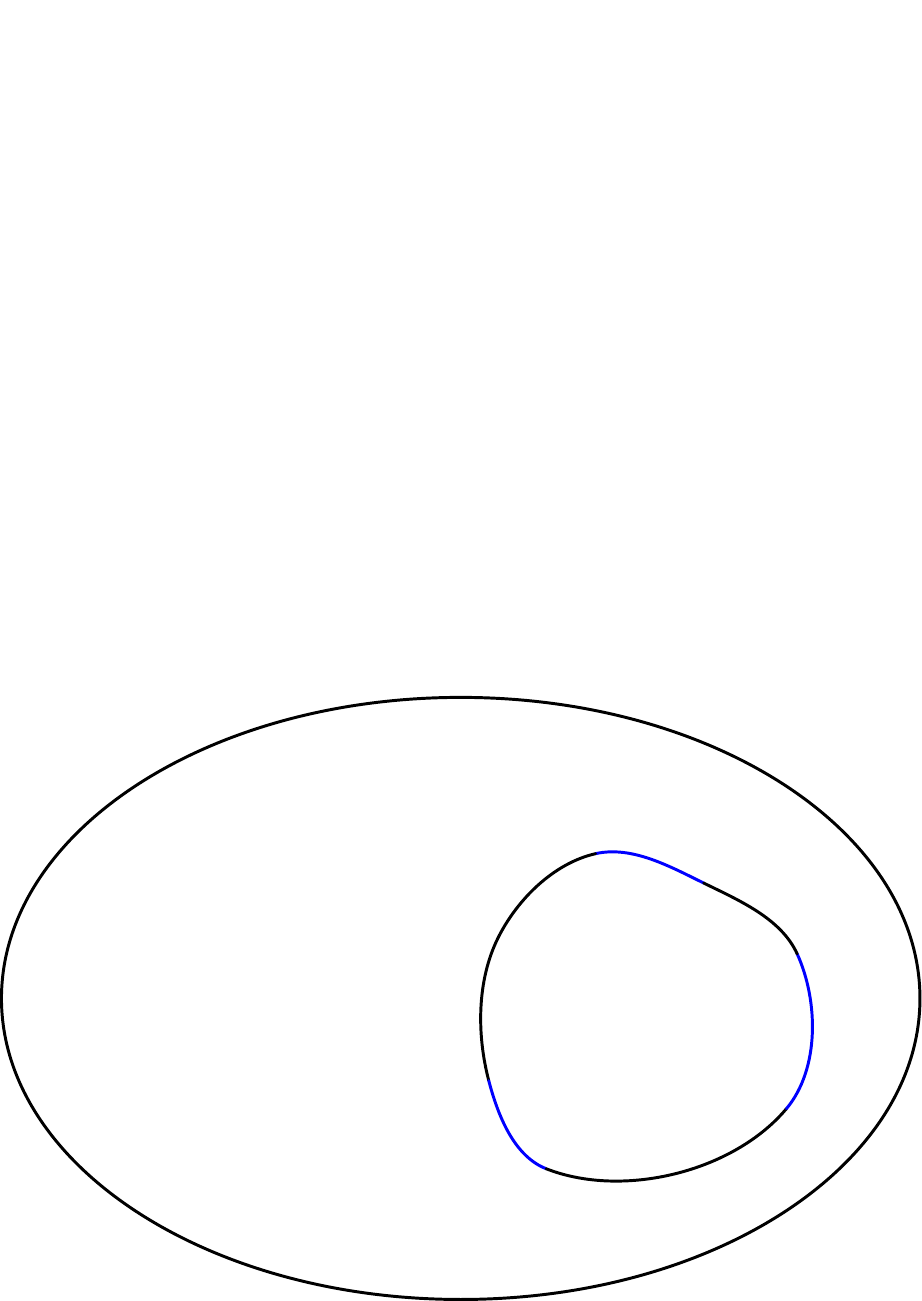
\caption{The punctured surface $\Sigma'$ projected onto the plane of the diagram  and associated images of the punctures.}
\end{figure}
\begin{rem}
    On the projection plane, the state circles may contain other circles.
\end{rem}
Up to isotopy, we can assume that the state circles do not project onto the punctures. Denote by $S'_{\sigma}$ the set of projections of the circles from $S_{\sigma}$. Without loss of generality, we also suppose that the puncture $\tilde{s}$ ``at infinity''  does not belong to the disk bounded by any circle from $S'_{\sigma}$ in $\pi$.

We  define the specialization of coefficients associated to the local system $\Phi$ using the above projections.

\begin{defn}[Disks and punctures associated to state circles] Let $C$ be a state circle from $S_{\sigma}$ and $C'\in S'_{\sigma}$ its projection onto the plane of the knot diagram. We denote by $D^C$ the disk bounded by $C'$ in this plane, and called it the \textit{state disk associated to $C$}.
\end{defn}

\begin{defn}[Punctures associated to state circles] Let $C$ be a state circle and consider its associated state disk $D^C \subseteq \pi$. The disk $D^C$ potentially contains some other disks associated to other state circles. We consider the region in the plane obtained from $D^C$ by removing all these disks, and denote this region by $R^C$.
\end{defn}

\begin{lem} \label{l.associatedpuncture} For any state circle $C$, the associated region $R^C$ contains at least one puncture from the set $\{\tilde{p}_1,\ldots,\tilde{p}_{8n}\}$.
    
\end{lem}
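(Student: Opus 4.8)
The plan is to trace the state circle $C$ near a crossing that it passes through, locate one of the purple punctures that is placed right beside the resolution arc there, and then use the planarity of the family of state circles to show that this puncture cannot be stolen away from $R^C$ by a nested circle.

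First I would dispose of the degenerate case: a state circle carrying no blue arc is a crossingless split component of $D$, and since all the purple punctures sit near crossings, no such circle can bound a disk containing one. So I assume from now on that $D$ has no crossingless split component (equivalently, every state circle contains at least one blue arc). Then $C$ contains one of the two resolution (blue) arcs at some crossing $c_i$; call this arc $\beta\subset C=\partial D^{C}$.

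The key local input, read off directly from Figure \ref{f.paths}, is that the eight purple punctures attached to $c_i$ lie in a small disk $N$ around the crossing point of $c_i$, two of them flanking each of the four $\alpha$-arcs, and that $N$ may be taken small enough that the only arcs of $S_{\sigma}$ meeting $N$ are the (one or two) resolution arcs at $c_i$; in particular at most two state circles meet $N$, and one of them is $C$. After resolving $c_i$, the arc $\beta$ (and the co-resolution arc $\beta'$, if it lies on a different circle) cuts $N$ into strips, one of which is interior to $D^{C}$ along $\beta$. A finite case check over the two Kauffman resolutions at $c_i$, and over which resolution arc is $\beta$, shows that one of the eight punctures $\tilde p_j$ lies in that interior strip; hence $\tilde p_j\in D^{C}\cap N$. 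It remains to check $\tilde p_j\notin D^{C'}$ for every state circle $C'$ nested directly inside $D^{C}$: if $\tilde p_j\in D^{C'}$, then, since $\tilde p_j$ lies in $N$ arbitrarily close to $\beta\subset C$ and $C'$ is disjoint from $C$, the curve $C'$ must pass through $N$, so $C'$ is the (at most one) state circle other than $C$ running through $c_i$, namely the one carrying $\beta'$; but the same case check places $\tilde p_j$ on the side of $\beta$ away from $\beta'$, so $\tilde p_j\notin D^{C'}$, a contradiction. (If both resolution arcs at $c_i$ belong to $C$ itself, no such $C'$ exists and the conclusion is immediate.) Therefore $\tilde p_j\in D^{C}\setminus\bigcup_{C'}D^{C'}=R^{C}$.

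The main obstacle is the local case check: one must verify, from the explicit positions of the eight purple punctures in Figure \ref{f.paths} together with the two Kauffman resolutions, that in every case a puncture falls in the strip of $N$ interior to $D^{C}$ along the chosen resolution arc, and on the far side of that arc from the co-resolution arc. This is precisely why eight punctures per crossing are used rather than fewer; it is a routine, figure-dependent verification, while the remaining argument is soft topology of nested planar circles.
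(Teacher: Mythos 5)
Your overall strategy is the same as the paper's: pick a blue resolution arc $\beta$ of $C$ at a crossing, observe that one of the two punctures flanking $\beta$ lands on the $D^C$ side, and then argue that a puncture sitting in a small neighbourhood of $C$ cannot be swallowed by a disk nested inside $D^C$. Your explicit handling of the degenerate case is a genuine improvement over the paper: the lemma as stated does fail for a crossingless split component, and the paper's proof silently assumes every state circle carries a blue arc.

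However, your exclusion step rests on a positional claim that fails in a standard configuration. You assert that the case check places $\tilde p_j$ both in the strip of $N$ interior to $D^C$ along $\beta$ \emph{and} on the side of $\beta$ away from the co-resolution arc $\beta'$. These two conditions are incompatible exactly when $D^C$ lies locally \emph{between} $\beta$ and $\beta'$: take, for instance, the Kauffman state on the standard Hopf link diagram that produces two nested circles; for the outer circle $C$, at either crossing the interior strip is the annular region between $\beta$ and $\beta'$, so the only available puncture is on the side of $\beta$ \emph{towards} $\beta'$. Thus your argument as written cannot conclude in precisely the situation where a nested circle $C'$ actually passes through the crossing. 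The step is repairable without changing the approach: since $C'$ is nested inside $C$, the disk $D^{C'}$ is disjoint from $C$, so $D^{C'}\cap N$ is an open set whose frontier in $N$ lies on $\beta'\cap N$ and which avoids $\beta$; it is therefore contained in the component of $N\setminus\beta'$ not containing $\beta$, and the puncture, which sits adjacent to $\beta$, is not in it. With that substitution your proof goes through and essentially coincides with the paper's, whose own justification (``the punctures are chosen in a tubular neighbourhood of the arc'') elides the same point, since $C'$ may come arbitrarily close to $C$ at a shared crossing.
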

\begin{proof}
The state circle $C$ is constructed from the blue-colored arcs associated to the resolution $\sigma$. Recall there are two punctures associated to each such arc, say $p^j$ and $\bar{p}^j$, see Figure \ref{f.paths}.  Exactly one of these punctures projects inside the disc $D^C$. Since these punctures are chosen in a tubular neighborhood of the arc, they are not contained in any other associated discs that lie inside $D^C$. So, the projection of the puncture lies in the region $R^C$.
\end{proof}
\begin{defn}[$\sigma$-punctures] \label{d.projectionf}
    For a state circle $C_i \in S_{\sigma}$ choose one of the punctures guaranteed by the proof of Lemma \ref{l.associatedpuncture}  that projects inside $R^{C_i}$  and denote it as $P^i$. 

    Also denote the evaluation of the local system $\Phi$ around this puncture by $x_{f(i)}$, where $f(i)\in \{1,...,8n\}$.
    
\end{defn}

Following the procedure described above, each circle in the collection of state circles $S_{\sigma}=\{C_1,...,C_{|\sigma|}\}$ has an associated puncture. We call this collection of ``special punctures'' associated to the state $\sigma$:
$$\{P^1,\ldots,P^{|\sigma|}\} \subseteq \{\tilde{p}_1,\ldots,\tilde{p}_{8n}\}$$  
 the set of \textit{$\sigma$-punctures}.

\begin{defn}[$\sigma$-punctures inside a fixed disk]\label{sigmpunctures}
    For a circle $C_i \in S_{\sigma}$ consider all the $\sigma$-punctures associated to other state circles $C_{i'} \subset C_i$ that belong to the associated disk $D^{C_i}$ that do not include the chosen puncture $P^i$. 
    Denote this set as below:
    \begin{equation}
        N^i := \{N^i_1,\ldots,N^i_{\tau(i)}\} \subset \{P^1,\ldots,P^{|\sigma|}\}.
    \end{equation}

    Also denote the evaluation of the local system $\Phi$ around each of these punctures by $x_{g_i(j)}$, where $j\in \{1,\ldots,\tau(i)\}$ and $g_i(j)\in \{1,\ldots,8n\}$.
\end{defn}

\paragraph{\textbf{Generic Specialization of coefficients}}
We define a specialization of coefficients for our local system, such that the monodromies that are ``seen'' through this specialization are just the ones around the set of $\sigma$-punctures.
Recall that we have the local system introduced in Definition \ref{tildelocsyst}:
\begin{equation}
\Phi: \pi_1(C_{2n}(\Sigma')) \rightarrow \Z[x_1^{\pm 1},\ldots,x_{8n}^{\pm 1}].
\end{equation}
Our aim is to introduce a change of coefficients that will lead to the desired intersection of homology classes, once specialized. We do this in two steps. First for each Kauffman state $\sigma$ we project onto $\Z^{|\sigma|}$, which means that we count only the monodromies around $\sigma$-punctures. After that, we evaluate each component of $\Z^{|\sigma|}$ towards polynomials in one or two variables, associated to the case of the Jones polyomial or HOMFLY-PT polynomial, respectively. Let us make this precise.
\\

\noindent {\bf Step 1-Mark the $\sigma$-punctures}

\

\begin{defn}[Projection map which keeps just the $\sigma$-punctures]
 Let us denote the following projection map   
\begin{equation}    
\begin{aligned}
&p_{\sigma}: \ \ \ \ \ \ \Z[x_1^{\pm 1}, \ldots ,x_{8n}^{\pm 1}] \ \ \ \ \ \rightarrow \ \ \ \ \ \Z[x_{f(1)}^{\pm 1},\ldots ,x_{f(|\sigma|)}^{\pm 1}], 
\end{aligned}
\end{equation}
with $f(i)\in \{1, \ldots, 8n\}$ as defined by Definition \ref{d.projectionf}. 
\end{defn}

\

\noindent{\bf Step 2- Evaluate monodromies around $\sigma$-punctures}

\

The second step prescribes the evaluation of monodromies around each $\sigma$-puncture such that a monodromy requirement is satisfied.
The main idea is that when we count the contributions of the above evaluations around punctures belonging to any state disk we always obtain  a fixed monomial. 

This condition that we get a fixed monomial for each state circle will play a key role that ensures our intersection of homology classes gives the Kauffman bracket when evaluated on the set of state circles. We proceed as follows.

\begin{defn}[Requirement for specialization of coefficients]\label{reqspec}
   Fix $Q$ to be a polynomial or an indeterminate.
 We search for a change of coefficients:
 \begin{equation}
      \begin{aligned}
&\alpha^{\sigma}_Q: \ \ \ \ \ \  \Z[x_1^{\pm 1},\ldots,x_{8n}^{\pm 1}] \ \ \ \ \ \rightarrow \ \ \ \ \ \Z[Q^{\pm1}],
\end{aligned}
 \end{equation}

which is a ring homomorphism that satisfies the following {\em Monodromy Requirement}.

\

\noindent {\bf Monodromy Requirement} Consider a state circle $C_i$ and the 

monodromies around punctures associated to the $\sigma$-punctures inside $D^{C_i}$. The requirement is that the product of all these monodromies gets evaluated through $\alpha^{\sigma}_Q$ to our fixed indeterminate $Q$.

\end{defn}

\

\begin{lem}
    There exists a change of coefficients $\alpha^{\sigma}_Q$, which we construct explicitly and which satisfies the Monodromy Requirement. 
\end{lem}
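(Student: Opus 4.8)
The plan is to construct $\alpha^{\sigma}_Q$ explicitly by following the two-step procedure outlined above: first project away all punctures that are not $\sigma$-punctures via the map $p$, and then define a carefully chosen evaluation on each remaining variable $x_{f(i)}$. The key combinatorial object is the nesting structure of the state disks $D^{C_1}, \ldots, D^{C_{|\sigma|}}$ in the projection plane $\pi$: since the $C_i$ are disjoint circles, this nesting is a forest (a disjoint union of rooted trees), where $C_{i'}$ is a child of $C_i$ when $D^{C_{i'}} \subsetneq D^{C_i}$ with no intermediate disk. I would record this via the sets $N^i$ of Definition \ref{sigmpunctures}: for a circle $C_i$, the $\sigma$-punctures lying in $D^{C_i}$ are exactly $P^i$ together with all $P^{i'}$ for descendants $C_{i'}$ of $C_i$, and the Monodromy Requirement demands that the product of the corresponding monodromies evaluates to $Q$.

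The main step is then a downward induction (or equivalently an inclusion–exclusion / Möbius inversion over the nesting forest). Writing $q_i := \alpha^{\sigma}_Q(x_{f(i)})$ for the value to be assigned to the monodromy around $P^i$, the requirement says $\prod_{C_{i'} \subseteq C_i} q_{i'} = Q$ for every $i$, where the product ranges over $C_i$ itself and all its descendants. For a leaf $C_i$ (innermost circle, $N^i = \emptyset$) this forces $q_i = Q$. For a non-leaf $C_i$ with children $C_{i_1}, \ldots, C_{i_r}$, dividing the equation for $C_i$ by the equations for the children gives $q_i = Q^{\,1-r}$, i.e. $q_i = Q / \prod_{k} Q = Q^{1-r}$ where $r$ is the number of children. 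One checks directly that this assignment satisfies the requirement for every circle simultaneously: the telescoping product over a disk $D^{C_i}$ collapses to a single $Q$. Since all exponents are integers and $Q$ is invertible in $\Z[Q^{\pm 1}]$, setting $\alpha^{\sigma}_Q(x_j) := 1$ for $j \notin \{f(1), \ldots, f(|\sigma|)\}$ and $\alpha^{\sigma}_Q(x_{f(i)}) := Q^{1 - (\#\text{children of } C_i)}$ otherwise extends to a well-defined ring homomorphism $\Z[x_1^{\pm 1},\ldots,x_{8n}^{\pm 1}] \to \Z[Q^{\pm 1}]$ by sending each generator to a unit, and by construction $\alpha^{\sigma}_Q = \alpha^{\sigma}_Q \circ p$, so factoring through Step 1 is automatic.

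The part requiring the most care is verifying that the Monodromy Requirement is genuinely about the product over \emph{all} $\sigma$-punctures inside $D^{C_i}$ (both $P^i$ and the elements of $N^i$), and that these are precisely indexed by the subtree rooted at $C_i$ — this uses Lemma \ref{l.associatedpuncture} and the fact that each $\sigma$-puncture $P^{i'}$ was chosen to lie in the region $R^{C_{i'}}$, hence inside $D^{C_{i'}}$ but outside every disk nested strictly inside $D^{C_{i'}}$, so it lies in $D^{C_i}$ if and only if $C_{i'} \subseteq C_i$. Once the indexing is pinned down, the telescoping verification is routine. I would close by remarking that this construction is "geometric" in the sense of the Monodromy Requirement, and record the two specializations $Q = Q_J$ and $Q = Q_H$ needed for Theorems \ref{t.main} and \ref{t.main2} as applications in the subsequent sections.
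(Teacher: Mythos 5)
Your proposal is correct and follows essentially the same route as the paper: an induction over the nesting forest of state disks, assigning $Q$ to innermost circles and then solving for $q_i$ so that the product over all $\sigma$-punctures in $D^{C_i}$ telescopes to $Q$. Your closed form $q_i = Q^{1-r}$ (with $r$ the number of children) is exactly what the paper's recursive formula $\alpha^{\sigma}_Q(x_{f(i)}) = Q\cdot\prod_{j}\alpha_Q(x_{g_i(j)})^{-1}$ evaluates to once the inductive hypothesis is applied to each child subtree, and your explicit remark that the remaining generators are sent to $1$ (so the map factors through $p$ and is a genuine ring homomorphism) only makes precise what the paper leaves implicit.
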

\begin{proof}

Since the Monodromy Requirement contains obstructions for evaluation of monodromies just around $\sigma$-punctures, it means that such $\alpha^{\sigma}_Q$ should factor through the projection $p_{\sigma}$, so it is enough to define 
 \begin{equation}
      \begin{aligned}
&\alpha^{\sigma}_Q: \ \ \ \ \ \  \Z[x_{f(1)}^{\pm 1},\ldots,x_{f(|\sigma|)}^{\pm 1}] \ \ \ \ \ \rightarrow \ \ \ \ \ \Z[Q^{\pm1}].
\end{aligned}
 \end{equation}

We will proceed by an inductive argument. First, we look at state circles whose associated disks do not contain any other state disk. If $C'_i$ is such a state circle, then $D^{C_i}$ contains just a $\sigma$-puncture, namely $P^i$. This is evaluated to $x_{f(i)}$ by the local system. In this case, we define:
\begin{equation}
    \alpha^{\sigma}_Q(x_{f(i)})=Q.
    \end{equation}

This means that the unique puncture associated to such a disk has the monodromy specialized to $Q$.

We proceed to define $\alpha^{\sigma}_Q$ on every $x_{f(i)}$ for which the corresponding special puncture $P^i$ is contained in a state circle whose associated disk does not contain any other disk. 

Inductively, consider a state circle $C'_i$ which contains other state circles,  and suppose that we have defined the function $\alpha^{\sigma}_Q$ around all punctures associated to the region $$D^{C_i} \setminus R^{C_i},$$ which are:
 \begin{equation}
        N^i = \{N^i_1, \ldots,N^i_{\tau(i)}\} \subset \{P^1, \ldots,P^{|\sigma|}\}, 
    \end{equation}
 and that $a^{\sigma}_Q$ satisfies the Monodromy requirement for all the state circles $C'\subset D^{C_i}$. 
Recall that the local system $\Phi$ evaluates monodromies around these punctures in $N^i$, by: $$x_{g_i(j)},$$ where $j\in \{1,\ldots,\tau(i)\}$ and $g_i(j)\in \{1,\ldots, 8n\}$, see Definition \ref{sigmpunctures}.
This means that we have already defined :
$$\alpha^{\sigma}_Q(x_{g_i(j)}), \text { for } j\in \{1,\ldots,\tau(i)\}.$$
\begin{figure}[H]
\def \svgwidth{.5\columnwidth}
\begingroup%
  \makeatletter%
  \providecommand\color[2][]{%
    \errmessage{(Inkscape) Color is used for the text in Inkscape, but the package 'color.sty' is not loaded}%
    \renewcommand\color[2][]{}%
  }%
  \providecommand\transparent[1]{%
    \errmessage{(Inkscape) Transparency is used (non-zero) for the text in Inkscape, but the package 'transparent.sty' is not loaded}%
    \renewcommand\transparent[1]{}%
  }%
  \providecommand\rotatebox[2]{#2}%
  \newcommand*\fsize{\dimexpr\f@size pt\relax}%
  \newcommand*\lineheight[1]{\fontsize{\fsize}{#1\fsize}\selectfont}%
  \ifx\svgwidth\undefined%
    \setlength{\unitlength}{173.09411958bp}%
    \ifx\svgscale\undefined%
      \relax%
    \else%
      \setlength{\unitlength}{\unitlength * \real{\svgscale}}%
    \fi%
  \else%
    \setlength{\unitlength}{\svgwidth}%
  \fi%
  \global\let\svgwidth\undefined%
  \global\let\svgscale\undefined%
  \makeatother%
  \begin{picture}(1,0.68651878)%
    \lineheight{1}%
    \setlength\tabcolsep{0pt}%
    \put(0,0){\includegraphics[width=\unitlength,page=1]{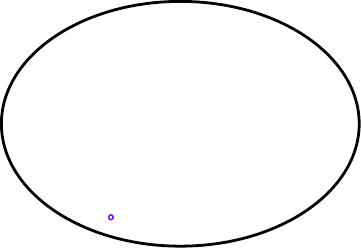}}%
    \put(0.33091768,0.07050596){\color[rgb]{0,0,0}\makebox(0,0)[lt]{\lineheight{1.25}\smash{\begin{tabular}[t]{l}$\tilde{s}$\end{tabular}}}}%
    \put(0,0){\includegraphics[width=\unitlength,page=2]{circleevaluation.pdf}}%
    \put(0.71594793,0.36388073){\color[rgb]{0,0,0}\makebox(0,0)[lt]{\lineheight{1.25}\smash{\begin{tabular}[t]{l}$D^{C^i}$\end{tabular}}}}%
    \put(0.55027132,0.27453523){\color[rgb]{0,0,0}\makebox(0,0)[lt]{\lineheight{1.25}\smash{\begin{tabular}[t]{l}$P^i$\end{tabular}}}}%
    \put(0.48690039,0.52078833){\color[rgb]{0,0,0}\makebox(0,0)[lt]{\lineheight{1.25}\smash{\begin{tabular}[t]{l}$C_i$\end{tabular}}}}%
    \put(0,0){\includegraphics[width=\unitlength,page=3]{circleevaluation.pdf}}%
  \end{picture}%
\endgroup%

\caption{A fixed state circle $C'_i$ and its associated disk $D^{C_i}$ with chosen puncture $P^i$ and other punctures associated to other disks contained in $D^{C_i}$.}
\end{figure}

For the inductive case, we need only to figure out how $\alpha^{\sigma}_Q$ evaluates around the puncture $P^i$. 
    We set the change of coefficients associated to the $\sigma$-puncture $P^{i}$ as below:
    \begin{equation}
    \alpha^{\sigma}_Q(x_{f(i)})=Q \cdot \prod_{j=1}^{\tau(i)}\alpha_Q(x_{g_i(j)})^{-1}.
    \end{equation}
With this definition, we see that the evaluation of $\alpha^{\sigma}_Q$ on variables on all $\sigma$-punctures inside the state disc $D^{C_i}$ including $P^i$ is precisely $Q$, which satisfies the Monodromy Requirement for the state circle $C_i$. Following this procedure to define $\alpha^{\sigma}_Q$ for all the state circles in $S_{\sigma}$, we obtain the change-of-coefficients map $\alpha^{\sigma}_Q$ that satisfies the Monodromy Requirement for all state circles. Extending it by linearity to $\mathbb{
Z}[x_1^{\pm 1}, \ldots, x^{\pm 1}_{8n}]$, we obtain the desired function. 
\end{proof}
\begin{defn}[Generic change of coefficients]\label{specQ}
Fix such a change of coefficients:
 \begin{equation}
      \begin{aligned}
&\alpha^{\sigma}_Q: \ \ \ \ \ \  \Z[x_1^{\pm 1},\ldots,x_{8n}^{\pm 1}] \ \ \ \ \ \rightarrow \ \ \ \ \ \Z[Q^{\pm1}]
\end{aligned}
 \end{equation}
satisfying the conditions and the {\em Monodromy Requirement} from Definition \ref{reqspec}.
\end{defn}

We recall that our homology groups of the covering space are modules over the group ring $\Z[x_1^{\pm 1},...,x_{8n}^{\pm 1}]$ and we have the associated intersection pairing
$$\ll ~,~ \gg: \mathscr H^{lf}_{2n} \otimes \mathscr H_{2n} \rightarrow\Z[x_1^{\pm 1},...,x_{8n}^{\pm 1}] \qquad \text{(Proposition \ref{P:3'''}).}$$
\begin{defn}[$Q$-specialization of coefficients] \label{d.qspecial}
We consider the specializations of these modules using the specialization of coefficients $\alpha^{\sigma}_Q$:
$$\mathscr H^{lf}_{2n}\mid_{\alpha^{\sigma}_Q} \text{ and   }  \ \ \mathscr H_{2n}\mid_{\alpha^{\sigma}_Q}.$$ Then, we have an associated specialized intersection pairing which we denote as below:
\begin{equation}
\ll ~,~ \gg_{\alpha^{\sigma}_Q}: \mathscr H^{lf}_{2n}\mid_{\alpha^{\sigma}_Q} \otimes \mathscr H_{2n}\mid_{\alpha^{\sigma}_Q} \rightarrow \Z[Q^{\pm 1}].
\end{equation}
\end{defn}

\subsection{Specializations for the Jones and HOMFLY-PT polynomials}
\label{ss.specialization}

We have seen that once we fix an indeterminate $Q$, we have a recipe to define a specialization of coefficients that satisfies the Monodromy Requirement, see Definition \ref{d.qspecial}. Now we turn our attention to the invariants that we want to achieve, namely the Jones polynomial and the HOMFLY-PT polynomial, respectively, through state sums. 
In this section, we introduce the specialization of coefficients that we will use for each of these cases. They will both be constructed as $\alpha_Q$ but the only difference is that we will use different indeterminates for the Jones polynomial case and for the HOMFLY-PT polynomial case, as follows.

\begin{defn}[Fixing the specializations]
    Let us define two indeterminates as below:
    \begin{itemize}
        \item $Q_J:=1-q-q^{-1}$ for the Jones polynomial 
        \item $Q_H:=1-\frac{a-a^{-1}}{z}$ associated to the HOMFLY-PT polynomial.
\end{itemize}
\end{defn}
After this, following the procedure described in Section \ref{SS:spec}, we obtain the specializations of coefficients presented below. 
We remark that following the setting used from the previous construction, we have as image for the specialization map the ring of Laurent polynomials in $Q$. In this situation, it means that in the above two cases, which will be associated to $\alpha^{\sigma}_{J}$ and $\alpha^{\sigma}_{H}$, we would have polynomials in the variables $1-q-q^{-1}$ (and $1-\frac{a-a^{-1}}{z}$ respectively) and their inverses. 

However, we will enlarge slightly the ring of coefficients and look at the following specializations. 
\begin{defn}[Specialization of coefficients for the Jones polynomial] \label{specJ}
We have the change of coefficients:
 \begin{equation}
      \begin{aligned}
&\alpha^{\sigma}_J: \Z[x_1^{\pm 1},...,x_{8n}^{\pm 1}] \rightarrow \Z[q^{\pm1}](1-q-q^{-1})
\end{aligned}
 \end{equation}
and the associated specialized intersection pairing:
\begin{equation}
\ll ~,~ \gg_{\alpha^{\sigma}_J}: \mathscr H^{lf}_{2n}\mid_{\alpha^{\sigma}_J} \otimes \mathscr H_{2n}\mid_{\alpha^{\sigma}_J} \rightarrow \Z[q^{\pm 1}](1-q-q^{-1}).
\end{equation}
\end{defn}

\begin{defn}[Specialization of coefficients for the HOMFLY-PT polynomial] \label{specH}
For the case of the HOMFLY-PT polynomial we will use the change of coefficients:
 \begin{equation}
      \begin{aligned}
&\alpha^{\sigma}_H:  \Z[x_1^{\pm 1},...,x_{8n}^{\pm 1}]  \rightarrow \Z[a^{\pm1},z^{\pm1}](1-\frac{a-a^{-1}}{z})
\end{aligned}
 \end{equation}
and the associated specialized intersection pairing:
\begin{equation}
\ll ~,~ \gg_{\alpha^{\sigma}_H}: \mathscr H^{lf}_{2n}\mid_{\alpha^{\sigma}_H} \otimes \mathscr H_{2n}\mid_{\alpha^{\sigma}_H} \rightarrow\Z[a^{\pm1},z^{\pm1}](1-\frac{a-a^{-1}}{z}).
\end{equation}
\end{defn}
The specialization $\alpha^{\sigma}_H$ will be used in Section \ref{S:H} for the topological model of the HOMFLY-PT polynomial.

\section{Recovering the Jones polynomial} \label{s.RecoverJones}

In this part we describe how the intersection pairing with the  specialization of coefficients defined in the previous section recovers the Jones polynomial.  

\subsection{The Kauffman bracket and the Jones polynomial} \label{ss.Joneskb}
Let $L$ be a link in $S^3$ with diagram $D$. For a plane $S^2$ of projection for a diagram of $L$, the Kauffman bracket skein module \cite{Przytycki} $K(S^2)$ is the complex vector space generated by unoriented link diagrams, considered up to planar isotopy and modulo the Kauffman bracket skein relations\footnote{We use the variable substitution $q = -A^{-2}$.}: 

\begin{itemize}
\item the simple unknot $ \vcenter{\hbox{\includegraphics[scale=.125]{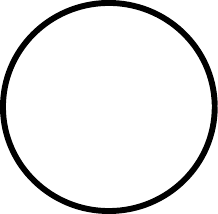}}}= (q + q^{-1})$; 
\item let $D$ be an unoriented link diagram and let $\vcenter{\hbox{\includegraphics[scale=.15]{crossing.pdf}}}$ represent a crossing in $D$, then 
\[ D = D_{\vcenter{\hbox{\includegraphics[scale=.1]{crossing.pdf}}}} =  \sqrt{-q^{-1}}D_+  + \sqrt{-q} D_{-}. \]
 
Here $D_+$ and $D_-$ are link diagrams that are identical to $D=D_{\vcenter{\hbox{\includegraphics[scale=.1]{crossing.pdf}}}}$ everywhere outside of a small neighborhood of the crossing $\vcenter{\hbox{\includegraphics[scale=.15]{crossing.pdf}}}$, and they differ from $D$ in the small neighborhood as indicated in Figure \ref{f.kb}. 
\begin{figure}[H]
\centering
\def \svgwidth{.2\columnwidth}
\begingroup%
  \makeatletter%
  \providecommand\color[2][]{%
    \errmessage{(Inkscape) Color is used for the text in Inkscape, but the package 'color.sty' is not loaded}%
    \renewcommand\color[2][]{}%
  }%
  \providecommand\transparent[1]{%
    \errmessage{(Inkscape) Transparency is used (non-zero) for the text in Inkscape, but the package 'transparent.sty' is not loaded}%
    \renewcommand\transparent[1]{}%
  }%
  \providecommand\rotatebox[2]{#2}%
  \newcommand*\fsize{\dimexpr\f@size pt\relax}%
  \newcommand*\lineheight[1]{\fontsize{\fsize}{#1\fsize}\selectfont}%
  \ifx\svgwidth\undefined%
    \setlength{\unitlength}{331.96241484bp}%
    \ifx\svgscale\undefined%
      \relax%
    \else%
      \setlength{\unitlength}{\unitlength * \real{\svgscale}}%
    \fi%
  \else%
    \setlength{\unitlength}{\svgwidth}%
  \fi%
  \global\let\svgwidth\undefined%
  \global\let\svgscale\undefined%
  \makeatother%
  \begin{picture}(1,0.32719076)%
    \lineheight{1}%
    \setlength\tabcolsep{0pt}%
    \put(0,0){\includegraphics[width=\unitlength,page=1]{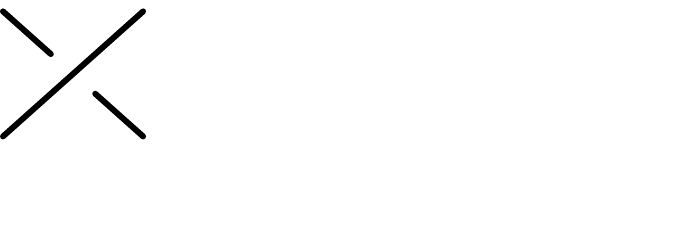}}%
    \put(0.0448068,0.00691907){\color[rgb]{0,0,0}\makebox(0,0)[lt]{\lineheight{1.25}\smash{\begin{tabular}[t]{l}$D_{\vcenter{\hbox{\includegraphics[scale=.1]{crossing.pdf}}}}$\end{tabular}}}}%
    \put(0,0){\includegraphics[width=\unitlength,page=2]{Kauffman_bracket.pdf}}%
    \put(0.42293414,0.0107798){\color[rgb]{0,0,0}\makebox(0,0)[lt]{\lineheight{1.25}\smash{\begin{tabular}[t]{l}$D_+$\end{tabular}}}}%
    \put(0.83864406,0.0107798){\color[rgb]{0,0,0}\makebox(0,0)[lt]{\lineheight{1.25}\smash{\begin{tabular}[t]{l}$D_-$\end{tabular}}}}%
  \end{picture}%
\endgroup%

\caption{\label{f.kb} The resolutions at each crossing.}
\end{figure}
\end{itemize}
\begin{rem}
We remark that we are not using the standard normalization for the skein relation. 
\end{rem}

The \textit{Kauffman bracket} $\langle D \rangle$ is the Laurent polynomial in $q$ multiplying the empty diagram obtained after removing every crossing of $D$ and closed circle components using the Kauffman bracket skein relations. The writhe $w(D)$ of an oriented link diagram $D$ is the sum of the signs of the positive $\vcenter{\hbox{\includegraphics[scale=.15]{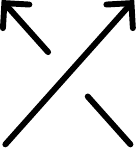}}}$ and negative $\vcenter{\hbox{\includegraphics[scale=.15]{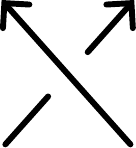}}}$ crossings of $D$.  We give a definition of the (unreduced) Jones polynomial \cite{Lickorish} using the Kauffman bracket. 
\begin{defn} \label{d.jones}
    Let $D$ be the diagram of an oriented link $L$ with orientation forgotten. The Jones polynomial $J_L(q)$ of the link $L$ is given by 
    \begin{equation} J_L(q) =  (-1)^{-3w(D)}(-q)^{\frac{3w(D)}{2}} \langle D \rangle.  
    \end{equation}
    \end{defn}

Given a Kauffman state $\sigma$, let $n_+(\sigma)$ be the number of $+$-resolutions chosen by $\sigma$ and $n_-(\sigma)$ be the number of $-$-resolutions chosen by $\sigma$. Define $\sgn(\sigma):= n_+(\sigma) - n_-(\sigma)$. Recall that a Kauffman state chooses the $+$- or the $-$-resolution at each crossing of a link diagram $D$ (Definition \ref{d.Kauffmanstate}). Applying a Kauffman state $\sigma$ to a link diagram $D$ means that we replace every crossing of $D$ by the $\alpha'$ arcs of the resolution chosen by $\sigma$, that results in a disjoint collection of circles $S_{\sigma}$. Let $|\sigma|$ be the number of circles in the collection. The Kauffman bracket $\langle D\rangle$ can be defined in terms of a \emph{state sum}: 
\begin{equation} \langle D \rangle = \sum_{\sigma \text{ a Kauffman state on $D$}} (-q)^{\frac{-\sgn(\sigma)}{2}}(q+ q^{-1})^{|\sigma|} . \end{equation}
Therefore, 
\begin{equation} \label{e.bracketJones} J_L(q) =  (-1)^{-3w(D)} \sum_{\sigma \text{ a Kauffman state on $D$}} (-q)^{\frac{-\sgn(\sigma)+3w(D)}{2}}(q+ q^{-1})^{|\sigma|}. 
    \end{equation}
We remark that the writhe $w(D)$ already has a geometric meaning as the self-intersection number of the link diagram.

\begin{defn}[Collections of arcs and ovals] \label{d.FandL}
Let us suppose that we have a fixed Kauffman state $\sigma$. Recall the bi-coloring on the state circles as defined in Section \ref{ss.intersectionpts} and illustrated in Figure \ref{f.statecolor}. 
We recall that we have constructed in Definition \ref{d.basepoints} our collection of base points, which comes from a choice of an alternating diagram $D'$ from $D$ and the condition that at every crossing we choose the base points diagonally on the over strand of $D'$.

For our topological model, we keep the set of blue arcs, but we replace the black arcs by green ovals, which are constructed in a tubular neighborhood of these arcs, as in Figure \ref{f.constarsovals}.

We construct the ovals such that if they are near a base point then they pass through that base point, as presented in Figure \ref{f.constarsovals}. We recall that locally, at each crossing, our base points are chosen near the punctures. Since the ovals are obtained from the $\alpha'$ arcs also by a deformation, we can choose them such that they pass through the base points. For the collection of $\alpha'$ arcs, we notice that they also contain the set of base points after a small isotopy. 
\begin{figure}[H]
\def\svgwidth{.7\columnwidth}
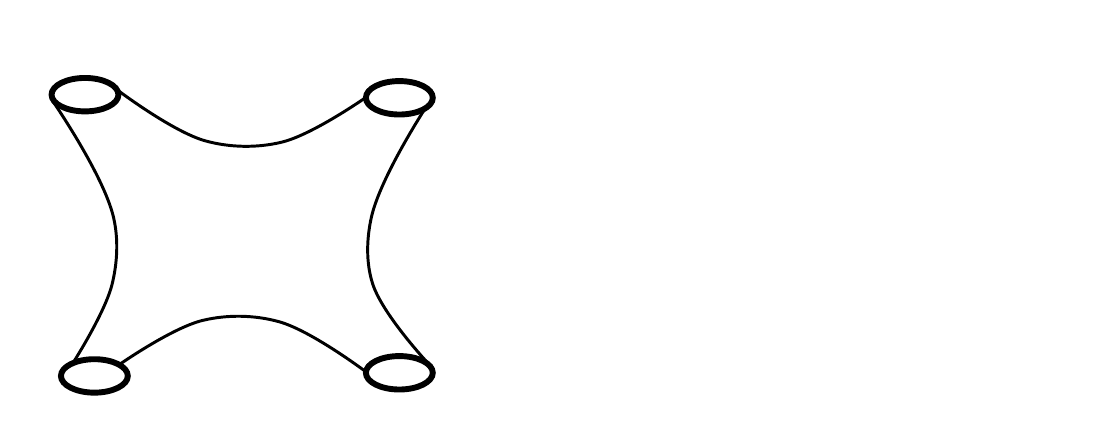
\caption{\label{f.constarsovals} Construction of arcs and ovals.}
\end{figure}

With this data, we denote:
\begin{enumerate}
\item[$\bullet$] $F(\sigma)$ to be the set of blue arcs associated to the state, and 
\item[$\bullet$] $L(\sigma)$ to be the set of green ovals constructed from the black arcs associated to our state.
\end{enumerate}
\end{defn}

With respect to our choice of  base points, we show the following:

\begin{lem}[Base points for our arcs and ovals]\label{baseptscol} Each blue arc from the collection $F(\sigma)$ contains exactly one base point and each green oval from  $L(\sigma)$ contains exactly one base point from the set $$\{b_1,...,b_{2n}\}.$$ 
\end{lem}
\begin{proof}
    First of all, at each crossing, we have chosen the two base points diagonally on the over strand at each crossing of the alternating projection $D'$ from $D$. Thus, each blue $\alpha'$ arc from the choice of a $+$-or $-$-resolution contains exactly one point from the set $\{b_1,...,b_{2n}\}$.  So each arc from the collection $F(\sigma)$ contains exactly one base point.

    For the ovals, we recall that they come from the link strands between crossings of $D$, that are colored black. Since in Definition \ref{d.basepoints} we took care that when we travel the link ignoring the type of crossing we alternate with the choice of base points (utilizing the property that $D'$ is alternating, so the over strand alternates), this means that each black arc is adjacent to exactly one of our chosen base points. Correspondingly, each oval (that is given by a tubular neighborhood of a black arc) passes through exactly one of the base points $$\{b_1,...,b_{2n}\}.$$
\end{proof}

\begin{figure}[H]
\def\svgwidth{.7\columnwidth}
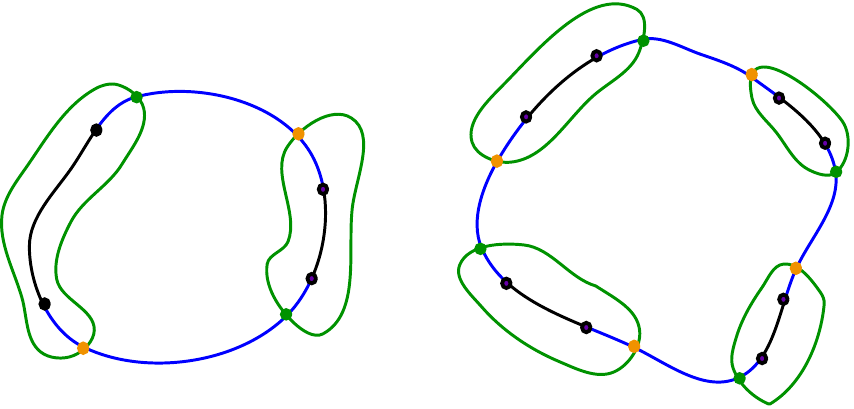
\caption{\label{f.arcsovals} Green ovals replacing black arcs.}
\end{figure} 

\begin{defn}[Orientation] We  fix an orientation for our arcs and ovals. First, we orient all the ovals such that their orientations are consistent with the counter-clockwise orientation. For the arcs, we orient them such that each pair that comes from a resolution has the orientation as in Figure \ref{f.arcsovalso}.    

\begin{figure}[H]
\def\svgwidth{.7\columnwidth}
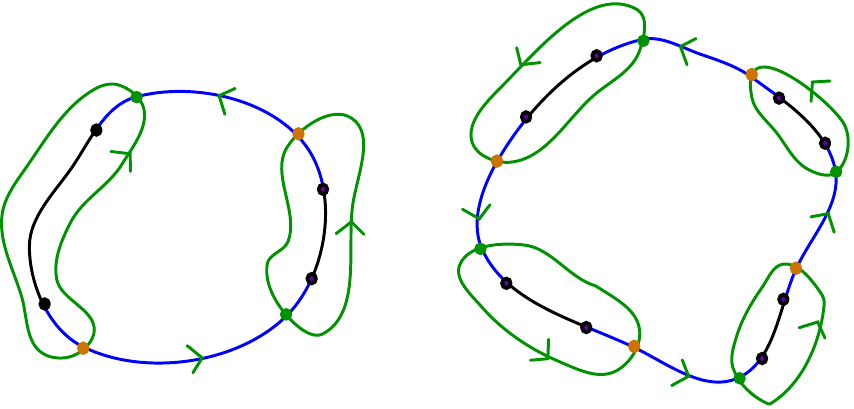
\caption{\label{f.arcsovalso} Orientations on $F(\sigma), L(\sigma)$.}
\end{figure}

Now we fix an orientation for our surface $\Sigma'$ such that the base points from Figure \ref{f.arcsovalso} have all positive orientation.
\end{defn}

\subsection{Homology classes associated to a state}
\label{ss.homologyfromstate}

The next part of our construction is devoted to the definition of two homology classes $\mathscr{F}(\sigma) \in \mathscr H^{lf}_{2n}, \mathscr{L}(\sigma) \in \mathscr H_{2n}$ coming from the two collections of arcs and ovals $F(\sigma), L(\sigma)$ associated to a fixed Kauffman state $\sigma$.  
By Lemma \ref{baseptscol}, to $2n$ base points there are exactly $2n$ arcs in $F(\sigma)$ and $2n$ arcs in $L(\sigma)$.

\noindent {\bf General method for obtaining homology classes.}

One can obtain homology classes from the following data:
\begin{itemize}
\item 

A {\em geometric support}, given by a {\em collection of arcs or ovals in the punctured disk}. Then, the image of the $2n$ product of these arcs or ovals in the configuration space $C_{2n}(\Sigma')$ leads to a submanifold (which has half of the dimension of the configuration space, so dimension $2n$). 
\item A collection of {\em paths to the base point}, which start in the base points $\{b_1,...,b_{2n}\}$ from the punctured surface  and end on these curves. The set of all these paths leads to a path in the configuration space, from the base point $\bf d$ to the submanifold. 
\end{itemize}

Then, the procedure is to lift the path to a path in the covering of the configuration space, that starts from $\tilde{\bf{d}}$ (which is our fixed lift of the base point $\bf d$ by Definition \ref{d.dlift}). After that, we lift the submanifold through the end point of this path. The detailed construction of such homology classes is presented in \cite{Anghel-cjp}.

\begin{rmk}
    
    In our situation, following Lemma \ref{baseptscol}, the collection of arcs $F(\sigma)$ and the collection of ovals $L(\sigma)$ already contain the set of base points $$\{b_1,...,b_{2n}\}.$$ This means that each collection of curves leads to a submanifold in the configuration space $C_{2n}(\Sigma')$ that passes through the base point:
$${\bf d}=\{b_1,...,b_{2n} \}.$$
So, in this case we lift directly these two submanifolds in the covering towards two submanifolds that pass through the base point $\tilde{\bf{d}}$. 
\end{rmk}
In the next part, we use the homology classes coming from these submanifolds, as below.

\begin{defn}[Homology classes for the Jones polynomial]\label{classesJ}
    
With this procedure the collection of curves $F(\sigma)$ leads to a homology class in the covering, which we denote as $$\mathscr F(\sigma)\in \mathscr H^{lf}_{2n}.$$

Dually the collection of ovals $L(\sigma)$ leads to a homology class in the covering, which we denote as $$\mathscr L(\sigma)\in \mathscr H_{2n} \ \ (\text{Figure} \ \ref{f.arcsovalso}).$$
\end{defn}

\subsection{Intersection of classes recovers the evaluation of state circles}

\

In this part we  put all the ingredients together and show that through the topological tools that we introduced above we can obtain the evaluation of the Kauffman bracket on a set of state circles.

\

\paragraph{\textbf{Intersection for generic parameter Q}}

We first show that for a given Kauffman state $\sigma$, the intersection between the associated homology classes recovers a polynomial raised to the power given by the number of circles $|\sigma|$, as below.  

\begin{lem}[Recovering the Kauffmann bracket in a general form]\label{intgeneral}
We work in the context where we have an indeterminate $Q$ and the associated specialization $\alpha^{\sigma}_Q$.  Fix a state $\sigma$. Then, the intersection between the classes $\mathscr 
F(\sigma)$ and $\mathscr 
L(\sigma)$ specialized through $\alpha^{\sigma}_Q$ gives a fixed polynomial in $Q$ to the number of circles, as below:
\begin{equation}
    \ll \mathscr F(\sigma), \mathscr L(\sigma) \gg_{\alpha^{\sigma}_Q}=(1-Q)^{|\sigma|}.
\end{equation}    
\end{lem}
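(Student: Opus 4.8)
The strategy is to compute $\ll \mathscr F(\sigma), \mathscr L(\sigma)\gg$ via the explicit formula of Proposition \ref{formint}, as the signed sum $\sum_{x\in X_1\cap X_2}\alpha_x\cdot\Phi(l_x)$ over the transverse intersection points of the geometric support $X_1$ of $\mathscr F(\sigma)$ (the image in $C_{2n}(\Sigma')$ of the product of the $2n$ blue arcs) and the support $X_2$ of $\mathscr L(\sigma)$ (the product of the $2n$ green ovals), and then to apply the ring homomorphism $\alpha^\sigma_Q$. First I would analyse the geometry on $\Sigma'$. Since the state circles $C_1,\dots,C_{|\sigma|}$ of $S_\sigma$ are pairwise disjoint and each green oval lies in a thin tubular neighbourhood of a black arc of a single state circle, a blue arc and a green oval belonging to different state circles are disjoint. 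Around a fixed $C_i$, traversing it one meets alternately the blue arcs $f^i_1,\dots,f^i_{m_i}$ (from the crossing resolutions) and the green ovals $o^i_1,\dots,o^i_{m_i}$ (replacing the black arcs), and each $f^i_j$ meets exactly the two ovals adjacent to it along $C_i$; thus the incidence pattern of arcs and ovals on $C_i$ is a $2m_i$-cycle (a bigon when $m_i=1$). By Remark \ref{r.bicolored}, $\sum_i m_i=2n$, so this accounts for all $2n$ arcs and all $2n$ ovals.

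Next I would enumerate the points of $X_1\cap X_2$. Such a point selects, for each blue arc, a point lying also on some green oval so that the $2n$ chosen ovals are pairwise distinct --- that is, a perfect matching of the incidence graph together with a choice of surface intersection point for each matched pair. A $2m_i$-cycle has exactly two perfect matchings (the two alternating ones), so the points of $X_1\cap X_2$ are indexed by $\prod_{i=1}^{|\sigma|}\{A_i,B_i\}$, giving $2^{|\sigma|}$ of them; moreover, choosing the reference paths to the base point and the loops $l_x$ so that the particles near $C_i$ stay near $C_i$, both the sign $\alpha_x$ and the monodromy $\alpha^\sigma_Q(\Phi(l_x))$ factor as products over the state circles. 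For the ``trivial'' alternating matching at $C_i$ the associated local loops are null-homologous and the local sign is $+1$, so it contributes the factor $1$; for the other matching the difference with the first is, up to sign, a loop wrapping once around $C_i$, hence encircling precisely the $\sigma$-punctures contained in the state disk $D^{C_i}$, and by the \emph{Monodromy Requirement} of Definition \ref{reqspec} the product of these monodromies is sent by $\alpha^\sigma_Q$ to $Q$, while the orientation conventions of Figure \ref{f.arcsovalso} produce an overall sign $-1$, so it contributes $-Q$. Multiplicativity over the circles then gives
\[
\ll \mathscr F(\sigma),\mathscr L(\sigma)\gg_{\alpha^\sigma_Q}=\prod_{i=1}^{|\sigma|}\bigl(1+(-Q)\bigr)=(1-Q)^{|\sigma|}.
\]

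The hard part will be the last step: checking that for each state circle the non-trivial matching carries sign exactly $-1$, independently of the length $m_i$ of the cycle (which requires carefully tracking the orientations fixed in Figure \ref{f.arcsovalso} and the parity of the permutation relating the two alternating matchings), and that its associated loop evaluates under $\Phi$ to exactly the product of the monodromies around the $\sigma$-punctures inside $D^{C_i}$ and nothing more --- this is precisely the situation for which $\alpha^\sigma_Q$ was designed in Definition \ref{reqspec}. A secondary point is to verify that the ovals, being genuine closed curves, do not create spurious intersections (a meeting with a non-adjacent blue arc, or a doubled meeting with an adjacent one), which is arranged by taking the tubular neighbourhoods thin enough.
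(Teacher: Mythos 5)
Your proposal follows essentially the same route as the paper's proof: decompose the intersection points state circle by state circle, observe that each circle admits exactly two (alternating) choices, evaluate the trivial one to $+1$ and the nontrivial one to $-Q$ via the Monodromy Requirement, and multiply. The one step you defer --- that the nontrivial matching carries sign exactly $-1$ independently of $m_i$ --- is resolved in the paper precisely as you anticipate: the local orientations contribute $(-1)^{m_i}$ and the induced cyclic permutation of the $m_i$ supports contributes $(-1)^{m_i-1}$, whose product is $-1$.
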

\begin{proof}
We will use the formula for computing the intersection. 

\

\noindent{\bf Step 1- Reducing the problem to the case of a single state circle}

\

Consider the geometric supports of the homology classes $\mathscr F(\sigma)$ and $\mathscr L(\sigma)$: they are given by  lifting the collection of arcs $F(\sigma)$ and the collection of ovals $ L(\sigma)$ to the configuration space, as described in Section \ref{ss.homologyfromstate}. We remark that these two collections of arcs and ovals on the surface contain the set of state circles associated to $\sigma$. The intersection between $\mathscr F(\sigma)$ and $\mathscr L(\sigma)$ is parameterized by the set of intersection points between their geometric supports, and then graded by the local system. 

Consider the set of intersection points. Such a point is encoded by a $2n$-tuple of points $z=(z_1,...,z_{2n})$ on the surface $\Sigma'$ such that:
\begin{enumerate}
    \item each arc from $F(\sigma)$ contains exactly one component from the set $\{z_1,...,z_{2n}\}$;
    \item each oval from $L(\sigma)$ contains exactly one component from the set $\{z_1,...,z_{2n}\}$.
\end{enumerate}

Note the above requirement gives conditions on the intersection points between the collections of arcs and ovals that contain the same state circle. So, in order to have an intersection point $z=(z_1,...,z_{2n})$ we should choose for each state circle $C_i$ (which is bi-colored into let's say $2m_i$ components) $m_i$ points on the punctured surface $(z^i_1,...,z^i_{m_i})$ with the following requirement:
\begin{enumerate}
    \item each arc from $F(\sigma)$ bounding $C_i$ contains exactly one component from the set $\{z^i_1,...,z^i_{m_i}\}$;
    \item each oval from $L(\sigma)$ bounding $C_i$ contains exactly one component from the set $\{z^i_1,...,z^i_{m_i}\}$.
\end{enumerate}

Then our intersection point $z$ will be given by the collection of chosen points $z^i=(z^i_1,...,z^i_{2m_i})$ associated to each state circle: $$\{z^i_1,...,z^i_{2m_i}\mid 1\leq i \leq | \sigma| \  \}.$$
Now, for our intersection, by Proposition \ref{formint} we have to compute two parts:
\begin{enumerate}[i)]
\item the loop $l_{z}$ (Definition \ref{d.associatedloop}) and its evaluation through the local system $\Phi$;
\item the sign $\alpha_z$.
\end{enumerate}

The sign will be the product of signs associated to each component from the set of state circles. 

The main remark that is used for the next step is that given the property that the intersection point $z$ has its components separated into components associated to the state circles, the loop $l_{z}$ and then its evaluation through the local system are both obtained from separate components associated to each state circle separately.
For $\Phi(l_{z})$ we compute the contribution of each loop associated to a state circle $l_{z^i}$ and consider the product of all of these when we evaluate the monodromies around the punctures through the local system and then the specialization. 

\

\noindent {\bf Step 2- Intersection points associated to one state circle}

\

As explained in Step 1, the intersection between our two homology classes can be computed by investigating each individual state circle and computing its contribution to the monodromy of the local system, as below.

\begin{lem}[Intersection points along a closed component] Let us fix a state circle $C_i$. Then, there are exactly two choices for the intersection points associated to this component: ${B}^i=(b^i_1,...,b^i_{m_i})$ or $\bar{B}^i=(\bar{b}^i_1,...,\bar{b}^i_{m_i})$.
\end{lem}

\begin{proof}
First, we look at the components of the base point $\bf d$ that belong to this state circle and denote them as: 
$(b^i_1,...,b^i_{m_i}).$

Let us denote by:
\begin{itemize}
    \item $F^i(\sigma)$  the set of arcs from $F(\sigma)$ bounding $C_i$, and 
 \item $L^i(\sigma)$  the set of ovals from $L(\sigma)$ bounding $C_i$.
\end{itemize}

\

\noindent {\bf Requirement for intersection points}\\
We have seen that an intersection point associated to a state circle $C_i$ is given by a choice of $2m_i$ points on the punctured surface $(z^i_1,...,z^i_{m_i})$ with the following requirement:
\begin{enumerate}
    \item each arc from $F^i(\sigma)$ contains exactly one component from the set $\{z^i_1,...,z^i_{m_i}\}$;
    \item each oval from $L^i(\sigma)$ contains exactly one component from the set $\{z^i_1,...,z^i_{m_i}\}$.
\end{enumerate}
We remark that we have a set of special points that belong to our collection of arcs and ovals:
$$(b^i_1,...,b^i_{m_i}).$$ Following our construction, we remark that these base points, when following the state circle, are chosen at intervals at step of length $2$, as illustrated in Figure \ref{f.arcsovalso}. This means that $${B}^i=(b^i_1,...,b^i_{m_i})$$ provides a well defined intersection point that satisfies the above {\bf Requirement for intersection points}. 

Now, we investigate which other intersection points satisfy this requirement.
Consider the oval containing $b^i_1$. If for this oval we do not choose the intersection point $b^i_1$, we are obliged to choose the other point at the intersection with the next arc. Let us denote this point by $\bar{b}^i_1$. Then, we look at the next oval following the orientation, containing the base point $b^i_2$. We have to choose a component belonging to this oval as well. It cannot coincide with $b^i_2$, since $b^i_2$ and $b^i_1$ belong to the same arc. So we have to choose the next point, which we denote by $\bar{b}^i_2$. 
Following this inductive procedure, we see that all components are prescribed by the choice of the point $\bar{b}^i_1$, and so we have a unique choice for a second intersection point $$\bar{B}^i=(\bar{b}^i_1,...,\bar{b}^i_{m_i})$$ that satisfies the above requirement. 
Overall, we see that we have exactly two intersection points: one given by the components close to the base points $B^i=(b^i_1,...,b^i_{m_i})$, and a second one whose components are given by the other possible situation $\bar{B}^i=(\bar{b}^i_1,...,\bar{b}^i_{m_i})$.
\end{proof}

\begin{figure}[H]
\def\svgwidth{.7\columnwidth}
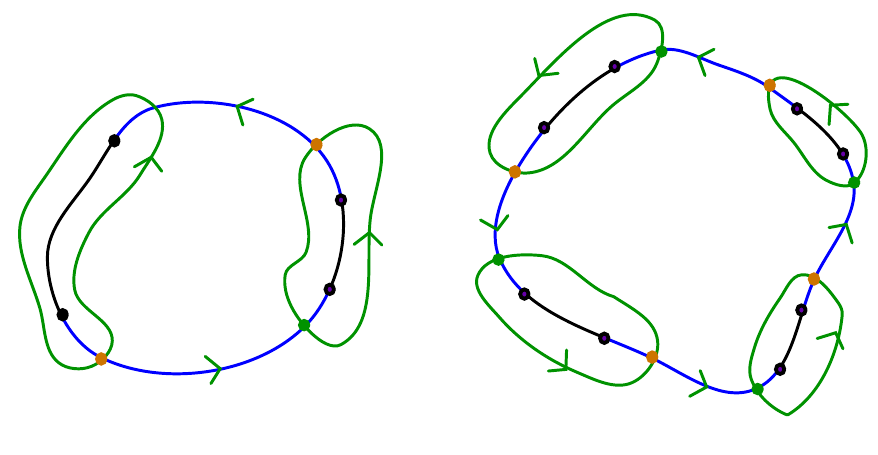
\caption{\label{f.loopx} A loop $l_x$ (in yellow) in the configuration space.}
\end{figure}

\noindent{\bf Step 3- Computing the contribution of the loop associated to one state circle}

\

For this step we aim to compute the contribution of the intersection points $B^i$ or $\bar{B}^i$ once we evaluate the local system and specialize it through $\alpha^{\sigma}_Q$ (Definition \ref{d.qspecial}). For this, we have to see which loops are associated to these intersection points. 

\noindent {\bf a) Intersection point close to the base point}

If we have chosen the components of $B
^i$, then the loop associated to it is a constant loop whose components are the base points $$b^i_1,...,b^i_{m_i}.$$ We remark that this loop does not wind around any puncture, so the contribution of the intersection point $B^i$ to the local system gets evaluated to $1$.

\noindent{\bf b) Intersection point opposite to the base point}

If we have chosen the components of $\bar{B}^i$, then the loop associated to it starts in the base points $$(b^i_1,...,b^i_{m_i-1},b^i_{m_i}),$$ continues on the ovals to the components $$\bar{b}^i_1,...,\bar{b}^i_{m_i}$$ and goes back to the ``next'' base points following the arcs, arriving in
$$(b^i_2,...,b^i_{m_i},b^i_1).$$
We remark that this is a loop in the configuration space whose image on the  
surface is, up to a little isotopy precisely the state circle $C_i$. So, its contribution to the local system gets evaluated to the product of monodromies of punctures associated to the state disc $D^{C_i}$. Following the definition of the change of coefficients $\alpha^{\sigma}_Q$, this product is precisely $Q$, since $\alpha^{\sigma}_Q$ was defined exactly such that it satisfies the {\bf Monodromy Requirement} in Section \ref{SS:spec}.  

We conclude that the contribution to the local system of the intersection point $\bar{B}^i$ gets evaluated to $Q$.

\

\noindent {\bf Step 4- Sign contribution of intersection points associated to a state circle}

\

For each intersection point $z$ in the configuration space, we have to compute the sign of the intersection between the submanifolds given by $F(\sigma)$ and $L(\sigma)$ at $z$. This sign is given by:
\begin{enumerate}
    \item the product of the sign of local orientations at the components of $z$ on the surface;
\item the sign of the permutation induced by this intersection point on the components of the geometric supports: $F(\sigma)$ and $L(\sigma)$.
\end{enumerate}

\noindent {\bf a) Sign contribution of the intersection point $B^i$}

For the point $B^i$ we remark that all intersections between arcs and ovals have positive local orientations and also there is no permutation induced at the level of the set of arcs and ovals. So this point has a sign contribution of $$+1.$$

\noindent {\bf b) Sign contribution of the intersection point  $\bar{B}^i$ }

Looking at the second intersection point $\bar{B}^i$, we remark that all intersections between arcs and ovals have negative local orientations. This gives the sign:
$$(-1)^{m_i}.$$
Also the permutation induced by its components can be seen by checking the permutation induced by the associated loop on the base points. This, as we have seen, is a cyclic permutation of order $m_i$, which has the signature $(-1)^{m_i-1}$. So, the sign coming from the permutation associated to $\bar{B}^i$ is:
$$(-1)^{m_i-1}.$$
Overall, the point $\bar{B}^i$ carries the sign:
$$(-1)^{m_i-1} \cdot (-1)^{m_i}=-1.$$

\noindent {\bf Step 5- Contribution of intersection points associated to one circle to the intersection form}

Overall for our two points $B^i$ and $\bar{B}^i$ we obtain the following contributions:
\begin{enumerate}
    \item 
$B^i$: contributes to the local system evaluation by $1$ and carries the sign
$1$.
    \item  $\bar{B}^i$: contributes to the local system evaluation by $-Q$ and carries the sign
$-1$.
\end{enumerate}
This shows that the component $C_i$ gives two possible choices for components of intersection points, and each of them gets weighted by the monomials presented above. This means that this component contributes to the intersection by the sum of these two weights, which is precisely:
\begin{equation}
    1+(-1) \cdot Q=1-Q.
\end{equation}
Doing this procedure for all state circles, and we have $| \sigma |$ of them, we obtain that our intersection is indeed: 
\begin{equation}
    \ll \mathscr{F}(\sigma), \mathscr{L}(\sigma) \gg_{\alpha^{\sigma}_Q}=(1-Q)^{|\sigma|}.
\end{equation}    
which concludes the proof of the Lemma.

\end{proof}

Now we are ready to put together all the previous set-up and show that we have defined the right topological model, provided by the set of graded intersections of homology classes which, for the appropriate choice of specialization of coefficients, lead to the Jones polynomial and to the HOMFLY-PT polynomial, respectively.

\subsection{Intersection form recovering the evaluation on closed components} \label{ss.intJoneseval}
Recall that in Definition \ref{specJ} we have introduced the change of coefficients  
 \begin{equation}
      \begin{aligned}
&\alpha^{\sigma}_{J}: \Z[x_1^{\pm 1},...,x_{8n}^{\pm 1}] \rightarrow \Z[q^{\pm1}](1-q-q^{-1}).
\end{aligned}
 \end{equation}
This gives the associated specialized intersection pairing:
\begin{equation}
\ll ~,~ \gg_{\alpha^{\sigma}_{J}}: \mathscr H^{lf}_{2n}\mid_{\alpha^{\sigma}_{J}} \otimes \mathscr H_{2n}\mid_{\alpha^{\sigma}_{J}} \rightarrow\Z[q^{\pm 1}](1-q-q^{-1}).
\end{equation}

\begin{lem}[Recovering the Kauffman bracket via intersections of arcs and ovals]\label{l.intJ}
Fix a state $\sigma$. Then, the intersection between the classes $\mathscr 
F(\sigma)$ and $\mathscr 
L(\sigma)$ specialized through $\alpha^{\sigma}_{J}$ gives precisely the Kauffman bracket evaluated on the state $\sigma$:
\begin{equation}
    \ll \mathscr F(\sigma), \mathscr L(\sigma) \gg_{\alpha^{\sigma}_{J}}=(q+q^{-1})^{|\sigma|}.
\end{equation}    
\end{lem}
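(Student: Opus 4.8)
The plan is to deduce this immediately from Lemma \ref{intgeneral}, which already computes the specialized intersection pairing $\ll \mathscr F(\sigma), \mathscr L(\sigma) \gg_{\alpha^{\sigma}_Q}=(1-Q)^{|\sigma|}$ for an arbitrary indeterminate $Q$. First I would recall that, by construction in Section \ref{ss.specialization}, the change of coefficients $\alpha^{\sigma}_J$ of Definition \ref{specJ} is precisely the generic map $\alpha^{\sigma}_Q$ from Definition \ref{specQ} evaluated at $Q = Q_J := 1-q-q^{-1}$; equivalently, $\alpha^{\sigma}_J$ factors as $\alpha^{\sigma}_Q$ followed by the ring homomorphism $\Z[Q^{\pm 1}] \to \Z[q^{\pm 1}](1-q-q^{-1})$, $Q \mapsto 1-q-q^{-1}$ (after the slight enlargement of coefficient ring noted in the text). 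Since the intersection pairing is produced from the local system by the fixed formula of Proposition \ref{formint}, it is natural with respect to the change of coefficient ring, so post-composing with this ring homomorphism commutes with taking the pairing. Hence
\begin{equation*}
\ll \mathscr F(\sigma), \mathscr L(\sigma) \gg_{\alpha^{\sigma}_{J}}
= \Bigl( \ll \mathscr F(\sigma), \mathscr L(\sigma) \gg_{\alpha^{\sigma}_{Q}} \Bigr)\Big|_{Q = 1-q-q^{-1}}.
\end{equation*}

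Next I would simply substitute the value from Lemma \ref{intgeneral}:
\begin{equation*}
\Bigl( (1-Q)^{|\sigma|} \Bigr)\Big|_{Q = 1-q-q^{-1}}
= \bigl( 1 - (1-q-q^{-1}) \bigr)^{|\sigma|}
= (q+q^{-1})^{|\sigma|},
\end{equation*}
which is exactly the claimed value, and which is the factor $(q+q^{-1})^{|\sigma|}$ appearing in the Kauffman bracket state sum \eqref{e.bracketJones}. This completes the argument.

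There is essentially no genuine obstacle here: all the work has been done in Lemma \ref{intgeneral}. The only point meriting a sentence of care is the compatibility of the two descriptions of $\alpha^{\sigma}_J$ — that computing the pairing over $\Z[x_1^{\pm 1},\dots,x_{8n}^{\pm 1}]$ and then specializing $Q \mapsto 1-q-q^{-1}$ agrees with computing it directly over $\Z[q^{\pm 1}](1-q-q^{-1})$ — which holds because $\alpha^{\sigma}_Q$ is a ring homomorphism and the pairing of Proposition \ref{P:3'''} commutes with base change of the coefficient ring. I would state this and move on rather than belabor it.
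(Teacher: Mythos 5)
Your proposal is correct and is essentially identical to the paper's own proof: both simply invoke Lemma \ref{intgeneral} with $Q = Q_J = 1-q-q^{-1}$ and compute $1-(1-q-q^{-1}) = q+q^{-1}$. The extra sentence you add about compatibility of the pairing with the change of coefficient ring is a reasonable (if implicit in the paper) point of care, but it does not change the argument.
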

\begin{proof}
Following Lemma \ref{intgeneral}, we have that:
\begin{equation}
    \ll \mathscr F(\sigma), \mathscr L(\sigma) \gg_{\alpha^{\sigma}_{Q}}=(1-Q)^{|\sigma|}.
\end{equation}
In our case we have $Q=Q_J=1-q-q^{-1}$ and so this leads to our desired formula for the intersection:
\begin{equation}
    \ll \mathscr F(\sigma), \mathscr L(\sigma) \gg_{\alpha^{\sigma}_{J}}=(q+q^{-1})^{|\sigma|}.
\end{equation}
\end{proof}

\subsection{A topological model for the Jones polynomial} \label{ss.topmodelJones}

In this section, we obtain a topological model of the Jones polynomial by defining $\Theta_J$ and proving Theorem \ref{t.main2}. To define $\Theta_J$, recall the Jones polynomial of an oriented link may be defined in terms of the Kauffman bracket (Definition \ref{d.jones}) of a diagram $D$ of the link with orientation forgotten: 
\begin{equation} 
J_L(q) =  (-1)^{3w(D)} \sum_{\sigma \text{ a Kauffman state on $D$}} (-q)^{\frac{-\sgn(\sigma)+3w(D)}{2}}(q+ q^{-1})^{|\sigma|}.
\label{e.Jonesstate}\end{equation}
Since Lemma \ref{intgeneral} recovers $(q+ q^{-1})^{|\sigma|}$ as the intersection pairing $\ll ~,~ \gg_{\alpha^{\sigma}_{J}}$ for each Kauffman state $\sigma$, it remains to give a geometric interpretation of the monomial $q^{\frac{-\sgn(\sigma)+3w(D)}{2}}$.

\begin{defn}[Twist of the $\alpha'$-curves] \label{d.twistedcurvesJ}
 At each crossing $\chi$, we distinguish a pair of the $\alpha'$ curves by whether they give the oriented resolution or the non-oriented resolution. They are denoted by $\alpha_o(\chi)$ for the oriented resolution, $\alpha_{\hat{o}}(\chi)$ for the non-oriented resolution, respectively. Let $\sigma$ be a Kauffman state on a diagram $D$ of $L$, we twist the $\alpha'$ arcs by $\sigma$ as follows.

\paragraph{\textbf{If $\chi$ is positive:}} If $\sigma$ chooses the $-$-resolution at the crossing, then replace by the twisted $\alpha_{o}$-arcs as indicated in Figure \ref{f.ct}. If $\sigma$ chooses the $+$-resolution, then replace by the $\alpha_{\hat{o}}$-arcs (without twisting). 
\paragraph{\textbf{If $\chi$ is negative:}}
If $\sigma$ chooses the $+$-resolution at the crossing, then replace by the twisted $\alpha_{o}$-arcs as indicated in Figure \ref{f.ct}. If $\sigma$ chooses the $-$-resolution, then replace by the  $\alpha_{\hat{o}}$-arcs (without twisting).

\begin{defn}[Twisted curves $D_\sigma(\alpha)$]
    \label{d.dtbystate}
We denote the set of curves resulting from this twisting by $D_\sigma(\alpha)$. See Figure \ref{f.ct}. In the figure, a dashed red line is in the back, and a solid red line is in the front. 
\end{defn}

\begin{figure}[H]
\def \svgwidth{.9\columnwidth}
\begingroup%
  \makeatletter%
  \providecommand\color[2][]{%
    \errmessage{(Inkscape) Color is used for the text in Inkscape, but the package 'color.sty' is not loaded}%
    \renewcommand\color[2][]{}%
  }%
  \providecommand\transparent[1]{%
    \errmessage{(Inkscape) Transparency is used (non-zero) for the text in Inkscape, but the package 'transparent.sty' is not loaded}%
    \renewcommand\transparent[1]{}%
  }%
  \providecommand\rotatebox[2]{#2}%
  \newcommand*\fsize{\dimexpr\f@size pt\relax}%
  \newcommand*\lineheight[1]{\fontsize{\fsize}{#1\fsize}\selectfont}%
  \ifx\svgwidth\undefined%
    \setlength{\unitlength}{925.13827587bp}%
    \ifx\svgscale\undefined%
      \relax%
    \else%
      \setlength{\unitlength}{\unitlength * \real{\svgscale}}%
    \fi%
  \else%
    \setlength{\unitlength}{\svgwidth}%
  \fi%
  \global\let\svgwidth\undefined%
  \global\let\svgscale\undefined%
  \makeatother%
  \begin{picture}(1,0.19483476)%
    \lineheight{1}%
    \setlength\tabcolsep{0pt}%
    \put(0,0){\includegraphics[width=\unitlength,page=1]{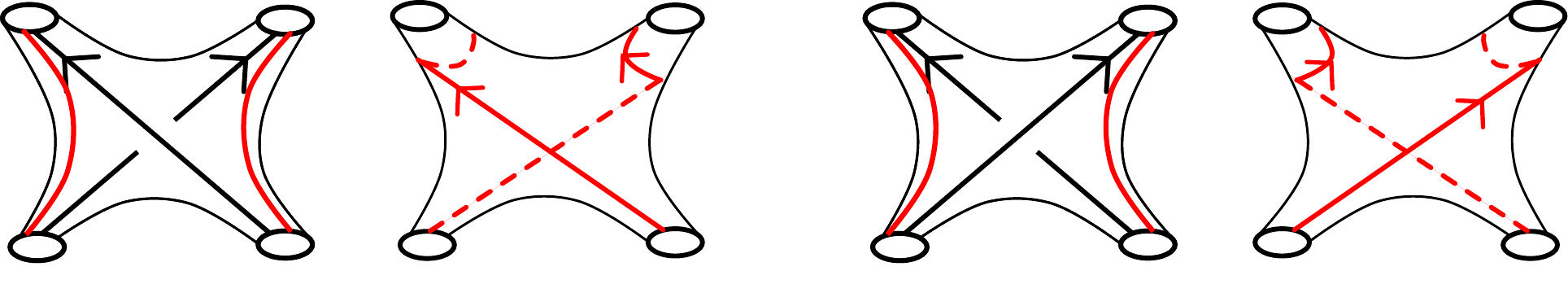}}%
    \put(0.3443454,0.00260941){\color[rgb]{0,0,0}\makebox(0,0)[lt]{\lineheight{1.25}\smash{\begin{tabular}[t]{l}$\alpha_o$\end{tabular}}}}%
    \put(0.08915036,0.00260941){\color[rgb]{0,0,0}\makebox(0,0)[lt]{\lineheight{1.25}\smash{\begin{tabular}[t]{l}$\alpha_{\hat{o}}$\end{tabular}}}}%
    \put(0.63401426,0.00260941){\color[rgb]{0,0,0}\makebox(0,0)[lt]{\lineheight{1.25}\smash{\begin{tabular}[t]{l}$\alpha_{\hat{o}}$\end{tabular}}}}%
    \put(0.88852787,0.00260941){\color[rgb]{0,0,0}\makebox(0,0)[lt]{\lineheight{1.25}\smash{\begin{tabular}[t]{l}$\alpha_{o}$\end{tabular}}}}%
  \end{picture}%
\endgroup%

\caption{\label{f.ct} Twisted curves $D_\sigma(\alpha)$ (in red).}
\end{figure}
\end{defn} 
We rewrite the exponent in the monomial $q^{\frac{-\sgn(\sigma)+3w(D)}{2}}$ in terms of the oriented intersection number $i(D_{\sigma}(\alpha), D)$ of $D_\sigma(\alpha)$ and the link diagram $D$. 
Figure \ref{f.cts} summarizes the local contributions to the intersection number. 
\begin{figure}[H]
\def \svgwidth{.5\columnwidth}
\begingroup%
  \makeatletter%
  \providecommand\color[2][]{%
    \errmessage{(Inkscape) Color is used for the text in Inkscape, but the package 'color.sty' is not loaded}%
    \renewcommand\color[2][]{}%
  }%
  \providecommand\transparent[1]{%
    \errmessage{(Inkscape) Transparency is used (non-zero) for the text in Inkscape, but the package 'transparent.sty' is not loaded}%
    \renewcommand\transparent[1]{}%
  }%
  \providecommand\rotatebox[2]{#2}%
  \newcommand*\fsize{\dimexpr\f@size pt\relax}%
  \newcommand*\lineheight[1]{\fontsize{\fsize}{#1\fsize}\selectfont}%
  \ifx\svgwidth\undefined%
    \setlength{\unitlength}{450.86420014bp}%
    \ifx\svgscale\undefined%
      \relax%
    \else%
      \setlength{\unitlength}{\unitlength * \real{\svgscale}}%
    \fi%
  \else%
    \setlength{\unitlength}{\svgwidth}%
  \fi%
  \global\let\svgwidth\undefined%
  \global\let\svgscale\undefined%
  \makeatother%
  \begin{picture}(1,0.38024067)%
    \lineheight{1}%
    \setlength\tabcolsep{0pt}%
    \put(0,0){\includegraphics[width=\unitlength,page=1]{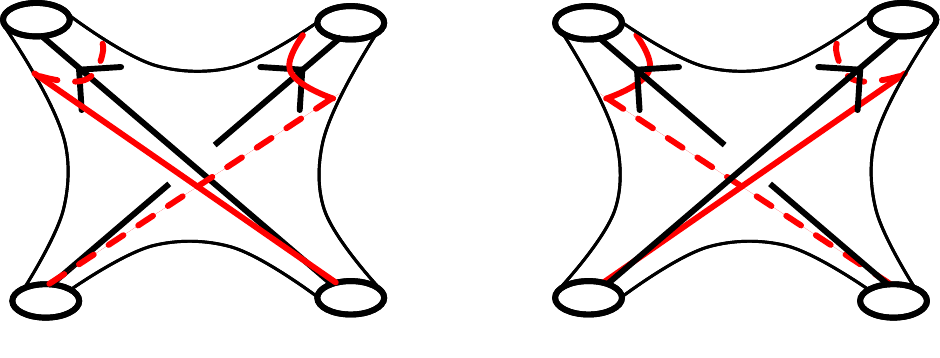}}%
    \put(0.77207861,0.00469152){\color[rgb]{0,0,0}\makebox(0,0)[lt]{\lineheight{1.25}\smash{\begin{tabular}[t]{l}$+2$\end{tabular}}}}%
    \put(0.15525484,0.00469152){\color[rgb]{0,0,0}\makebox(0,0)[lt]{\lineheight{1.25}\smash{\begin{tabular}[t]{l}$-2$\end{tabular}}}}%
    \put(0,0){\includegraphics[width=\unitlength,page=2]{curves_twisted_slope.pdf}}%
  \end{picture}%
\endgroup%

\caption{\label{f.cts}  Local contribution to $i(D_{\sigma}(\alpha), D)$.  }
\end{figure}
At a negative crossing, choosing the $\alpha_o$ arcs associated to the $+$-resolution contributes $-2$. At a positive crossing, choosing the $\alpha_{\hat{o}}$ arcs associated to the $-$-resolution contributes $+2$. 
The contribution in the remaining cases, of choosing the $+$-resolution, and therefore the $\alpha_o$ arcs,  at a positive crossing, and of choosing the $-$-resolution, and therefore the $\alpha_{\hat{o}}$ arcs at a negative crossing, is zero. 
\begin{lem}[Geometric interpretation state sum coefficients] \label{l.intslope}
The degree $\frac{-\sgn(\sigma)+3w(D)}{2}$ of the monomial of $q$ in \eqref{e.Jonesstate}
is given by the oriented intersection (linking number) $i(D_{\sigma}(\alpha), D)$ of the twisted curves $D_{\sigma}(\alpha)$ with the link diagram $D$: 
\[ \frac{-\sgn(\sigma)+3w(D)}{2} = -\frac{i(D_{\sigma}(\alpha), D)}{2} +w(D). \]
\end{lem}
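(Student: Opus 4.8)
The plan is to restate the identity in a cleaner form and then prove it by localising at the crossings of $D$. Clearing denominators, the asserted equality is equivalent to
\begin{equation}
i(D_{\sigma}(\alpha),D)=\sgn(\sigma)-w(D).
\label{e.slopeequiv}
\end{equation}
Write $w(D)=\sum_{x}w_{x}$ with $w_{x}=\pm1$ the sign of the crossing $x$, and $\sgn(\sigma)=\sum_{x}\sgn_{x}(\sigma)$ with $\sgn_{x}(\sigma)=+1$ if $\sigma$ takes the $+$-resolution at $x$ and $-1$ if it takes the $-$-resolution. The curve system $D_{\sigma}(\alpha)$ runs parallel to the strands of $D$ away from the crossings (this is how the $\alpha$-arcs are extended between crossings in Section \ref{s.localsystem}) and is modified only inside a small disk around each crossing, according to the recipe of Figure \ref{f.ct}; hence $i(D_{\sigma}(\alpha),D)=\sum_{x}i_{x}$, where $i_{x}$ is the local algebraic intersection number near $x$. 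So it suffices to check the crossing-local identity $i_{x}=\sgn_{x}(\sigma)-w_{x}$ in each of the cases of Figure \ref{f.ct}.

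I would first record the bookkeeping fact that, with the Kauffman bracket conventions of Section \ref{ss.Joneskb}, the oriented resolution at a positive crossing is the $-$-resolution and the oriented resolution at a negative crossing is the $+$-resolution; consequently a crossing at which $\sigma$ takes the oriented resolution contributes $-w_{x}$ to $\sgn(\sigma)$, while a crossing at which $\sigma$ takes the non-oriented resolution contributes $+w_{x}$. Then the remaining two cases are handled geometrically. If $\sigma$ chooses the non-oriented resolution at $x$, then near $x$ the system $D_{\sigma}(\alpha)$ consists of the untwisted arcs $\alpha_{\hat{o}}(x)$, which lie in two opposite quadrants of the crossing and can be isotoped off $D$; thus $i_{x}=0=w_{x}-w_{x}=\sgn_{x}(\sigma)-w_{x}$. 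If $\sigma$ chooses the oriented resolution at $x$, then near $x$ the system $D_{\sigma}(\alpha)$ consists of the twisted arcs $\alpha_{o}(x)$; reading off Figure \ref{f.ct}, the twist forces these arcs to meet $D$ transversely in exactly two points, and, orienting $D_{\sigma}(\alpha)$ by the orientation inherited from the adjacent strands of $D$, a local sign computation at the crossing shows both intersection points have sign $-w_{x}$. Hence $i_{x}=-2w_{x}=-w_{x}-w_{x}=\sgn_{x}(\sigma)-w_{x}$. Summing over all crossings yields \eqref{e.slopeequiv}, which is the lemma.

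The only genuinely geometric step — and the main obstacle — is the oriented case: one must verify from Figure \ref{f.ct} both that the twisted arcs meet $D$ in exactly two points (rather than zero, or in a cancelling pair) and that the two local signs agree and equal $-w_{x}$, so that they add to $-2w_{x}$. A useful consistency check is the extremal state $\sigma_{o}$ in which every crossing takes its oriented resolution: there $\sgn(\sigma_{o})=-w(D)$ and every crossing is twisted, so \eqref{e.slopeequiv} predicts $i(D_{\sigma_{o}}(\alpha),D)=-2w(D)$, in agreement with the picture of $|w(D)|$ twisted regions each contributing $\mp 2$ to the intersection number.
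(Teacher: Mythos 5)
Your proposal is correct and follows essentially the same route as the paper: both reduce the identity to $i(D_\sigma(\alpha),D)=\sgn(\sigma)-w(D)$ and verify it by summing local contributions at the crossings, with non-oriented (untwisted) resolutions contributing $0$ and oriented (twisted) resolutions contributing $\mp 2$. Your crossing-by-crossing bookkeeping $i_x=\sgn_x(\sigma)-w_x$ is just a localized form of the paper's global count $\sgn(\sigma)-w(D)=2n_+^-(\sigma)-2n_-^+(\sigma)$, and the one genuinely geometric step you isolate (reading the two transverse intersection points of equal sign off Figure \ref{f.ct}) is exactly the step the paper also delegates to its Figure \ref{f.cts}.
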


\begin{rem}
     Note $D_{\sigma}(\alpha)$ may be written in terms of homology classes of loops around the punctures on the 4-punctured spheres, and therefore the meridians of the surface $\Sigma$, but it may not necessarily lift to the configuration space if it 
      does not evaluate to 0 by the local system $\Phi$, which is why it is not part of the intersection form $\ll ~,~ \gg_{\alpha^{\sigma}_J}$ defined in Section \ref{ss.intJoneseval}. 
  
\end{rem}

\begin{proof}
We adapt the proof in \cite{FKP-adequate} to our setting. 
Recall $\sgn(\sigma) = n_+(\sigma) - n_-(\sigma)$, where $n_+(\sigma)$ is the number of crossings of $D$ on which $\sigma$ chooses the $+$-resolution, and $n_-(\sigma)$ is the number of crossings of $D$ on which $\sigma$ chooses the $-$-resolution. Finally, $n(D)$ is the number of crossings in the set of crossings $c(D)$ of $D$. 

The writhe $w(D) = n^+(D) - n^-(D)$ is the difference of the number of positive crossings of $D$ with the number of negative crossings of $D$. 

Let $n_-^+(\sigma)$ be the number of positive crossings on which $\sigma$ chooses the $-$-resolution, and let $n_+^-(\sigma)$ be the 
number of negative crossings on which $\sigma$ chooses the $+$-resolution. 

Then,
\begin{align*}
\sgn(\sigma) - w(D) &= n_+(\sigma) - n_-(\sigma) - (n^+(D) - n^-(D)). \\ 
\intertext{Decomposing} 
n_+(\sigma)  &= n_+^-(\sigma) + n_+^+(\sigma), \qquad n_-(\sigma) = n_-^-(\sigma) + n_-^+(\sigma) \\ 
\intertext{ and }
 n^+(D) &= n_+^+(\sigma) + n_-^+(\sigma), \qquad  n^-(D) = n_+^-(\sigma) + n_-^-(\sigma), 
 \intertext{we obtain} 
 \sgn(\sigma) - w(D) &= n_+(\sigma) - n_-(\sigma) - (n^+(D) - n^-(D)) \\ 
 &= n_+^-(\sigma) + n_+^+(\sigma) - (n_-^-(\sigma) + n_-^+(\sigma))   \\ 
 & -(n_+^+(\sigma) + n_-^+(\sigma) - (n_+^-(\sigma) + n_-^-(\sigma))) \\
 &= 2n^-_+(\sigma) - 2n_-^+(\sigma). 
\end{align*}
Thus we can rewrite 
\[ \frac{\sgn(\sigma)-3w(D)}{2} = \frac{\sgn(\sigma)-w(D) - 2w(D)}{2} = \frac{2n^-_+(\sigma) - 2n_-^+(\sigma)}{2} - w(D). \]

We claim 
\[ i(D_{\sigma}(\alpha), D) = 2n^-_+(\sigma) - 2n_-^+(\sigma) = \sgn(\sigma) - w(D), \]
since we can compute the linking number by summing over the local contributions. On a positive crossing where $\sigma$ chooses the $+$-resolution, there is no intersection between $D_{\sigma}(\alpha)$ and $D$. Similarly for a negative crossing on which $\sigma$ chooses the $-$-resolution, there is no intersection between $D_{\sigma}(\alpha)$ and $D$. In the rest of the cases the intersection contributes $\pm 2$ to the linking number, as illustrated in Figure \ref{f.cts}.

\end{proof}
\begin{rem}[Twisting curves and boudary slopes] \label{r.tbdslope}
We remark that from \cite{FKP-adequate}, the curves $D_{\sigma}(\alpha)$ are isotopic to the boundary curves of the $\sigma$-state surface in $S^3\setminus L$, and the oriented intersection number $i(D_{\sigma}(\alpha), D)$ recovers the boundary slope of the surface if it is essential. 
\end{rem}

Using the geometric interpretation of the coefficients of the state sum via linking numbers of twisted curves, we may rewrite the state-sum formula for the Jones polynomial from \eqref{e.Jonesstate} as
\begin{align} \label{e.jfinalss}
J_L(q) & =  (-1)^{3w(D)} \sum_{\sigma \text{ a Kauffman state on $D$}} (-q)^{\frac{-\sgn(\sigma)+3w(D)}{2}}(q+ q^{-1})^{|\sigma|} \notag \\ 
&= (-1)^{3w(D)} \sum_{\sigma \text{ a Kauffman state on $D$}} (-q)^{-\frac{i(D_{\sigma}(\alpha), D)}{2} +w(D)}(q+ q^{-1})^{|\sigma|}.
\end{align}

We are now ready to prove Theorem \ref{t.main2}. 

\subsection{Proof of Theorem \ref{t.main2}} \label{ss.pmain2}
Let $i(D_\sigma(\alpha), D)$ be the geometric intersection between the curve $D_\sigma(\alpha)$ and the link diagram $D$ defined in the previous section, Section \ref{ss.topmodelJones}, $w(D)$ be the writhe of $D$, and $\ll , \gg_{\alpha^{\sigma}_J}$ be the intersection pairing defined in Section \ref{ss.intJoneseval}. 
\begin{defn}[Intersection form for the Jones polynomial]\label{intformulaJ}
Let $F(\sigma)$, $L(\sigma)$ be the two collections of curves defined (Definition \ref{d.FandL}). They give the homology classes $\mathscr F(\sigma)$, $\mathscr L(\sigma)$ defined in Definition \ref{classesJ}. We consider their intersection pairing specialized as in Section \ref{ss.intJoneseval}. 

    We define the function $\Theta_J(D)$ as 
    \begin{align*} \Theta_J(D)(q) &:= (-1)^{3w(D)} \cdot  \\
    & \cdot\sum_{\sigma \text{ a Kauffman state}} q^{\frac{-i(D_\sigma(\alpha), D)}{2} + w(D)} \ll \mathscr F(\sigma) , \mathscr L(\sigma)  \gg_{\alpha^{\sigma}_{J}} .
    \end{align*}
\end{defn}

We restate Theorem \ref{t.main2} for the convenience of the reader. 

\begin{tthm}[Theorem \ref{t.main2}) (Topological model for the Jones polynomial from link diagrams]
\hspace{0pt}\\
Let $\Theta_J(D) \in \mathbb{Z}[q^{\pm 1}]$ be the state sum of graded intersections in the same configuration space on our fixed Heegaard surface, as in Definition \ref{intJintr}. Then it provides a topological model for the Jones polynomial, relying just on the choice of the diagram $D$:
\begin{equation*}
\Theta_J(D) = J_L(q). 
\end{equation*}
\end{tthm}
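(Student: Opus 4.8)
The plan is to unwind the definition of $\Theta_J(D)$ (Definition \ref{intformulaJ}) term by term and match it against the Kauffman--bracket state sum for $J_L(q)$ recorded in \eqref{e.Jonesstate}. Each summand of $\Theta_J(D)$ is indexed by a Kauffman state $\sigma$ and splits into a $q$-monomial $q^{\frac{-i(D_\sigma(\alpha),D)+3w(D)}{2}}$ and a specialized intersection pairing $\ll \mathscr F(\sigma),\mathscr L(\sigma)\gg_{\alpha^{\sigma}_{J}}$; the two computational inputs needed to evaluate these are already established. For the pairing, Lemma \ref{l.intJ} — the $Q=Q_J=1-q-q^{-1}$ specialization of the general computation in Lemma \ref{intgeneral} — gives $\ll \mathscr F(\sigma),\mathscr L(\sigma)\gg_{\alpha^{\sigma}_{J}}=(q+q^{-1})^{|\sigma|}$, which is exactly the circle-count factor appearing in \eqref{e.Jonesstate}. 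For the monomial, Lemma \ref{l.intslope} identifies the oriented geometric intersection number $i(D_\sigma(\alpha),D)$ with the combinatorial exponent of the state sum through the identity $-\tfrac{1}{2}\,i(D_\sigma(\alpha),D)+w(D)=\tfrac{-\sgn(\sigma)+3w(D)}{2}$.

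Carrying this out, I would first fix a diagram $D$ of $L$ with $n$ crossings and recall, via Definition \ref{d.jones} and the Kauffman-bracket state sum, that $J_L(q)$ is given by \eqref{e.Jonesstate}; this reduces the theorem to an equality between two finite sums indexed by the Kauffman states of $D$. Next I would substitute Lemma \ref{l.intJ} into Definition \ref{intformulaJ}, replacing every $\ll \mathscr F(\sigma),\mathscr L(\sigma)\gg_{\alpha^{\sigma}_{J}}$ by $(q+q^{-1})^{|\sigma|}$, so that only the $q$-monomials attached to each state remain to be compared. Finally I would apply Lemma \ref{l.intslope} to rewrite each exponent $\frac{-i(D_\sigma(\alpha),D)+3w(D)}{2}$ in terms of $\sgn(\sigma)$ and $w(D)$, after which $\Theta_J(D)$ is visibly a sum over Kauffman states of the same monomials that appear in \eqref{e.Jonesstate}, giving $\Theta_J(D)=J_L(q)$.

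The main difficulty is not conceptual but one of bookkeeping: one must check that the substitution in the last step reproduces exactly the global prefactor $(-1)^{-3w(D)/2}$ of \eqref{e.Jonesstate}, i.e. that the writhe-dependent power of $q$ and the writhe-dependent sign match on the nose rather than up to a unit. This requires being careful about the orientation convention entering $i(D_\sigma(\alpha),D)$ (each crossing contributes $\pm 2$, as in Figure \ref{f.cts}) and about the $\sqrt{-q^{\pm 1}}$ signs in the Kauffman bracket skein relation of Section \ref{ss.Joneskb}, so that the two normalizations are genuinely equal. Once this is settled, the term-by-term comparison is immediate and the theorem follows.
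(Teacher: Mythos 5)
Your proposal is correct and follows essentially the same route as the paper: the printed proof likewise consists of substituting Lemma \ref{l.intJ} for the specialized pairing and Lemma \ref{l.intslope} for the exponent, then comparing with \eqref{e.Jonesstate} term by term over Kauffman states. Your closing remark about verifying the writhe-dependent prefactor $(-1)^{3w(D)/2}$ on the nose is a point the paper's two-line proof does not address explicitly, so it is a welcome extra care rather than a deviation.
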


\begin{proof}
By Lemma \ref{l.intslope}, $q^{\frac{-i(D_\sigma(\alpha), D)}{2}+w(D)}$ in $\Theta_J$ recovers $q^{\frac{-\sgn(\sigma)+3w(D)}{2}}$ in the Kauffman state sum  definition \eqref{e.Jonesstate} of the Jones polynomial. Lemma \ref{l.intJ} implies that the term $\ll \mathscr{F}(\sigma), \mathscr{L}(\sigma) \gg_{\alpha^\sigma_J}$  recovers $(q+q^{-1})^{|\sigma|}$ in the state sum definition. It follows that $\Theta_J$ recovers the Jones polynomial.  
\end{proof}

\section{Recovering the HOMFLY-PT polynomial}
\label{S:H}

The HOMFLY-PT polynomial satisfies the following skein relations  and recovers the Jones polynomial and the Alexander polynomial through specializations of its variables. 
\begin{figure}[H]
\begin{center} 
\def \svgwidth{.5\columnwidth}
\begin{equation} \label{e.homfly}
 aP_{(\hpc)} - a^{-1}P_{(\hnc)} = zP_{(\hco)} \qquad \qquad P_U(a, z) = 1.\end{equation}
\end{center}
\end{figure} 
Here $U$ denotes the standard projection of the unknot on the projection plane. Suppose $U^m$ is an unlink with $m$ components, then 
\[ P_{U^m}(a, z) = \left( \frac{a-a^{-1}}{z}\right)^{m-1}.  \]

We recover the Jones polynomial $J_L(q)$ and the symmetrized Alexander polynomial $\Delta_L(t)$ through the following specializations: 
\begin{align*}
J_L(q) &= P_L(a, z)\rvert_{a = q^{-2}, \ z = q - q^{-1}} \\ 
\Delta_L(t) &= P_L(a, z)\rvert_{a = 1, \ z = t^{1/2} - t^{-1/2}}. 
\end{align*}

In this section we describe a topological model for the HOMFLY-PT polynomial from intersections of curves on a Heegaard surface. The common ingredient for the construction is a state sum model, which is given by the Kauffman bracket for the Jones polynomial, and in the case of the HOMFLY-PT polynomial is given by the skein relation \eqref{e.homfly}.

\subsection{Descending diagrams}
We recall an algorithm for applying the skein relation $\eqref{e.homfly}$ to compute the HOMFLY-PT polynomial using a sum over descending link diagrams. For the original reference, see \cite{Hoste} for Hoste's construction of the HOMFLY-PT polynomial and the argument that it is a link invariant using descending diagrams. The definition of nondescending complexity $nd(D)$ used in this paper is due to Murasugi and Przytycki  \cite{MP}. 

\begin{defn}[Nugatory crossing]
Let a link $L\subset S^3$ be represented by a diagram $D$. A \textit{nugatory crossing} of $D$ is a crossing of $D$ for which there exists a circle in the projection plane meeting the diagram transversely at that crossing, but not meeting the diagram at any other point. 
\end{defn}

A nugatory crossing of a diagram can be removed by rotating the portion of the diagram contained in the circle without changing the isotopy type of the link. 
In the next part, we define a subset $nd(D)$ of the crossings to represent the non-descending complexity of the diagram $D$, as follows.

\begin{defn}[Non-descending crossings] 
Let $D = D_1, \ldots, D_{k}$ be an ordering of the components of an oriented link diagram $D$, and let $\{\ell_1, \ldots, \ell_k\}$ where $\ell_i \in D_i$ be a set of base points (note: different from the base points $b_i$ considered previously) that avoid double points of crossings in $D$.
Define $r(D)$ to be the set of crossings reached by traversing each component of the link in given order starting with $\ell_1$ on $D_1$, where we terminate the travel before a nonnugatory crossing is met for the first time, and the strand being traversed is under another link strand at the crossing. We travel from the base point $\ell_{i+1}$ of the next component $D_{i+1}$ only if we finish traversing $D_i$ without encountering such a crossing. 

A crossing of $D$ is in $nd(D)$ if it is not in $r(D)$, and $nd(D)$ is the set of \textit{non-descending} crossings of the diagram $D$. 
 \end{defn} 
\begin{defn}[Non-descending complexity]
    
 The number of crossings in $nd(D)$, denoted by $|nd(D)|$, represents the nondescending complexity of $D$. If $nd(D)$ is empty, so there is no nondescending complexity, then $D$ is called a \textit{descending diagram}. A descending link diagram represents an unlink where the component $D_j$ is below $D_i$ if $j > i$.  
\end{defn}
 \begin{figure}[H]
 \def \svgwidth{.25\columnwidth}
\begingroup%
  \makeatletter%
  \providecommand\color[2][]{%
    \errmessage{(Inkscape) Color is used for the text in Inkscape, but the package 'color.sty' is not loaded}%
    \renewcommand\color[2][]{}%
  }%
  \providecommand\transparent[1]{%
    \errmessage{(Inkscape) Transparency is used (non-zero) for the text in Inkscape, but the package 'transparent.sty' is not loaded}%
    \renewcommand\transparent[1]{}%
  }%
  \providecommand\rotatebox[2]{#2}%
  \newcommand*\fsize{\dimexpr\f@size pt\relax}%
  \newcommand*\lineheight[1]{\fontsize{\fsize}{#1\fsize}\selectfont}%
  \ifx\svgwidth\undefined%
    \setlength{\unitlength}{474.00668407bp}%
    \ifx\svgscale\undefined%
      \relax%
    \else%
      \setlength{\unitlength}{\unitlength * \real{\svgscale}}%
    \fi%
  \else%
    \setlength{\unitlength}{\svgwidth}%
  \fi%
  \global\let\svgwidth\undefined%
  \global\let\svgscale\undefined%
  \makeatother%
  \begin{picture}(1,1.0499527)%
    \lineheight{1}%
    \setlength\tabcolsep{0pt}%
    \put(0.395272,0.00509287){\color[rgb]{0,0,0}\makebox(0,0)[lt]{\lineheight{1.25}\smash{\begin{tabular}[t]{l}$n(D) = 3$\end{tabular}}}}%
    \put(0,0){\includegraphics[width=\unitlength,page=1]{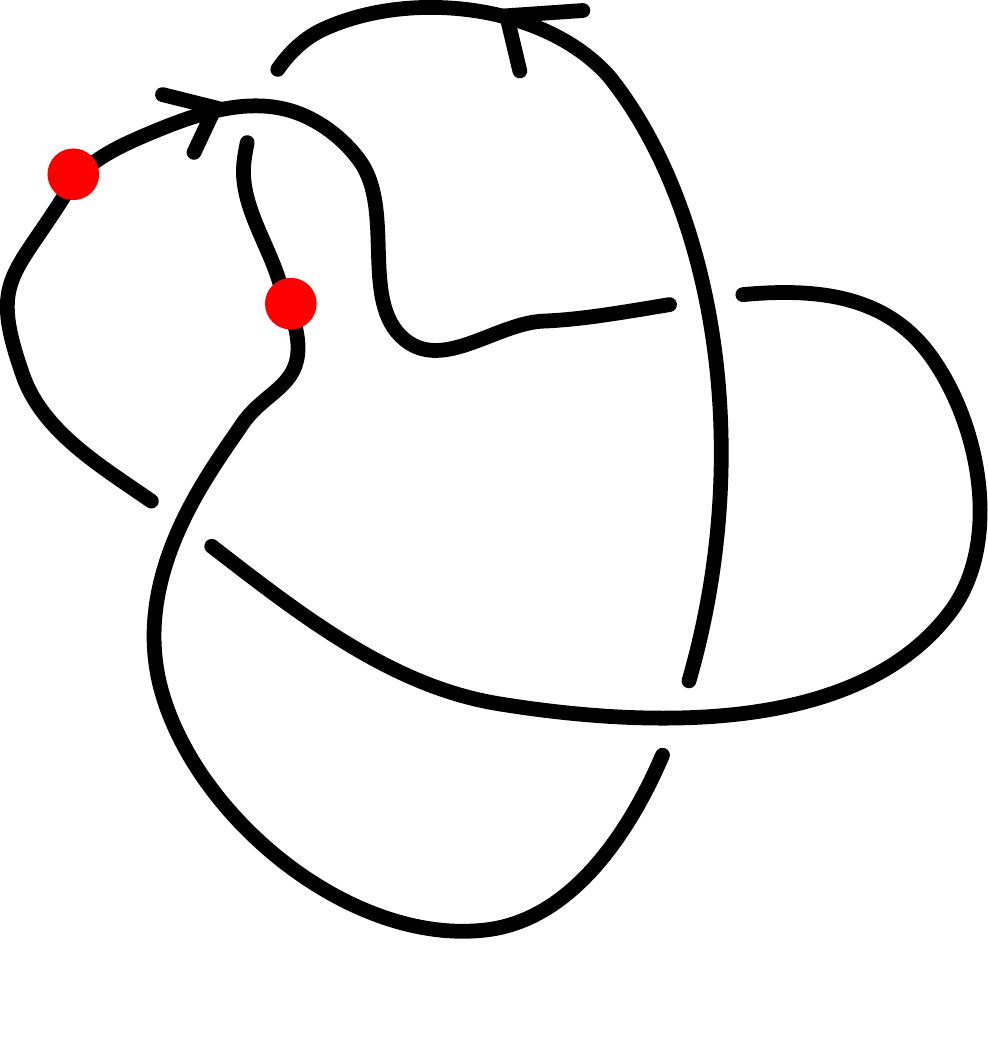}}%
    \put(0.02352113,0.98077175){\color[rgb]{0,0,0}\makebox(0,0)[lt]{\lineheight{1.25}\smash{\begin{tabular}[t]{l}$\ell_1$\end{tabular}}}}%
    \put(0.16181318,0.70218908){\color[rgb]{0,0,0}\makebox(0,0)[lt]{\lineheight{1.25}\smash{\begin{tabular}[t]{l}$\ell_2$\end{tabular}}}}%
    \put(0,0){\includegraphics[width=\unitlength,page=2]{ndcsingle.pdf}}%
    \put(0.38939092,0.56974541){\color[rgb]{1,0,0}\makebox(0,0)[lt]{\lineheight{1.25}\smash{\begin{tabular}[t]{l}$r(d)$\end{tabular}}}}%
  \end{picture}%
\endgroup%

 \caption{ \label{f.ndlink} A link diagram with nondescending complexity 3.}
 \end{figure}
 The nondescending complexity of the link with diagram $D$ shown in Figure \ref{f.ndlink} is $n(D) = 3$, which represents the number of crossings not included in the highlighted travelled portion $r(D)$.
 
\subsection{Algorithm for computing the HOMFLY-PT polynomial.}
Let $D$ be a link diagram with nonzero nondescending complexity. 
    With a suitable choice of base points and ordering of link components of the diagrams in the skein relation, the HOMFLY-PT polynomial of a link with diagram $D$ can be evaluated as a sum of unlinks through applying the skein relation $\eqref{e.homfly}$ to $D$, in a way that $|nd(D')|$ of intermediate diagrams $D'$ decreases at every step until it reaches 0. The sum of unlinks with polynomial coefficients in two variables from a skein relation of the HOMFLY-PT polynomial  then gives a state sum model \cite{Jaeger}. 

We describe how to apply the skein relation \eqref{e.homfly} to reduce the polynomial to a sum of the HOMFLY-PT polynomial of unlinks, following the description of \cite{Chmutov-Polyak}. The state sum model we describe based on the evaluation is indexed by the same states as in \cite{Jaeger}, which we will call \textit{Jaeger states}. 

\paragraph{\textbf{Construction of the binary tree $T_D$}}
From the skein relation \eqref{e.homfly} there is a choice of replacing a crossing $\chi$ by itself (which changes nothing), its mirror image $-\chi$ or the orientation preserving resolution $o$. We construct a directed binary tree $T_D$ to keep track of the choice of replacements at each crossing by $-\chi$ or $o$. 

For $T_D$, the vertices represent diagrams with $D$ as the root of the tree and a directed edge $e$ from $v_0$ to $v_1$ indicates that the link diagram $D_{v_1}$ is obtained from $v_0$ by replacing a crossing $\chi \in D_{v_0}$ by $-\chi$ or $o$.  

We will illustrate each step with a link diagram whose full binary tree $T_D$ is shown in Figure \ref{f.binarytree}. 

\begin{figure}[H]
    \centering
    \def\svgwidth{.2\columnwidth}
\begingroup%
  \makeatletter%
  \providecommand\color[2][]{%
    \errmessage{(Inkscape) Color is used for the text in Inkscape, but the package 'color.sty' is not loaded}%
    \renewcommand\color[2][]{}%
  }%
  \providecommand\transparent[1]{%
    \errmessage{(Inkscape) Transparency is used (non-zero) for the text in Inkscape, but the package 'transparent.sty' is not loaded}%
    \renewcommand\transparent[1]{}%
  }%
  \providecommand\rotatebox[2]{#2}%
  \newcommand*\fsize{\dimexpr\f@size pt\relax}%
  \newcommand*\lineheight[1]{\fontsize{\fsize}{#1\fsize}\selectfont}%
  \ifx\svgwidth\undefined%
    \setlength{\unitlength}{476.94786252bp}%
    \ifx\svgscale\undefined%
      \relax%
    \else%
      \setlength{\unitlength}{\unitlength * \real{\svgscale}}%
    \fi%
  \else%
    \setlength{\unitlength}{\svgwidth}%
  \fi%
  \global\let\svgwidth\undefined%
  \global\let\svgscale\undefined%
  \makeatother%
  \begin{picture}(1,1.37471648)%
    \lineheight{1}%
    \setlength\tabcolsep{0pt}%
    \put(0,0){\includegraphics[width=\unitlength,page=1]{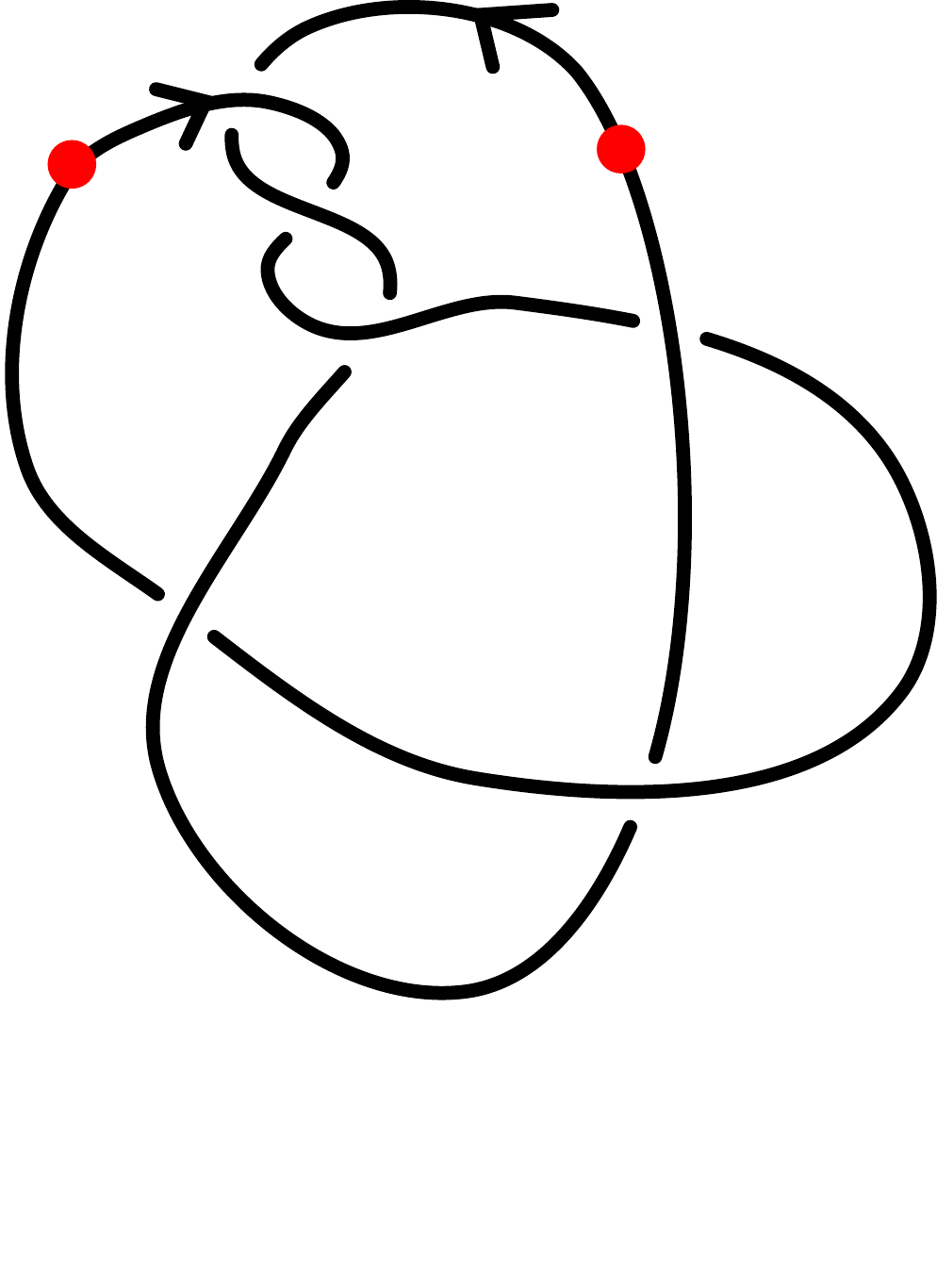}}%
    \put(-0.0019042,1.32797707){\color[rgb]{0,0,0}\makebox(0,0)[lt]{\lineheight{1.25}\smash{\begin{tabular}[t]{l}$\ell_1$\end{tabular}}}}%
    \put(0.7032952,1.22742737){\color[rgb]{0,0,0}\makebox(0,0)[lt]{\lineheight{1.25}\smash{\begin{tabular}[t]{l}$\ell_2$\end{tabular}}}}%
    \put(0.19482462,0.00506149){\color[rgb]{0,0,0}\makebox(0,0)[lt]{\lineheight{1.25}\smash{\begin{tabular}[t]{l}$n(D) = 5$\end{tabular}}}}%
    \put(0,0){\includegraphics[width=\unitlength,page=2]{ndcex1.pdf}}%
    \put(0.46012582,1.147298){\color[rgb]{0,0,0}\makebox(0,0)[lt]{\lineheight{1.25}\smash{\begin{tabular}[t]{l}$\chi$\end{tabular}}}}%
  \end{picture}%
\endgroup%

    \caption{A link diagran of nondescending complexity 5.}
    \label{f.ndcex1}
\end{figure}

\noindent \textbf{Step 1- Apply the HOMFLY-PT skein relation to a chosen crossing $\chi$ of $D$.} \\

Start with the graph $T_D$ consisting of a single vertex $v$ representing the link diagram $D$. Fix an ordering of the components of the diagram $D$, $D_1, \ldots, D_k$ and fix a set of base points $\{\ell_1, \ldots, \ell_k\}$, one on each component. Starting at the basepoint $\ell_1$, travel the strand of the component $D_1$ following the orientation on the link. Stop just before the first nonnugatory crossing $\chi$ that is undercrossed by the strand. If there is no such crossing on $\ell_1$, travel on $\ell_2$ starting from $b_2$, and so on for $\ell_3, \ell_4, \ldots, \ell_k$. If the travel through all the link components is finished without encountering such a crossing, then the diagram $D$ is descending with $nd(D)=0$, and the tree $T_D$ remains a single vertex. 

If such a crossing is encountered, say crossing $\chi$ on component $\ell_i$, then add two vertices $v_{-\chi}$ and $v_{o}$ representing the diagrams $D_{-\chi}$ and $D_{o}$, obtained by replacing the crossing $\chi$ with $-\chi$, $o$, respectively. Add an edge from $v$ to $v_{-\chi}$ and another edge from $v$ to $v_{o}$. 

Examine the diagram represented by the newly added vertices. If it is descending, then we do not add more edges or vertices. If it is  not descending, we fix an ordering on components and base points on the diagram depending on whether it is  $D_{-\chi}$ and $D_{o}$ as follows: 

\begin{figure}[H]
    \centering
    \def\svgwidth{.5\columnwidth}
    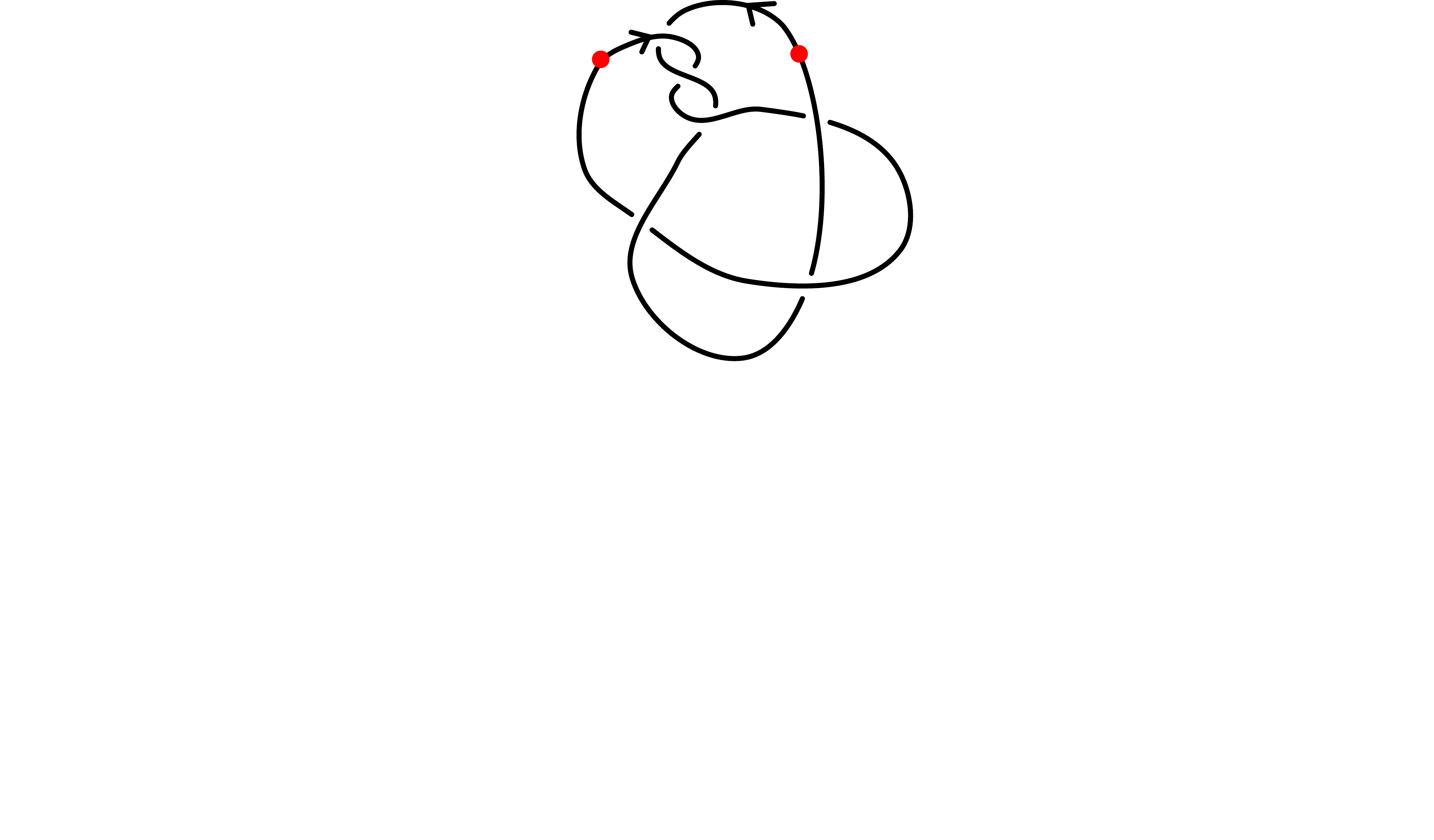
    \caption{First tier of $T_D$.}
    \label{f.ndcex1}
\end{figure}

\textbf{Step 2- Choose new ordering of components and base points for decreasing $|nd(D)|$.} \\

Let $D$ be an oriented link diagram with components $D_1, \ldots, D_{k}$ and base points $\ell_1, \ldots, \ell_{k}$ such that $\ell_i \in D_i$. Given a (positive or negative) crossing $\chi$ in $D$, let $-\chi$ be the crossing with opposite sign (negative or positive). Applying \eqref{e.homfly} expands the HOMFLY-PT polynomial of the link represented by $D = D_{\chi}$ in terms of the HOMFLY-PT polynomial of two link diagrams $D_{-\chi}$ and $D_o$, where $D_{-\chi}$ is  obtained by locally replacing $\chi$ by $\chi$, and $D_o$ is obtained by replacing $\chi$ by two arcs from the oriented resolution of $\chi$. 

Assume $nd(D)$ is not empty, and let $i$ be the smallest index for which $D_i$ has a crossing that is in $nd(D)$. Let $\chi \in nd(D)$ be the crossing in $D_i$ just after the traversal on $D_i$ terminates, so $\chi$ is the first crossing  included in $nd(D)$.  

We choose the following ordering of components and base points of $D_{-\chi}$ and $D_o$ so that the descending complexity $|nd|$ strictly decreases through each application of the skein relation. That is, $|nd(D_{-\chi})| < |nd(D)|$ and $|nd(D_o)| < |nd(D)|$. 

\begin{enumerate}[(1)]
\item The diagram $D_{-\chi}$ inherits base points and ordering of components from $D_\chi$: a component of $D_{-\chi}$ has the same base point and ordering as the corresponding component in $D_\chi$ that is identical to the component outside  of $\chi$. 
\item  There are two cases for $D_o$. Either 
\begin{enumerate}[(a)]
\item  \textbf{replacing by  the oriented resolution at $\chi$ splits a component $D_i$ of $D$ into two components $D_A, D_B$ in $D_o$}  \\
For $j<i$, let each component in $D_o$ corresponding to $D_j$ in $D$ inherit the same base point and order as $D_j$. Let the new component $D_A$ inherit the same base point as $D_i$ in $D$ and choose a base point on the new component $D_B$ just before the crossing $\chi$. Number component $D_A$ by $i$, component $D_B$ by $i+1$, and renumber each component in $D_o$ corresponding to $D_j$ for $j > i$ by shifting each index by $1$, sending $j \mapsto j+1$.  
\item \textbf{replacing by the oriented resolution at $\chi$ merges two components $D_i, D_j$, $i<j$, of $D$ into one component $D_A$ in $D_o$.} \\ 
Let the new component $D_A$ inherit the same base point as $D_i$ and number it $i$. 
For $k'\not= i$ such that $k' < j$, let each component in $D_o$ corresponding to $D_{k'}$ in $D$ inherit the same base point and order as $D_{k'}$. Renumber the components corresponding to $D_{k'}$ for $k' > j$ by sending $k' \mapsto k'-1$. 
\end{enumerate} 
\end{enumerate}

With the ordering and base points as described on the diagram $D_{v'}$ represented by a newly added vertex $v'$ in Step 1, compute $nd(D_{v'}
)$. If $nd(D_{v'}
) \not= 0$, proceed with Step 3. 

\begin{figure}[H]
    \centering
    \def\svgwidth{.9\columnwidth}
    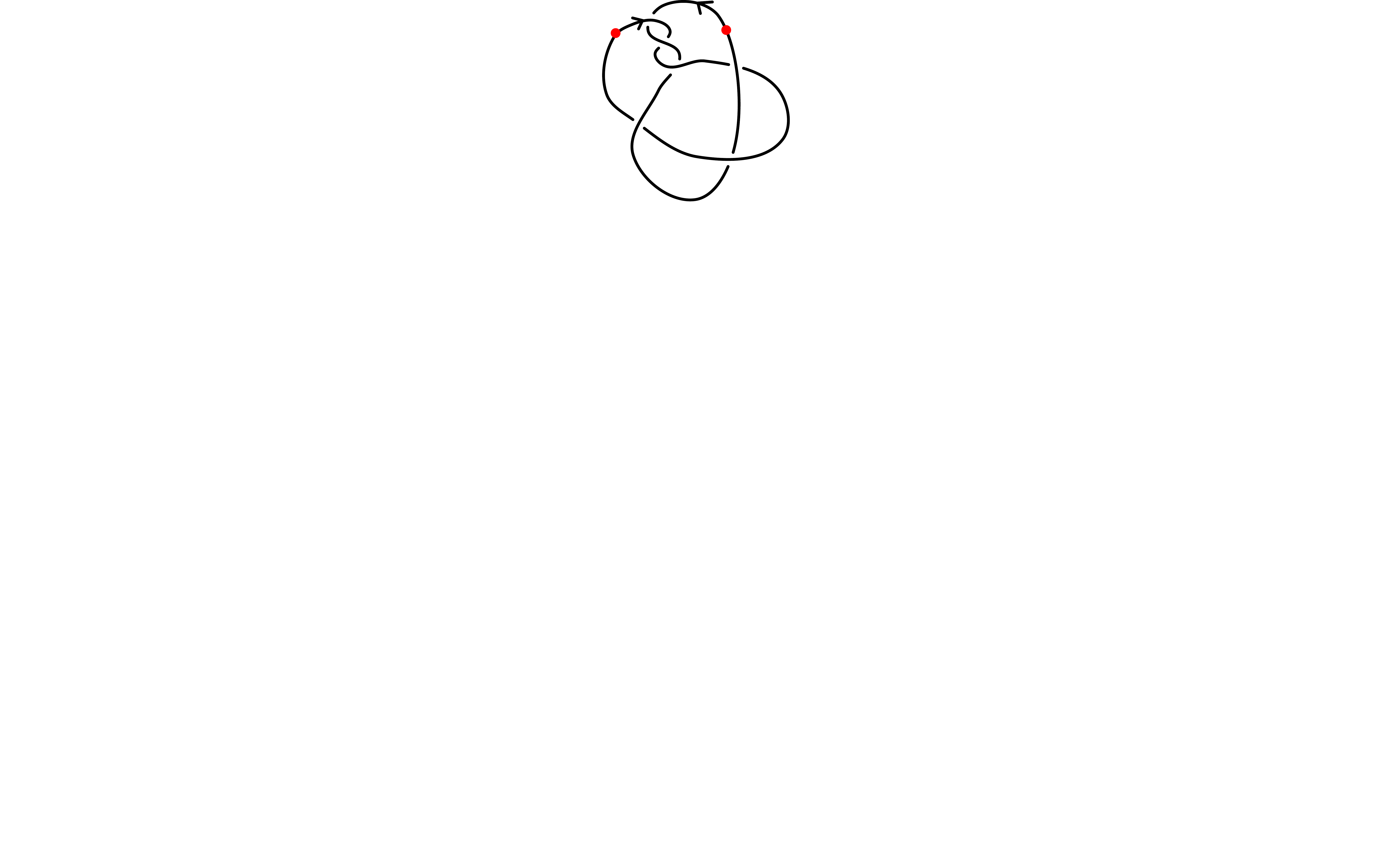
    \caption{Second tier of $T_D$.}
    \label{f.ndcex1}
\end{figure}

\noindent \textbf{Step 3- For each newly added vertex $v'$ with nonzero nondescending complexity, repeat Step 1 and Step 2.}  \\ 

Repeat \textbf{Step 1} and \textbf{Step 2}  with the vertex $v'$ and link diagram $D_{v'} \in \{D_{-\chi}, D_{o} \}$ taking the role of $v$ and $D_v$.  The following lemma proves that the algorithm terminates.

\begin{lem} \label{l.descend}
Let $D$ be a link diagram with a set of base points $\{b_1, \ldots, b_{\ell}\}$ and a fixed ordering of components. Let $\chi$ be the first crossing in $nd(D)$.  With the choice of base points and ordering as described above for $D, D_o$, and $D_{-\chi}$, we have 
\[ |nd(D_{-\chi})| < |nd(D)| \text{ and } |nd(D_o)| < |nd(D)|. \]  
\end{lem}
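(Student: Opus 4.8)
The plan is to follow the standard comparison argument for descending-diagram state sums of the HOMFLY-PT polynomial (as in \cite{Chmutov-Polyak}, \cite{Hoste}, \cite{MP}): equip each of $D_o$ and $D_{-\chi}$ with the prescribed base points and ordering of components, and show that the resulting traversal agrees with that of $D$ up to the site of the crossing $\chi$, so that every crossing counted in $r(D)$ is again reached in the modified diagram. First I would fix the bookkeeping. Write $n=|c(D)|$; for any diagram $D'$ with $n'$ crossings one has $|nd(D')|=n'-|r(D')|$, so it is enough to control $|r(\cdot)|$ against the change in the number of crossings. From the minimality of $i$ one checks that the traversal of $D$ runs through $D_1,\ldots,D_{i-1}$ with no early termination and then stops on $D_i$ just before $\chi$; consequently $r(D)$ is exactly the set of crossings met along the full traversals of $D_1,\ldots,D_{i-1}$ together with those met along the sub-arc of $D_i$ from its base point $b_i$ up to (but not reaching) $\chi$, and in particular $\chi\notin r(D)$.

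For $D_{-\chi}$: this diagram has the same $n$ crossings, inherits the base points and ordering of $D$, and agrees with $D$ outside a small disk about $\chi$. Hence its traversal coincides with that of $D$ until the site of $\chi$, so it meets every crossing of $r(D)$ without terminating; at $\chi$ the sign has been reversed, so the traversing strand is now the over-strand there and the traversal continues past $\chi$. Thus $\chi$ is added to the reached set, $|r(D_{-\chi})|\ge |r(D)|+1$, and
\[ |nd(D_{-\chi})| = n-|r(D_{-\chi})| \le n-|r(D)|-1 = |nd(D)|-1. \]

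For $D_o$: this diagram has $n-1$ crossings, since $\chi$ has been resolved. In case (a) the crossing $\chi$ is a self-crossing of $D_i$ and the oriented resolution splits $D_i$ into $D_A$ (with base point $b_i$) and $D_B$; in case (b) it is a crossing with $D_i$ the under-strand and some $D_k$, $k>i$, the over-strand, and the resolution merges $D_i$ and $D_k$ into $D_A$ (with base point $b_i$). In either case the chosen ordering leaves $D_1,\ldots,D_{i-1}$ and their mutual order unchanged, so their traversals in $D_o$ coincide with those in $D$ and never terminate; moreover the traversal of $D_A$, started at $b_i$, retraces the sub-arc of $D_i$ from $b_i$ to the site of $\chi$ exactly as in $D$, since the diagram and its over/under information are unchanged there. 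Hence every crossing of $r(D)$ — each of which differs from $\chi$ and therefore persists in $D_o$ — is met by the traversal of $D_o$ before it reaches the site of $\chi$, giving $r(D)\subseteq r(D_o)$ and
\[ |nd(D_o)| = (n-1)-|r(D_o)| \le (n-1)-|r(D)| = |nd(D)|-1. \]

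The step I expect to be the main obstacle is ruling out a premature termination of the modified traversal: one must verify that neither switching $\chi$ nor resolving it can turn some crossing $c$ encountered before $\chi$ into a nonnugatory crossing at which the traversing strand passes under, which would abort the traversal too early. This reduces to the locality of the move — it is supported in a small disk about $\chi$, so a simple closed curve realising $c$ as nugatory in $D$ can be isotoped off that disk and still realises $c$ as nugatory in $D_o$ and in $D_{-\chi}$ — together with the fact that the portions of the two traversals preceding $\chi$ are identical paths carrying identical crossing data. The remaining points — the precise way the oriented resolution reconnects the four strand-ends at $\chi$ (so that, in case (a), the traversal of $D_B$ from its new base point and, in both cases, the continuation of $D_A$ behave as claimed) and the renumbering of the higher-indexed components — are routine.
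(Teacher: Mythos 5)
Your proof is correct and follows essentially the same route as the paper's: compare the ordered traversals of $D$, $D_{-\chi}$, and $D_o$, observe that they coincide through $D_1,\ldots,D_{i-1}$ and along the sub-arc of $D_i$ up to the site of $\chi$, and conclude that the reached set can only grow while $\chi$ is either newly reached or removed. Your explicit bookkeeping via $|nd(D')|=|c(D')|-|r(D')|$ and your treatment of the premature-termination/nugatory issue are more careful than the paper's argument but do not constitute a different approach.
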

\begin{proof}
To show $|nd(D_{-\chi})| < |nd(D)|$, we see that $nd(D_{-\chi})$ has at least one fewer crossing compared to $nd(D)$ since we can now extend travel past the crossing $-\chi$ in $nd(D_{-\chi})$. 

For $|nd(D_o)| < |nd(D)|$  we again address two cases depending on whether $D_o$ merges a pair of components in $D$ or splits a component in $D$ into two.  \\ 
\textbf{Case 1} A single component $D_i$ in $D$ is split into two components numbered $i, i+1$ in $D_o$. \\ 
To compute $nd(D_o)$ we traverse $D_o$ in order of the newly chosen base points and compare crossings in the traveled portion $r(D_o)$ with that of $D$. For all components $D_j$ where $1\leq j < i$, $D$ and $D_o$ share the same crossings in the traveled portion of the link diagram. For the component numbered $i$, the traveled portion is now extended past the crossing $\chi$, where it may pick up more crossings in $D_o$, while $n(D_0) = n(D)-1$,  so $nd(D_o) \leq nd(D)-1$ and $nd(D_o) < nd(D)$. \\
\textbf{Case 2} Two components $D_i, D_j$ in $D$ are merged into a single component in $D_o$.  \\ 
Without loss of generality we assume $i < j$. For all components in $D_o$ numbered $k' < i$, the traveled portion remains the same. When traversing the $i$th component of $D_o$, the crossing $\chi$ is removed as we continue to travel on the portion coming from $D_j$, where additional crossings may be included in $r(D_0)$. Thus we have again $nd(D_o) \leq  nd(D)-1$ so $nd(D_o) < nd(D)$.  
\end{proof}

\begin{landscape}
\begin{figure}
\def \svgwidth{.9\columnwidth}
\input{ndcex.pdf_tex}
\caption{\label{f.binarytree} An example of the binary tree $T_D$.}
\end{figure}
\end{landscape}

\subsection{Decomposing into a state sum of unlinks.}
We use the binary tree $T_D$ to give a state sum definition of the HOMFLY-PT polynomial.

\begin{defn}[Jaeger state] \label{d.homflystate}
Let $P$ be a directed path in $T_D$ that starts at the root and ends at a terminal vertex whose associated link diagram has zero non-descending complexity.  

A \textit{Jaeger state} $\sigma_H := \sigma_{H, P}$ on the diagram $D$ associated to $P$ is a function 
\[ \sigma_{H, P}: c(D) \rightarrow \{o, -\chi, id \}   \]
where $\sigma_{H, P}$ chooses the oriented resolution ($o$), crossing-switch ($-\chi$), or to preserve the crossing at each crossing of $D$ ($id$) following the path $P$ as defined below. 

\[ \sigma_{H, P}(\chi) = \begin{cases} &o, \text{if $\chi$ is changed to $o$ by an edge in $T_D$} \\
& -\chi, 
\text{if $\chi$ is changed to $-\chi$ by an edge in $T_D$.} \\ 
&id, 
\text{if the crossing is not changed by any edge in $T_D$.}
\end{cases} \]

In the example illustrated by Figure \ref{f.homflystate}, the base point $\ell_1$ is marked in red as we travel from it through the diagram. Each nonnugatory crossing encountered that is undercrossed by the strand being traversed is enclosed by a dashed circle.
\begin{figure}[H]
\def \svgwidth{\columnwidth}
\begingroup%
  \makeatletter%
  \providecommand\color[2][]{%
    \errmessage{(Inkscape) Color is used for the text in Inkscape, but the package 'color.sty' is not loaded}%
    \renewcommand\color[2][]{}%
  }%
  \providecommand\transparent[1]{%
    \errmessage{(Inkscape) Transparency is used (non-zero) for the text in Inkscape, but the package 'transparent.sty' is not loaded}%
    \renewcommand\transparent[1]{}%
  }%
  \providecommand\rotatebox[2]{#2}%
  \newcommand*\fsize{\dimexpr\f@size pt\relax}%
  \newcommand*\lineheight[1]{\fontsize{\fsize}{#1\fsize}\selectfont}%
  \ifx\svgwidth\undefined%
    \setlength{\unitlength}{2670.15611543bp}%
    \ifx\svgscale\undefined%
      \relax%
    \else%
      \setlength{\unitlength}{\unitlength * \real{\svgscale}}%
    \fi%
  \else%
    \setlength{\unitlength}{\svgwidth}%
  \fi%
  \global\let\svgwidth\undefined%
  \global\let\svgscale\undefined%
  \makeatother%
  \begin{picture}(1,0.18725093)%
    \lineheight{1}%
    \setlength\tabcolsep{0pt}%
    \put(0,0){\includegraphics[width=\unitlength,page=1]{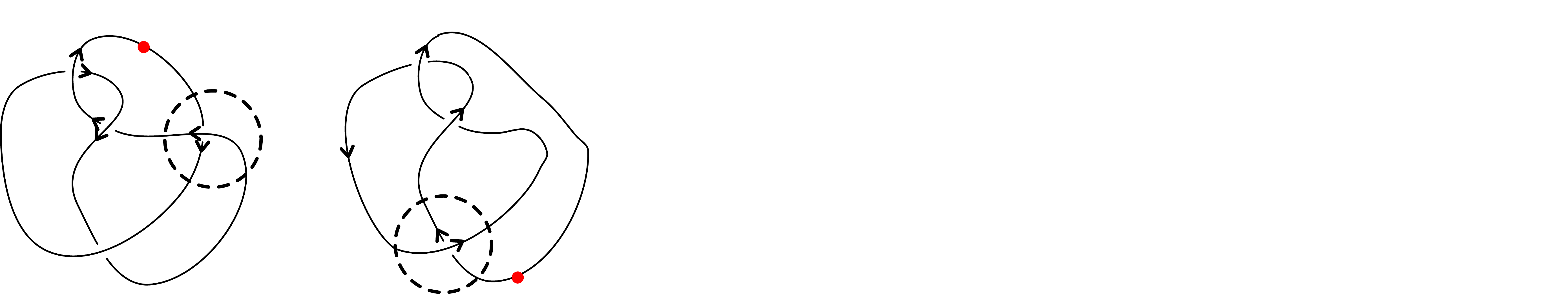}}%
    \put(0.10240997,0.15940629){\color[rgb]{1,0,0}\makebox(0,0)[lt]{\lineheight{1.25}\smash{\begin{tabular}[t]{l}$\ell_1$\end{tabular}}}}%
    \put(0,0){\includegraphics[width=\unitlength,page=2]{bicoloredHOMFLY.pdf}}%
  \end{picture}%
\endgroup%

\caption{\label{f.homflystate}  A Jaeger state $\sigma_{H, P}$ on the figure 8 knot.}
\end{figure} 
\end{defn}

We will let $S_{\sigma_{H, P}}$ denote the result of applying $\sigma_{H, P}$ to $D$, which locally replaces a crossing by the choice of the Jaeger state $\sigma_{H, P}$. This results in a descending link diagram of $|\sigma_{H, P}|$ components.

\begin{rem}[Number of states] We remark that the number of states $\{ \sigma_{H, P} \}$ as defined by Definition \ref{d.homflystate} on a diagram is the same as the number of directed paths from the root to a terminal vertex in the binary tree $T_D$, and therefore it  is generally less than $2^n$ , where $n$ is the number of crossings of $D$. 
\end{rem}
\begin{defn}[Coefficients]
    Now let $\sigma_{H, P}(a, z)$ be the product of coefficients in variables $a, z$ along the path defining $\sigma_{H, P}$, and let $D^{\sigma_{H, P}}$ represent the diagram of $|\sigma_{H, P}|$ component unlinks at the terminal vertex of the path $P$ defining $\sigma_{H, P}$. 
\end{defn}
 Then we obtain the following formula:
\begin{align} 
P_L(a, z) &= \sum_{\substack{\sigma_{H, P} \text{ a Jaeger state} \\ \text{on the diagram $D$}} } \sigma_{H, P}(a, z) P_{D^{\sigma_{H,P}}}(a, z). \notag \\ 
\intertext{We have:} 
P_{D^{\sigma_{H, P}}}(a, z)  &= P_{U^{|\sigma_{H, P}|}}(a, z), \intertext{where $U^{|\sigma_{H, P}|}$ is the collection of state circles obtained by applying $\sigma_{H, P}$ to every crossing in $D$, and $|\sigma_{H, P}|$ is the number of such circles.  Therefore} 
P_L(a, z) &= \sum_{\substack{\sigma_{H, P} \text{ a Jaeger state} \\ \text{on the diagram $D$}} } \sigma_{H, P}(a, z) \left(\frac{a-a^{-1}}{z} \right)^{|\sigma_{H, P}|-1}. \label{e.homflyeqjstate}
\end{align}
\begin{rem}[Relation to Jaeger's coefficients]
We remark that the coefficient polynomial $\sigma_{H, P}(a, z)$ counts the same quantity as in the Jaeger state sum for the HOMFLY-PT polynomial. 
\end{rem}

\subsection{ Strategy for the topological model of the  HOMFLY-PT polynomial}\label{strategy}
For the topological model, we need two geometric tools for interpreting the terms from formula \ref{e.homflyeqjstate}.
We will do this in two main steps, as follows:
\begin{enumerate}[1)]

\item  {\bf Construction of the Topological model (Section \ref{ss.khp})} We provide a model for the two-variable polynomial part via intersection of homology classes.
\item  {\bf Geometric interpretation of  Jaeger's coefficients $\sigma_{H, P}(a, z)$ (Section \ref{ss.Jcoeff})} We will give a geometric interpretation of  Jaeger's coefficients in terms of linking numbers of certain twisted curves on $\Sigma$.  
\item  {\bf Proof of the Main Theorem (Section \ref{ss.tmhomfly})}
\end{enumerate}
For Steps 1) and 2), we will show the following.

\

{\bf 1)  Construction of the homology classes}

Next in Sections \ref{ss.khp} and \ref{ss.tophomflypt}, we give a geometric interpretation to the term $\left(\frac{a-a^{-1}}{z} \right)^{|\sigma_{H, P}|-1}$ as the HOMFLY-PT specialization $\alpha^{\sigma}_H$ of the intersection pairing defined in Section \ref{ss.specialization} between two homology classes $\mathscr{L}^H(\sigma^K_{H, P})$, $\mathscr{F}^H(\sigma^K_{H, P})$, designed to count $|\sigma_{H, P}|-1$ in the degree of the HOMFLY-PT polynomal of $|\sigma_{H, P}|$-component unlinks.   We do this in two steps:

\begin{itemize}
    \item we define an associated Kauffman state $\sigma^K_{H, P}$ to our Jaeger state $\sigma_{H, P}$ (Subsection \ref{ss.khp})
    \item we construct the classes via the geometric support of the Kauffman state (Subsection \ref{ss.tophomflypt}).
\end{itemize}

\

{\bf 2) Understanding Jaeger's coefficients $\sigma_{H, P}(a, z)$}

Motivated by the geometric interpretation (Lemma \ref{l.intslope}) of the coefficient polynomial to each state of Kauffman state sum definition of the Jones polynomial, we introduce a family of twisted curves on our surface in Definition \ref{dtwist}. Then, we give a geometric characterization of the exponents of variables $a, z$ in $\sigma_{H, P}(a, z)$ in Lemma \ref{e.giHOMFLY} in terms of oriented intersections of these curves.


\subsection{Associating a Kauffman state $\sigma^K_{H, P}$ to a Jaeger state $\sigma_{H, P}$} \label{ss.khp}

\

We will refer to a Jaeger state for the HOMFLY-PT state sum as an $H$-state for the rest of this section to simplify notation. 

The descending diagram of unlinks at a terminal node of the binary tree $T_D$ may still have crossings, though the crossings are unimportant in terms of the evaluation of the HOMFLY-PT polynomial. We replace them by the set of state circles from a Kauffman state on the diagram $D$ via the following lemma. 

\begin{lem}[Renormalized Kauffman state with the same number of circles]\label{renkaf}

\

Given a link diagram $D$ with $m$ components, there exists a Kauffman state $\sigma$ on $D$ with $|\sigma|=m$ state circles in $S_{\sigma}$. 
\end{lem}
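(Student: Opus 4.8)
The plan is to prove the statement for an arbitrary link diagram with any number of components, by strong induction on the number $n$ of crossings, keeping careful track of the number of components of the resolved diagram at each step. The base case $n=0$ is immediate: a crossingless diagram with $m$ components is a disjoint union of $m$ embedded circles in the projection plane, so the empty Kauffman state already produces exactly $m$ state circles.

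For the inductive step I would split into two cases according to whether $D^m$ has a self-crossing. Suppose first that some crossing $\chi$ has both strands on the same component $D_i$. The key local observation is that, of the two resolutions of $\chi$, exactly one keeps $D_i$ connected while the other splits $D_i$ into two components: tracing $D_i$ through $\chi$ shows that near $\chi$ the component joins the four local ends $1,2,3,4$ (in cyclic order) so that one complementary smoothing reconnects them into a single circle and the other into two. Resolving $\chi$ by the connecting smoothing produces a diagram $D'$ with $n-1$ crossings and still $m$ components, to which the induction hypothesis applies, yielding a Kauffman state $\sigma'$ on $D'$ with $m$ state circles. Adjoining to $\sigma'$ the chosen resolution of $\chi$ gives a Kauffman state $\sigma$ on $D^m$, and since resolving $\chi$ first and then resolving the remaining crossings by $\sigma'$ is the same as resolving $D^m$ by $\sigma$, the state $\sigma$ has exactly $m$ state circles.

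Now suppose every crossing of $D^m$ joins two distinct components. Pick any crossing $\chi$, between components $D_i$ and $D_j$ say. Because $D_i$ and $D_j$ are closed curves in the projection plane they meet in an even number of points, hence in at least two, so there is a second crossing $\chi'$ between $D_i$ and $D_j$. At a crossing between two distinct components the two strands of $D_i$ are opposite, so every complementary smoothing joins a strand of $D_i$ to a strand of $D_j$; thus either resolution of $\chi$ merges $D_i$ and $D_j$ into a single immersed circle $\widetilde D$, along which $\chi'$ has become a self-crossing. By the local observation of the previous case, one resolution of $\chi'$ splits $\widetilde D$ back into two circles; resolving $\chi$ arbitrarily and $\chi'$ in this way gives a diagram $D'$ with $n-2$ crossings and again $m$ components (the pair $D_i,D_j$ is replaced by the two pieces of $\widetilde D$, the remaining components untouched). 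Applying the induction hypothesis to $D'$ and adjoining the two chosen resolutions, exactly as in the first case, produces a Kauffman state on $D^m$ with $m$ state circles.

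The step I expect to demand the most care is the second case. I need the local smoothing analysis to be applied to $\chi'$ only \emph{after} $\chi$ has been resolved, so that $\chi'$ is genuinely a self-crossing of the merged circle $\widetilde D$; and I need to check that the auxiliary diagram $D'$ obtained there --- which may have turned the remaining $D_i$--$D_j$ crossings into new self-crossings --- is still a bona fide link diagram with $n-2$ crossings and $m$ components, so that the strong induction hypothesis applies without circularity. By contrast, the bookkeeping that the assembled resolutions of $\chi,\chi'$ together with $\sigma'$ form a single Kauffman state on $D^m$ whose state circles coincide with those of $\sigma'$ on $D'$ is routine, once one notes that resolving a fixed set of crossings first is independent of the order in which they are resolved.
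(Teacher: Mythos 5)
Your argument is correct, but it follows a genuinely different route from the paper's. You run a single strong induction on the number of crossings, with two cases: if some crossing is a self-crossing of a component, exactly one of the two smoothings keeps that component a single circle (the standard fact that smoothing a transverse self-intersection of a closed plane curve gives one circle for one choice and two for the other), so you can drop a crossing without changing the component count; if every crossing joins two distinct components, you use the mod-2 intersection parity of closed plane curves to find a second crossing $\chi'$ between the same pair $D_i, D_j$, merge them at $\chi$ (any smoothing of an inter-component crossing merges), and re-split at $\chi'$, dropping two crossings while preserving the count. The paper instead argues in two phases: it first splits the components apart by pairing the inter-component crossings into ``in/out'' pairs and choosing the oriented resolution at each (inducting on the number of components), and then collapses each resulting split knot diagram to a single circle by iteratively choosing non-oriented resolutions with respect to successively updated orientations. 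Both proofs ultimately rest on the same local facts about how a smoothing changes the circle count, but your induction on crossings is more uniform and avoids the in/out pairing and orientation bookkeeping, which is where the paper's proof is sketchiest; the paper's construction, in exchange, is phrased in terms of the oriented/non-oriented dichotomy that recurs in its HOMFLY-PT state sum, though the application of the lemma only needs existence of a state with $m$ circles, which your argument delivers. The point you flag as delicate --- that $\chi'$ must be analyzed only after $\chi$ is resolved, and that the intermediate and final objects are still bona fide link diagrams with the same number of components so the induction applies --- is exactly the right place to be careful, and your treatment of it is sound.
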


\begin{proof}
We first consider the case of a $2$-component link, and show that we can find a Kauffman state on $D$ whose application results in a diagram with two split components.  

A diagram of a link with two components has an even number of crossings $n_c$, where the strands of the crossing belong to different components. The number $n_c$ is the same as twice the linking number of the two components. Denote the set of all such crossings by $X_{n_c}$. Choosing the oriented resolution for all crossings in $X_{n_c}$ results in two components, since choosing the oriented resolution at a crossing merges the pair of link components, and then choosing the oriented resolution again for a self-crossing of a knot splits the component back into two again. See Figure \ref{f.splitmerge} for an illustration.

\begin{figure}[H]
    \centering
    \def \svgwidth{.5\columnwidth}
\begingroup%
  \makeatletter%
  \providecommand\color[2][]{%
    \errmessage{(Inkscape) Color is used for the text in Inkscape, but the package 'color.sty' is not loaded}%
    \renewcommand\color[2][]{}%
  }%
  \providecommand\transparent[1]{%
    \errmessage{(Inkscape) Transparency is used (non-zero) for the text in Inkscape, but the package 'transparent.sty' is not loaded}%
    \renewcommand\transparent[1]{}%
  }%
  \providecommand\rotatebox[2]{#2}%
  \newcommand*\fsize{\dimexpr\f@size pt\relax}%
  \newcommand*\lineheight[1]{\fontsize{\fsize}{#1\fsize}\selectfont}%
  \ifx\svgwidth\undefined%
    \setlength{\unitlength}{1484.80622539bp}%
    \ifx\svgscale\undefined%
      \relax%
    \else%
      \setlength{\unitlength}{\unitlength * \real{\svgscale}}%
    \fi%
  \else%
    \setlength{\unitlength}{\svgwidth}%
  \fi%
  \global\let\svgwidth\undefined%
  \global\let\svgscale\undefined%
  \makeatother%
  \begin{picture}(1,0.51729756)%
    \lineheight{1}%
    \setlength\tabcolsep{0pt}%
    \put(0,0){\includegraphics[width=\unitlength,page=1]{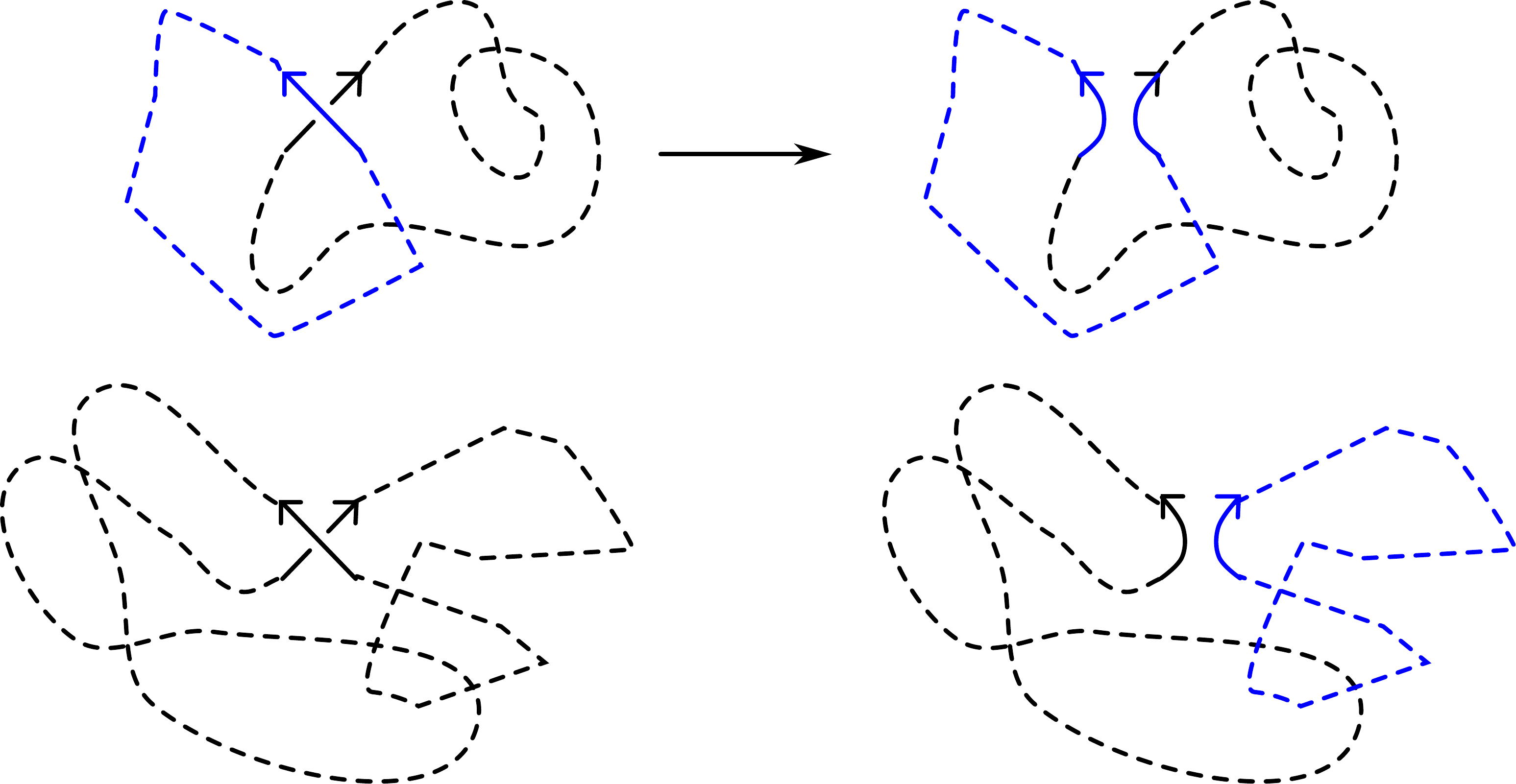}}%
    \put(0.43663124,0.43246582){\color[rgb]{0,0,0}\makebox(0,0)[lt]{\lineheight{1.25}\smash{\begin{tabular}[t]{l}Merge\end{tabular}}}}%
    \put(0,0){\includegraphics[width=\unitlength,page=2]{msplit.pdf}}%
    \put(0.43971684,0.14004775){\color[rgb]{0,0,0}\makebox(0,0)[lt]{\lineheight{1.25}\smash{\begin{tabular}[t]{l}Split\end{tabular}}}}%
  \end{picture}%
\endgroup%

    \caption{Merging and splitting.}
    \label{f.splitmerge}
\end{figure}

    This argument extends to the case of a diagram of a link with $m\geq 2$ components by inducting on $m$, since we can remove linking between any pair of components by applying the oriented resolution as defined above for all crossings between two different components, repeatedly as necessary, to obtain a split diagram $D_0 \sqcup D_1$, where $D_0$ is an unlinked component and $D_1$ is a link with $m-1$ components, then apply the inductive hypothesis to $D_1$.  

Given a link diagram $D'= D_1 \cup D_2 \cup \cdots \cup D_m$ from $D$ of $m$ split components from the above argument, we need to define the Kauffman state $\sigma$ on the remaining crossings that results in a single state circle. The existence of this Kauffman state is known, for example, in \cite{DFKLS}, but we can give a self-contained argument here. 

Pick a crossing in $D_1$ and name it $c_1$. Define $\sigma$ to choose the non-oriented resolution on $c_1$. This will necessarily preserve the single component of the diagram. Replace $c_1$ by the oriented resolution to obtain a new knot diagram $D_1^{s_1}$. Orient $D_1^{s_1}$, pick another crossing $c_2$, and define $\sigma$ to choose the non-oriented resolution with respect to this new orientation on $D_1^{s_1}$. Repeat this process for $D_1^{s_i}$ with $i = 2, \ldots, n(D_1)$ obtained by replacing the crossing $c_{i}$ by the nonoriented resolution in $D_1^{s_{i-1}}$, where $n(D_1)$ is the number of crossings in $D_1$.    We eventually exhaust all the crossings in $D_1$. Move on to the next component $D^2$,  then $D^3$, $D^4$, \ldots, $D^m$, after that, and record the choice of the resolution for $\sigma$. We have extended $\sigma$ to all the crossings of $D$.

\end{proof}

\begin{figure}[H]
\def \svgwidth{.5\columnwidth}
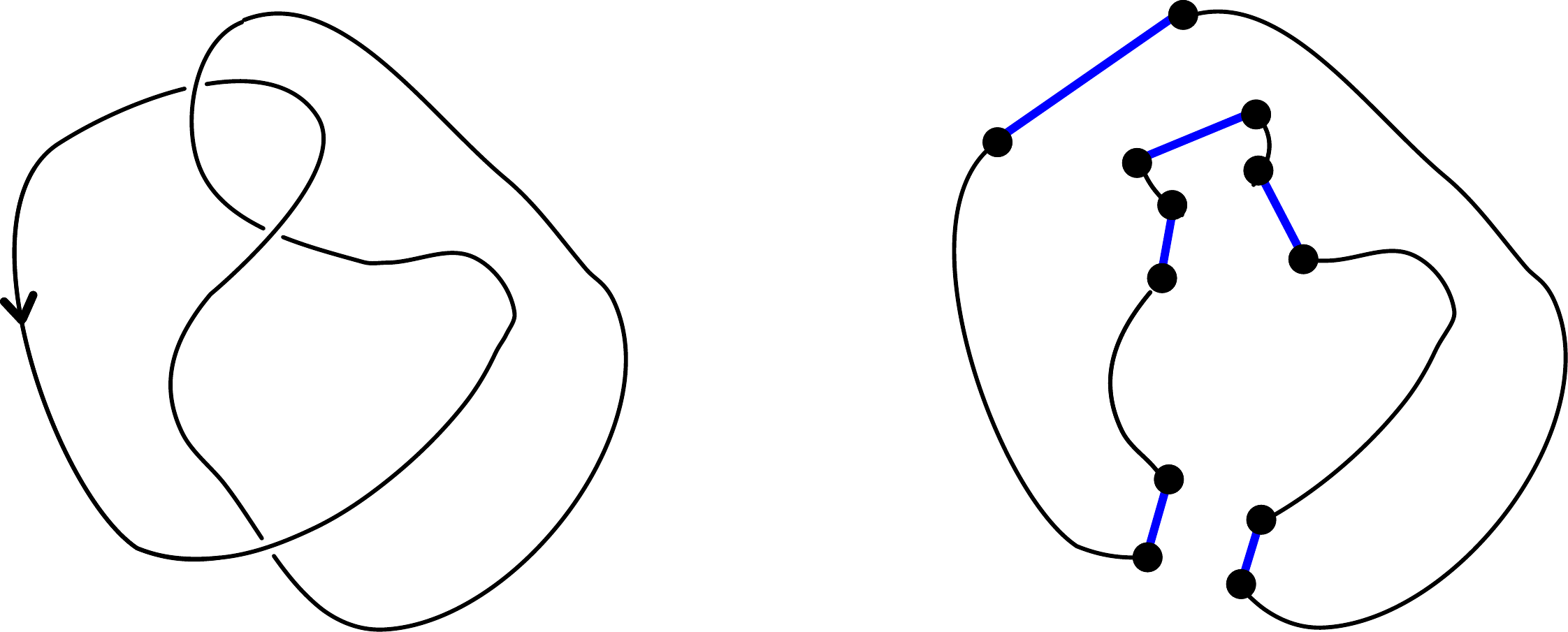
\caption{A renormalized Kauffman state $\sigma^K_{H, P}$.}
\end{figure}
\begin{rem}
We remark that a Kauffman state that preserves the number of components of a Jaeger state in Lemma \ref{renkaf} is not unique. 
\end{rem}
Returning to the context of the HOMFLY-PT polynomial and an $H$-state $\sigma_{H, P}$ on a link diagram $D$, we denote the Kauffman state associated to $\sigma_{H, P}$ via Lemma \ref{renkaf} by $\sigma^K_{H, P}$. 

\begin{figure}[H]
    \centering
    \def \svgwidth{.7\columnwidth}
    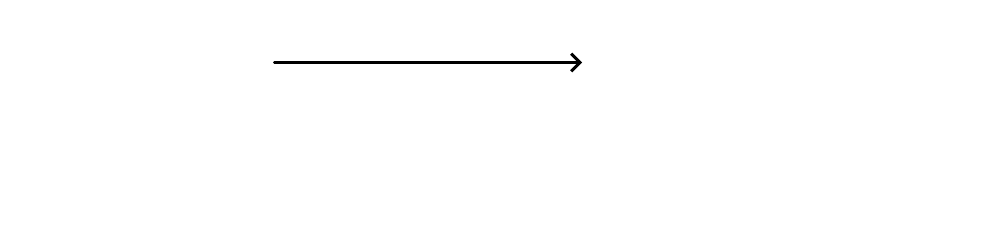
    \caption{Kauffman state associated to a Jaeger state}
    \label{fig:enter-label}
\end{figure}

We may now rewrite the state sum \eqref{e.homflyeqjstate} for the HOMFLY-PT polynomial as follows, replacing $|\sigma_{H, P}|$ by $|\sigma^K_{H, P}|$ using Lemma \ref{renkaf} : 
\begin{align}
P_L(a, z) &= \sum_{\substack{\sigma_{H, P} \text{ a Jaeger state} \\ \text{on the diagram $D$}} } \sigma_{H, P}(a, z) \left(\frac{a-a^{-1}}{z} \right)^{|\sigma_{H, P}|-1} \notag \\  
&= \sum_{\substack{\sigma_{H, P} \text{ a Jaeger state} \\  \text{on the diagram $D$}} } \sigma_{H, P}(a, z) \left(\frac{a-a^{-1}}{z} \right)^{|\sigma^K_{H, P}|-1} \label{e.hksum}
\end{align}

\subsection{Homology classes for the HOMFLY-PT polynomial} \label{ss.tophomflypt}
In this section we construct an intersection model for the HOMFLY-PT polynomial. 

We start from the evaluation of the renormalized Kauffman bracket $$P_{U^{|\sigma_{H, P}|}}(a, z) = \left(\frac{a-a^{-1}}{z} \right)^{|\sigma^K_{H, P}|-1}$$ on an unlink with $|\sigma_{H, P}|$ components, using the state sum formulation \eqref{e.hksum} of the HOMFLY-PT polynomial developed in the previous section. Then, our strategy is to interpret this as a state sum of Lagrangian intersections on our punctured surface. 

In order to achieve this, the idea is to use the above description given by the renormalized Kauffman bracket (as in Lemma \ref{renkaf}) and use its geometric support in order to translate it homologically as graded intersection between homology classes. 

For this we need to understand geometrically the meaning of the renormalization taking the number of closed components minus 1, $|\sigma^K_{H, P}|-1$, rather than $|\sigma^K_{H, P}|$, that appears in the evaluation.

As in the model for the Jones polynomial,  we color both the arcs in an unlink from the oriented/non-oriented resolution chosen by $\sigma^K_{H, P}$ at a crossing blue.  See Figure \ref{f.statecolor}. As in Remark \ref{r.bicolored}, for each state there are twice as many blue arcs as the number of crossings, and twice as many black arcs as the number of crossings. 

\begin{defn}[Cutting point]
    Mark one of our base points $b_{2n}$ and call it the ``cutting base point'' for our diagram.
\end{defn}

This is seen by the fact that once we have a set of state circles, the skein relation of the HOMFLY-PT polynomial evaluates all of them but one, instead of evaluating all components. We interpret this topologically by choosing a base point and cutting the diagram in this point. Then, we proceed with the following homology classes. 
\begin{defn}[Collections of arcs and ovals]
Suppose that we have a fixed state $\sigma^K_{H, P}$. We will follow a similar procedure as the one for the Jones polynomial, the only difference is that we will keep track of our cutting base point. 

More precisely, let us keep the set of blue arcs, but we replace the black arcs by green ovals, constructed in a tubular neighborhood of these arcs, as in Figure \ref{f.arcsovalsoH}. The important change that we make for this case is that we consider the oval that contains the cutting base point $b_{2n}$ and we deform it to a smaller oval, such that it contains just one puncture, instead of two, as in the picture. 

\begin{figure}[H]
\def \svgwidth{.7\columnwidth}
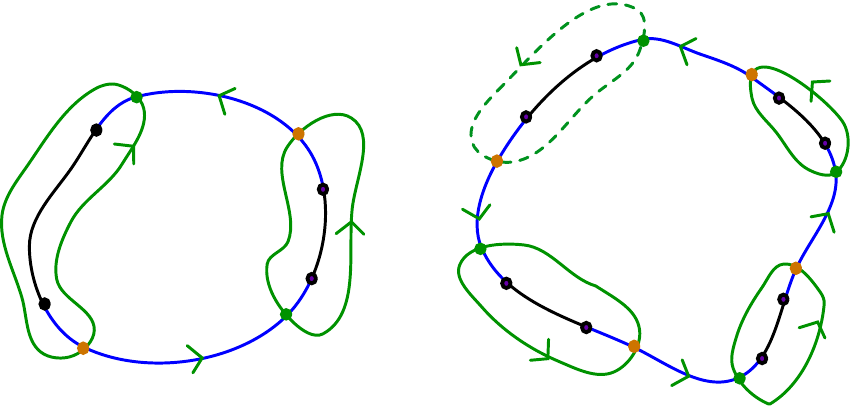
\caption{\label{f.arcsovalsoH} Homology classes for HOMFLY-PT polynomial}
\end{figure}

The effect is that this oval does not intersect the following arc, once we follow the diagram of the link.
Now, we consider the following sets:
\begin{enumerate}
\item[$\bullet$] $F^H(\sigma^K_{H, P})$: given by the blue arcs associated to the state
\item[$\bullet$] $L^H(\sigma^K_{H, P})$: the collection of green ovals constructed from our coloring.
\end{enumerate}
\end{defn}
\begin{defn}[Orientation] We fix an orientation for the arcs and ovals. All the ovals have the counter-clockwise orientation, and for the arcs, we orient them locally as in the case for the Jones polynomial. Each pair that comes from a resolution has the orientation as in Figure \ref{f.arcsovalsoH}.  We orient our 
surface such that the base points all have positive orientation.
\end{defn}

\paragraph{\textbf{Homology classes associated to a state}}
Following the philosophy that we saw for the case of the Jones polynomial, once we have two collections of arcs and ovals associated to a fixed state, we can prescribe two homology classes, using the {\em geometric support}, given by a {\em collection of arcs or ovals in the punctured disk}. 

\begin{defn}
    
With this procedure the collection of curves $F^H(\sigma^K_{H, P})$ leads to a homology class in the  
covering, which we denote as $$\mathscr F^H(\sigma^K_{H, P})\in \mathscr H^{lf}_{2n}.$$

Dually the collection of ovals $L^H(\sigma^K_{H, P})$ leads to a homology class in the covering, which we denote as $$\mathscr L^H(\sigma^K_{H, P})\in \mathscr H_{2n}.$$
\end{defn}

\subsection{Intersection of classes recovers the evaluation of state circles} \label{ss.intersectionhomfly}

In this section, we show that through the topological setup from above we can obtain the evaluation of circles given by the normalized Kauffman bracket for the HOMFLY-PT polynomial. 

The new aspect of this topological viewpoint is that in this case, for a state $\sigma_{H, P}$, the intersection between the homology classes recovers the polynomial $\left( \frac{a-a^{-1}}{z} \right)$ raised to the power given by the {\bf number of circles minus one}.  

\begin{lem}[Recovering the normalized Kauffmann bracket in a general form]\label{intgeneralH}
Let $\sigma_{H, P}$ be a Jaeger state with associated Kauffman state $\sigma^K_{H, P}$. Fix an indeterminate $Q$ and the associated specialization $\alpha^{\sigma_{H, P}}_Q$. Then, the intersection of  $\mathscr 
F^H(\sigma^K_{H, P})$ and $\mathscr 
L^H(\sigma^K_{H, P})$ specialized through $\alpha^{\sigma_{H, P}}_Q$ gives a fixed polynomial in $Q$ to the  number of circles minus one.
\begin{equation}
    \ll \mathscr F^H(\sigma^K_{H, P}), \mathscr L^H(\sigma^K_{H, P}) \gg_{\alpha^{\sigma_{H, P}}_Q}=(1-Q)^{|\sigma_{H, P}|-1}.
\end{equation}    
\end{lem}
\begin{proof}
We will use the formula for computing the intersection form. 

\
The proof of this intersection formula follows a similar strategy as the one presented in Lemmas \ref{intgeneral}. The new part is to explain why the geometry of the intersections of the homology classes gives us the number of circles minus one, even as we use the same local system and specialization of coefficients. 

We explain the main steps below.

\

\noindent {\bf Step 1- Reducing the problem to the case of a state circle}

\

Consider the geometric supports of the homology classes $\mathscr F^H(\sigma^K_{H, P})$ and $\mathscr L^H(\sigma^K_{H, P})$: they are given by the collection of arcs $F(\sigma^K_{H, P})$ and the collection of ovals $ L(\sigma^K_{H, P})$. We remark that these two collections of arcs and ovals on the surface contain our state circles associated to $\sigma^K_{H, P} $. The intersection between $\mathscr F(\sigma^K_{H, P})$ and $\mathscr L(\sigma^K_{H, P})$ is parametrized by: the set of intersection points between their geometric supports, and then graded by the local system. 

Let us look at the set of intersection points. Such a point is encoded by a $(2n)$-tuple of points $z=(z_1,...,z_{2n})$ on the surface $\Sigma'$ such that:
\begin{enumerate}
    \item each arc from $F(\sigma^K_{H, P})$ contains exactly one component from the set $\{z_1,...,z_{2n}\}$
    \item each oval from $L(\sigma^K_{H, P})$ contains exactly one component from the set $\{z_1,...,z_{2n}\}$.
\end{enumerate}

Note that the above requirement adds conditions on the intersection points between arcs and ovals that bound the same state circle. So, in order to have an intersection point $z=(z_1,...,z_{2n})$ we should choose for each state $C_i$ (which is bi-colored into let's say $2m_i$ components) $m_i$ points on the punctured surface $(z^i_1,...,z^i_{m_i})$ with the following requirement:
\begin{enumerate}
    \item each arc from $F(\sigma^K_{H, P})$ bounding $C_i$ contains exactly one component from the set $\{z^i_1,...,z^i_{m_i}\}$
    \item each oval from $L(\sigma^K_{H, P})$ bounding $C_i$ contains exactly one component from the set $\{z^i_1,...,z^i_{m_i}\}$.
\end{enumerate}

Then, our intersection point $z$ will be given by the collection of chosen points $z^i=(z^i_1,...,z^i_{2m_i})$ associated to each state circle: $$\{z^i_1,...,z^i_{2m_i}\mid 1\leq i \leq | \sigma_{H, P}| \  \}.$$
For our intersection pairing, by Proposition \ref{formint} we have to compute two parts:
\begin{enumerate}[i)]
\item  the loop $l_{z}$ (Definition \ref{d.associatedloop}) and its evaluation through the local system
\item  the sign $\alpha_z$.
\end{enumerate}

The sign will be the product of signs associated to each component from the set of state circles. 

For the next steps, we remark that given the property that the intersection point has its components separated into components associated to the state circles, it means that the loop $l_{z}$ and then its evaluation through the local system are both obtained from separate components associated to each state circle separately.
For the result $\Phi(l_{z})$ we can see the contribution of each loop associated to a state circle $l_{z^i}$ and consider the product of all of these when we evaluate the monodromies around the punctures through the local system and then the specialization. 

\

\noindent {\bf Step 2- Intersection points associated to one state circle.}

\

The main idea from the previous step is that our intersection can be obtained using individual computations for each state circle, by looking at its contribution to the monodromy of the local system. 

If we have a state circle that does not contain the cutting base point, then the geometric support of the classes $F(\sigma^K_{H, P})$ and $F^H(\sigma^K_{H, P})$ based on it is the same. Similarly, the geometric support of the classes $L(\sigma^K_{H, P})$ and $L^H(\sigma^K_{H, P})$ based on it the same. So the contribution of this state circle gets evaluated to \begin{equation}
    1+(-1) \cdot Q=1-Q.
\end{equation}
This follows by an analogous computation as the one presented in Steps 2, 3, and 4 from the proof of Lemma \ref{intgeneral}. 

\

\noindent {\bf Step 3- Intersection points associated to the circle containing the cutting point.}

\

It remains to compute the contribution of the state circle that contains the base cutting point. The subtlety is that in this case the geometric supports for $\mathscr L(\sigma^K_{H, P})$ and $\mathscr L^{H}(\sigma^K_{H, P})$ are different (even if the support of $\mathscr F(\sigma^K_{H, P})$ and $\mathscr F^{H}(\sigma^K_{H, P})$ is the same). 
\begin{lem}[Intersection points along the component] Let us fix the above state circle $C_i$. Then, there is precisely one intersection point associated to this component: $B^i=(b^i_1,...,b^i_{m_i})$.
\end{lem}

\begin{proof}
First, we look at the components of the base point $\bf d$ that belong to this state circle and denote them as: 
$(b^i_1,...,b^i_{m_i}).$
Let us suppose that our cutting point $b_m$ is $b^i_{m_i}$ from the above notation. We consider:
\begin{itemize}
    \item $F^i(\sigma^K_{H, P})$  the set of arcs from $F^H(\sigma^K_{H, P})$ bounding $C_i$;
 \item $L^i(\sigma^K_{H, P})$  the set of ovals from $L^H(\sigma^K_{H, P})$ bounding $C_i$.
\end{itemize}

\

\noindent {\bf Requirement for intersection points}\\
An intersection point associated to a state circle $C_i$ is given by a choice of $2m_i$ points on the punctured surface $(z^i_1,...,z^i_{m_i})$ such that:
\begin{enumerate}
    \item each arc from $F^i(\sigma^K_{H, P})$ contains exactly one component $\{z^i_1,...,z^i_{m_i}\}$
    \item each oval from $L^i(\sigma^K_{H, P})$ contains one component from $\{z^i_1,...,z^i_{m_i}\}$.
\end{enumerate}
We have the set of base points, that belong to our collections of arcs and ovals, and let us denote them by:
$$(b^i_1,...,b^i_{m_i}).$$ 
When we following the open contour of our state circle, these points are chosen at intervals at Step $2$, so $$B^i=(b^i_1,...,b^i_{m_i})$$ provides a well defined intersection point that satisfies the above {\bf Requirement for intersection points}. 

Now, we investigate if it is possible to have other intersection points satisfying this requirement.
We look at the arc containing $b^i_1$. This arc intersects {\bf just one oval}: the one  containing $b^i_1$. This is thanks to our choice of making the oval containing the base point $b_m$ smaller, so this does not intersect this arc. So we are obliged to choose for this arc the point $b^i_1$. Now, we look at the next arc. We are obliged to choose on it the point $b^i_2$. Otherwise, we have in our collection of points two components on the same oval, which is not permitted. Inductively, we have to choose the last component to be $b^i_{m_i}$.   
Overall, we see that we have exactly one intersection point:  $B^i=(b^i_1,...,b^i_{m_i})$.

\end{proof}

{\bf Step 4- Computing the contribution of the loop associated to the cut state circle.}

\

We compute the contribution of the intersection point $B^i$ once we evaluate the local system and specialize it through $\alpha^{\sigma_{H, P}}_Q$. For this, we have to see which loop is associated to this intersection point.

If we have chosen the components of $B^i$, then the loop associated to it is the constant loop given by the collection of base points $$b^i_1,\ldots,b^i_{m_i}.$$

We remark that this loop does not wind around any puncture, so the contribution of the intersection point $B^i$ to the local system gets evaluated to $1$.

\

\noindent {\bf Step 5- Sign contribution of intersection point from the cut state circle}

\

For the point $B^i$ we have all intersections between arcs and ovals with positive local orientations and also there is no permutation induced at the level of the set of arcs and ovals. So this point has a sign contribution of $$+1.$$

\

\noindent {\bf Step 6- Contribution of intersection point associated to the cut circle to the intersection form}

\

Overall we see that $B^i$ contributes to the local system evaluation by $1$ and carries the sign $1$.
    
This shows that the component $C_i$ contributes to the intersection by $1$. 
In particular, the {\bf cut circle} does not contribute to the count of the local system, and thus it does not add any contribution to the intersection.  
For all state circles, and we have $| \sigma^K_{H, P}| -1$ of them, we obtain a contribution of $(1-Q)$. So, our intersection is indeed: 
\begin{equation}
    \ll \mathscr F^H(\sigma^K_{H, P}), \mathscr L^H(\sigma^K_{H, P}) \gg_{\alpha^{\sigma_{H, P}}_Q}=(1-Q)^{|\sigma_{H, P}|-1}.
\end{equation}    
which concludes the proof of the Lemma.

\end{proof}

Now we are ready to show that the above topological viewpoint, provided by the set of graded intersections of homology classes which are associated to the cut state circles give the HOMFLY-PT polynomial.

For the case of the HOMFLY-PT polynomial, we have introduced another specialization of coefficients, following Definition \ref{specH}, as below:
 \begin{equation}
      \begin{aligned}
&\alpha^{\sigma_{H, P}}_{H}:  \Z[x_1^{\pm 1},...,x_{8n}^{\pm 1}]  \rightarrow \Z[a^{\pm1},z^{\pm1}](1-\frac{a-a^{-1}}{z})
\end{aligned}
 \end{equation}
This has the associated specialized intersection pairing given by:
\begin{equation}
\ll ~,~ \gg_{\alpha^{\sigma_{H, P}}_{H}}: \mathscr H^{lf}_{2n}\mid_{\alpha^{\sigma_{H, P}}_H} \otimes \mathscr H_{2n}\mid_{\alpha^{\sigma_{H, P}}_H} \rightarrow\Z[a^{\pm1},z^{\pm1}](1-\frac{a-a^{-1}}{z}).
\end{equation}

\begin{lem}[Recovering the HOMFLY-PT polynomial for un-links via arcs and ovals]\label{intJ}
Let us fix a Jaeger state $\sigma_{H, P}$ with the associated Kauffman state $\sigma^K_{H, P}$. Then, the intersection between the classes $\mathscr 
F^H(\sigma^K_{H, P})$ and $\mathscr 
L^H(\sigma^K_{H, P})$ specialized through $\alpha^{\sigma_{H, P}}_{H}$ gives precisely the HOMFLY-PT renormalized Kauffman bracket evaluated on the state circles associated to $\sigma^K_{H, P}$:
\begin{equation}
    \ll \mathscr F^H(\sigma^K_{H, P}),\mathscr L^H(\sigma^K_{H, P}) \gg_{\alpha^{\sigma_{H, P}}_{H}}=\left(\frac{a-a^{-1}}{z}\right)^{|\sigma_{H, P}|-1}.
\end{equation}    
\end{lem}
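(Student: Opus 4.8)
The plan is to obtain this as an immediate specialization of the general-parameter computation in Lemma \ref{intgeneralH}, exactly mirroring the way Lemma \ref{l.intJ} was deduced from Lemma \ref{intgeneral} in the Jones case. First I would recall that Lemma \ref{intgeneralH} already establishes, for an arbitrary indeterminate $Q$ together with its associated specialization $\alpha^{\sigma_H}_Q$ built by the recipe of Section \ref{SS:spec}, the identity
\[
\ll \mathscr F^H(\sigma_H), \mathscr L^H(\sigma_H) \gg_{\alpha^{\sigma_H}_Q} = (1-Q)^{|\sigma_H|-1},
\]
where the exponent is the number of state circles \emph{minus one}: an ordinary state circle $C_i$ contributes the two forced intersection points $\bar{B}^i$ and $\bar{\bar{B}}^i$, weighted $1$ and $-Q$ respectively, hence a factor $1-Q$, whereas the distinguished circle through the cutting base point $b_{2n}$ admits only the single intersection point $\bar B^i$, whose loop winds around no puncture and carries sign $+1$, hence a factor $1$.

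Next I would invoke Definition \ref{specH}: the change of coefficients $\alpha^{\sigma_H}_H$ is precisely the map $\alpha^{\sigma}_Q$ of Definition \ref{specQ} for the particular choice $Q = Q_H := 1 - \frac{a-a^{-1}}{z}$, composed with the inclusion of its target $\Z[Q_H^{\pm 1}]$ into the slightly enlarged coefficient ring of Definition \ref{specH}. Since that inclusion is a ring homomorphism, it is compatible with the specialized intersection pairing, so substituting $Q = Q_H$ into the displayed identity gives
\[
\ll \mathscr F^H(\sigma_H), \mathscr L^H(\sigma_H) \gg_{\alpha^{\sigma_H}_{H}} = (1-Q_H)^{|\sigma_H|-1}.
\]
Finally, I would simplify $1 - Q_H = 1 - \bigl(1 - \frac{a-a^{-1}}{z}\bigr) = \frac{a-a^{-1}}{z}$, which yields
\[
\ll \mathscr F^H(\sigma_H), \mathscr L^H(\sigma_H) \gg_{\alpha^{\sigma_H}_{H}} = \left(\frac{a-a^{-1}}{z}\right)^{|\sigma_H|-1},
\]
i.e. precisely the evaluation of the renormalized Kauffman bracket on the unlink with $|\sigma_H|$ components appearing in the state sum \eqref{e.homflyss1}.

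I do not expect any genuine obstacle here: the substantive geometric work — identifying the intersection points on each state circle, computing the loops $l_z$ and their images under $\Phi$, and bookkeeping the signs so that the product over all $|\sigma_H|$ circles collapses to $(1-Q)^{|\sigma_H|-1}$ — is already carried out in Lemma \ref{intgeneralH}; the present step amounts to substituting a specific value of $Q$. The only point I would take a moment to make precise is that the passage from $\Z[Q_H^{\pm1}]$ to the enlarged coefficient ring of Definition \ref{specH} is faithful, so that the evaluation $(1-Q_H)^{|\sigma_H|-1} = \bigl(\frac{a-a^{-1}}{z}\bigr)^{|\sigma_H|-1}$ is unambiguous; granting this, the lemma follows.
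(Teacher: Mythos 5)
Your proposal is correct and matches the paper's own proof essentially verbatim: both deduce the identity by substituting $Q = Q_H = 1 - \frac{a-a^{-1}}{z}$ into the general-parameter formula $\ll \mathscr F^H(\sigma_H), \mathscr L^H(\sigma_H) \gg_{\alpha^{\sigma_H}_Q} = (1-Q)^{|\sigma_H|-1}$ of Lemma \ref{intgeneralH} and simplifying $1-Q_H = \frac{a-a^{-1}}{z}$. If anything you are slightly more careful than the paper, which cites Lemma \ref{intgeneral} (the Jones-case exponent $|\sigma|$) where it clearly means the cut-circle version with exponent $|\sigma_H|-1$, and you also flag the harmless point about the inclusion into the enlarged coefficient ring.
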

\begin{proof}
Following Lemma \ref{intgeneral}, we have that:
\begin{equation}
    \ll \mathscr F^H(\sigma^K_{H, P}),\mathscr L^H(\sigma^K_{H, P}) \gg_{\alpha^{\sigma_{H, P}}_{Q}}=(1-Q)^{|\sigma_{H, P}|-1}.
\end{equation}
In our case we have $Q=Q_H=1-\frac{a-a^{-1}}{z}$ and so this leads to our desired formula for the intersection:
\begin{equation}
    \ll \mathscr F^H(\sigma^K_{H, P}),\mathscr L^H(\sigma^K_{H, P}) \gg_{\alpha^{\sigma_{H, P}}_{H}}=\left(\frac{a-a^{-1}}{z}\right)^{|\sigma_{H, P}|-1}.
\end{equation}
\end{proof}

\subsection{A geometrical description of Jaeger's coefficients via linking numbers of twisted curves} \label{ss.Jcoeff}

For a Jaeger state $\sigma_{H, P}$ we also define the monomial coefficients $i^a(\sigma_{H, P})$, $i^z(\sigma_{H, P})$ of $a$ and $z$, respectively, summing over all local contributions. These are obtained via the skein relation \eqref{e.homfly}. Recall $c(D)$ is the set of crossings of $D$, $\chi$ is a crossing of $D$ and $-\chi$ is the crossing of opposite sign to $\chi$. 
\begin{itemize}
\item if $\chi$ is a positive crossing, then 
\[ (i_\chi^a(\sigma_{H, P}), i_\chi^z(\sigma_{H, P})) :=  \begin{cases}  (-1, 1) &\text{ if $\sigma_{H, P}(\chi)=o$} \\
(-2, 0) &\text{ if $\sigma_{H, P}(\chi) = -\chi$} \\ 
(0, 0) & \text{ if $\sigma_{H, P}(\chi) = \chi$} \\ 
\end{cases}  \]
\item if $\chi$ is a negative crossing, then 
\[ (i_\chi^a(\sigma_{H, P}), i_\chi^z(\sigma_{H, P})) := \begin{cases} (1, 1) &\text{ if $\sigma_{H, P}(\chi) = o$} \\
(2, 0) &\text{if $\sigma_{H, P}(\chi) = -\chi$ } \\ 
(0, 0) & \text{ if $\sigma_{H, P}(\chi) = \chi$} \\ 
\end{cases}  \]
\item the \textit{sign} of a Jaeger state is given by: 
\[ \sgn_{\chi}(\sigma_{H, P}) = \begin{cases} 
-1 & \text{if $\chi$ is a negative crossing and $\sigma_{H, P}(\chi) = o$} \\ 
1 & \text{otherwise.} \end{cases} \] 
\end{itemize} 
Now let 
\[ i^a(\sigma_{H, P}):= \sum_{\chi \in c(D)} i_\chi^a(\sigma_{H, P}) \qquad \text{and} \qquad i^z(\sigma_{H, P}):= \sum_{\chi \in c(D)} i_\chi^z(\sigma_{H, P}), \] with
\[ \sgn(\sigma_{H, P}) = \prod_{\chi \in c(D)} \sgn_{\chi}(\sigma_{H, P}). \]

Recall the twisted $\alpha$ curves $\alpha_{o}$ and regular (untwisted) $\alpha$ curves $\alpha_{\hat{o}}$ as in Definition \ref{d.twistedcurvesJ} for recovering a part of the degree of the monomial in $q$ multiplying the Kauffman bracket of state circles in state sum formulation \eqref{e.jfinalss} of the Jones polynomial. We define analogous curves  $\alpha^H_{o}$ and $\alpha^H_{\hat{o}}$ for the HOMFLY-PT polynomial to recover the degree of $a, z$ in the coefficient $\sigma_{H, P}(a, z)$ to each Jaeger state as their oriented intersections. 

\begin{defn}[Twisting curves for Jaeger's coefficients]
\label{dtwist}
The curves $\alpha^H_{o}(+)$ and $\alpha^H_{o}(-)$ locally replace a positive crossing and a negative crossing, respectively, by curves that each winds around the meridian twice in the following figure. 
\begin{figure}[H]
\def\svgwidth{.5\columnwidth}
\begingroup%
  \makeatletter%
  \providecommand\color[2][]{%
    \errmessage{(Inkscape) Color is used for the text in Inkscape, but the package 'color.sty' is not loaded}%
    \renewcommand\color[2][]{}%
  }%
  \providecommand\transparent[1]{%
    \errmessage{(Inkscape) Transparency is used (non-zero) for the text in Inkscape, but the package 'transparent.sty' is not loaded}%
    \renewcommand\transparent[1]{}%
  }%
  \providecommand\rotatebox[2]{#2}%
  \newcommand*\fsize{\dimexpr\f@size pt\relax}%
  \newcommand*\lineheight[1]{\fontsize{\fsize}{#1\fsize}\selectfont}%
  \ifx\svgwidth\undefined%
    \setlength{\unitlength}{495.50922063bp}%
    \ifx\svgscale\undefined%
      \relax%
    \else%
      \setlength{\unitlength}{\unitlength * \real{\svgscale}}%
    \fi%
  \else%
    \setlength{\unitlength}{\svgwidth}%
  \fi%
  \global\let\svgwidth\undefined%
  \global\let\svgscale\undefined%
  \makeatother%
  \begin{picture}(1,0.36669384)%
    \lineheight{1}%
    \setlength\tabcolsep{0pt}%
    \put(0,0){\includegraphics[width=\unitlength,page=1]{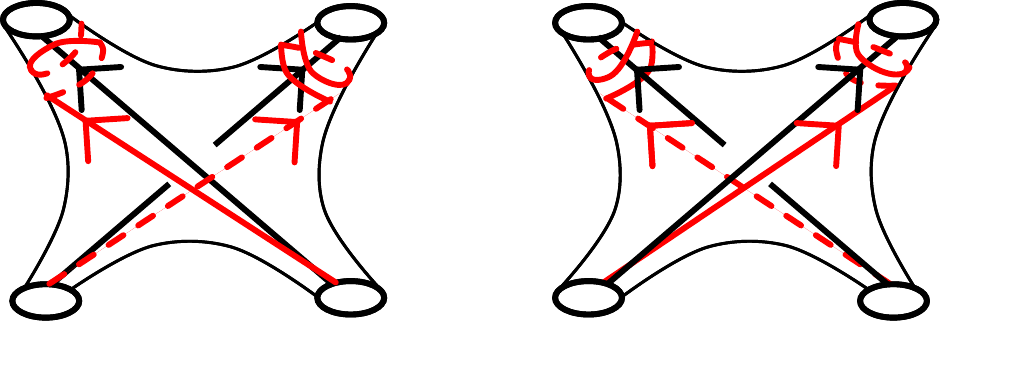}}%
    \put(0.66339145,0.01220747){\color[rgb]{0,0,0}\makebox(0,0)[lt]{\lineheight{1.25}\smash{\begin{tabular}[t]{l}$\alpha^H_{o}(+)$\end{tabular}}}}%
    \put(0.1137691,0.01188005){\color[rgb]{0,0,0}\makebox(0,0)[lt]{\lineheight{1.25}\smash{\begin{tabular}[t]{l}$\alpha^H_{o}(-)$\end{tabular}}}}%
  \end{picture}%
\endgroup%

\caption{\label{f.curves_twistedH} Twisting curves for Jaeger's coefficients. }
\end{figure}
The curves $\alpha^H_{o}(0, +)$ and $\alpha^H_{o}(0, -)$ replaces the oriented resolution with the twisted $\alpha_{o}$ curves as in Figure \ref{f.ct}. 
The curves $\alpha^H_{\hat{o}}$ replaces a crossing by the two $\alpha'$ arcs corresponding to the non-oriented resolution. 
\end{defn}

 Let $D^a_{\sigma_{H, P}}$ be the link diagram $D$ on $\Sigma'$ twisted by $\sigma_{H, P}$ defined locally at each crossing $\chi$ of $D$ as follows. 
\[D^a_{\sigma_{H, P}}(\chi) :=
\begin{cases} 
\text{Replace $\chi$ by $\alpha^H_{o}(0, \sgn(-\chi))$}  &\text{ if $\sigma_{H, P}(\chi) = o$} \\ 
\text{Replace $\chi$ by $\alpha^H_{o}(\sgn(-\chi))$}  &\text{ if $\sigma_{H, P}(\chi) = -\chi$} \\
\text{ Replace $\chi$ by $\alpha^H_{\hat{o}}$}   &\text{ otherwise.} 
\end{cases} \]

\begin{defn}
We define the diagram $D^-$  to be the diagram obtained by switching each crossing $\chi$ of $D$  to $-\chi$. 
\end{defn}

The curves $D^z_{\sigma_{H, P}}$ will be $D^+_{\sigma'_{H, P}}(\alpha)$ twisted by another Kauffman state $\sigma'_{H, P}$ on $D^+$ associated to $\sigma_{H, P}$ on $D$ as in Definition \ref{d.dtbystate}, see Figure \ref{f.ct}, which chooses the non-oriented resolution $\hat{o}$ at every crossing $\chi^+ \in c(D^+)
$ where $\sigma_{H, P}(\chi) = o$ for the corresponding crossing $\chi\in c(D)$ and the oriented resolution otherwise. 

\begin{defn}
We define the diagram $D^+$  to be the diagram obtained by switching each negative crossing $\chi$ of $D$ to a positive crossing. 
\end{defn}
We show the functions
$i^a(\sigma_{H, P}), i^z(\sigma_{H, P})$ can similarly be recovered from the oriented intersections of $D^a_{\sigma_{H, P}}$ and $D^z_{\sigma_{H, P}}$ with the link diagram $D^-$ and link diagram $D^+$. 
\begin{lem}[Geometric interpretation of Jaeger's coefficients] \label{e.giHOMFLY}
We have 
\[ i^a(\sigma_{H, P}) = \frac{i(D^a_{\sigma_{H, P}}(\alpha), D^-)}{2}\]
\[ i^z(\sigma_{H, P}) = \frac{i(D^z_{\sigma_{H, P}}(\alpha), D^+)}{2}.\]
\end{lem}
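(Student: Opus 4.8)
The plan is to reduce the statement to a purely local, crossing-by-crossing verification, following the exact template of Lemma \ref{l.intslope}. The key point is that both sides of each equality are defined as sums of local contributions indexed by the crossings of $D$: the right-hand sides $\frac{1}{2}i(D^a_{\sigma_H}(\alpha),D)$ and $\frac{1}{2}i(D^z_{\sigma_H}(\alpha),D)$ are oriented linking numbers between the twisted curve systems and the diagram $D$, computed by summing signed local intersections over the crossings where $\sigma_H$ or $\sigma_H'$ makes a nontrivial choice; the left-hand sides $i^a(\sigma_H)=\prod_{\chi} i^a_\chi(\sigma_H)$ and $i^z(\sigma_H)=\prod_\chi i^z_\chi(\sigma_H)$ are likewise assembled from local data. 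The first step I would take is to recast the products $i^a(\sigma_H), i^z(\sigma_H)$ additively: writing $a^{i^a(\sigma_H)}z^{i^z(\sigma_H)}$ as $a$ (resp.\ $z$) raised to a \emph{sum} of local exponents $\sum_\chi \epsilon^a_\chi$, $\sum_\chi \epsilon^z_\chi$ over crossings, where $\epsilon^a_\chi, \epsilon^z_\chi$ record the exponent contributed by the skein coefficient at $\chi$ along the path defining $\sigma_H$. (Here I would note that the notation in the statement uses $\prod$ but the quantities behave additively in the exponent; I would make this precise at the outset.)

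Next I would match local contributions one crossing at a time, splitting into the same four cases as in Lemma \ref{l.intslope}, organised by the sign of the crossing $\chi$ and by whether $\sigma_H'(\chi)=o$ or $\sigma_H'(\chi)=-\chi$ (the case $\chi\notin\chi(D)$ contributes nothing to either the $a$- or $z$-exponent, and contributes $0$ to both linking numbers since the curves $D^a_{\sigma_H}(\alpha), D^z_{\sigma_H}(\alpha)$ are twisted only at crossings in $\chi(D)$, so this case is immediate). For a positive crossing with $\sigma_H(\chi)=o$, the skein relation $aP(\hpc)-a^{-1}P(\hnc)=zP(\hco)$ contributes $(i^a_\chi, i^z_\chi)=(-1,1)$, which must be matched against the local intersection numbers of $D^a_{\sigma_H}(\alpha)$ and $D^z_{\sigma_H}(\alpha)$ with $D$; by construction $D^a_{\sigma_H}(\alpha)$ receives a twist from the Seifert (oriented) resolution contributing $-2$ to the oriented intersection (halved: $-1$), and $D^z_{\sigma_H}(\alpha)$ receives the $\sigma$-twist as in Figure \ref{f.ct} contributing $+2$ (halved: $+1$), exactly as illustrated in Figures \ref{f.cts} and \ref{f.curves_twistedH}. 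For a positive crossing with $\sigma_H'(\chi)=-\chi$, the coefficient is $(-2,0)$: here the crossing switch produces an extra full twist in $D^a_{\sigma_H}(\alpha)$ giving oriented intersection $-4$ (halved: $-2$), while no $z$ appears, consistent with $i^z_\chi=0$ and with $D^z_{\sigma_H}(\alpha)$ being untwisted (or rolled the opposite way with cancelling contributions). The two negative-crossing cases, with coefficients $(1,1)$ and $(2,0)$, are handled by the same computation with all signs reversed, using the adjustment ``roll the twisting the other way for a negative crossing'' noted after Figure \ref{f.curves_twistedH}.

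Summing the matched local contributions over all crossings of $D$ gives $\sum_\chi i^a_\chi(\sigma_H) = \frac{1}{2}\sum_\chi i_\chi(D^a_{\sigma_H}(\alpha),D) = \frac{1}{2}i(D^a_{\sigma_H}(\alpha),D)$, and likewise for $z$, which is the claim. I expect the main obstacle to be purely bookkeeping: getting the signs and the factor-of-two normalizations to agree in each of the four cases, and in particular verifying that the crossing-switch case for the $a$-variable genuinely doubles the twisting (so that the local intersection is $\pm 4$, halved to $\pm 2$) rather than producing something that only matches up to a global framing correction. A secondary subtlety is confirming that the curve systems $D^a_{\sigma_H}(\alpha)$ and $D^z_{\sigma_H}(\alpha)$ are isotopic to the boundaries of the surfaces described (the twisted Seifert surface for $a$, the $\sigma_H$-state surface for $z$), so that ``geometric intersection with $D$'' is literally the oriented linking number and the local-contribution decomposition is valid; this is the analogue of the state-surface argument in the proof of Lemma \ref{l.intslope} and should go through verbatim, crossing by crossing.
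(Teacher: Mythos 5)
Your proposal is correct and follows essentially the same route as the paper's proof: a crossing-by-crossing comparison of the local skein-coefficient exponents $(i^a_\chi,i^z_\chi)$ with the halved local oriented intersection numbers of $D^a_{\sigma_H}(\alpha)$ and $D^z_{\sigma_H}(\alpha)$ with $D$, split into the same cases (positive/negative crossing, oriented resolution versus crossing switch), with the extra twisting accounting for the $-2$ (resp.\ $+2$) in the switch case. Your remark that the products $\prod_\chi i^a_\chi(\sigma_H)$ should really be read as sums of local exponents is a fair and correct clarification of the paper's notation.
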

\begin{proof}
We consider $i^a(\sigma_{H, P})$ first. By construction, the oriented intersection of the two curves $i(D^a_{\sigma_{H, P}}(\alpha), D^-)$ counts $\mp 2$ at a crossing $\chi \in D$ with $\sgn(\chi) = \pm 1$ where $\sigma(\chi) = o$, and it counts $\mp 4$ for a crossing $\chi \in 
D$ with $\sgn(\chi) = \pm 1$ where $\sigma(\chi) = -\chi$, due to the extra twisting for the curves $\alpha^H_{o}(\pm)$. For the rest of the crossings of $D$ or $D^-$, there is no intersection between the two curves. Dividing by $2$ gives the correct local count, which is then added over all the intersections of the curves to give $i^a(\sigma_{H, P})$. 

Next for $i^z(\sigma_{H, P})$, the oriented intersection of the curves $D^z_{\sigma_{H, P}}(\alpha)$ and $D^+$ counts $2$ at every crossing $\chi^+\in c(D^+)$ on which $\sigma_{H, P}(\chi) = o$, and it counts 0 for the rest of the crossings where $D^z_{\sigma_{H, P}}(\alpha)$ has no intersection with $D$.

\end{proof}

Apply Lemma \ref{e.giHOMFLY} to \eqref{e.hksum} to obtain
\begin{equation} \label{e.homflyss}
P_L(a, z) = \sum_{\substack{\sigma_{H, P} \text{ a Jaeger state} \\ \text{ on the diagram $D$} }} \sgn(\sigma_{H, P}) a^{i^a(\sigma_{H, P})} z^{i^z(\sigma_{H, P})} \left(\frac{a-a^{-1}}{z} \right)^{|\sigma^K_{H, P}|-1} 
\end{equation}
where recall $|\sigma^K_{H, P}|$ is the number of circles in the set of circles resulting from applying the Kauffman state $\sigma^K_{H, P}$ associated to $\sigma_{H, P}$ to every crossing in the diagram $D$. 
\subsection{A topological model for the HOMFLY-PT polynomial} \label{ss.tmhomfly}

We use the state sum formulation \eqref{e.hksum} defined in the previous section to define $\Theta_H$ incorporating the intersection pairing $\ll \ , \ \gg_{\alpha^{\sigma_{H, P}}_{H}}$ that recovers the polynomial. 

Now we are ready to prove Theorem \ref{t.main}.

\

{\bf Proof of Theorem \ref{t.main}.} \label{ss.prooftmain}
Let $i^a(\sigma_{H, P}), i^z(\sigma_{H, P})$, and $\sgn(\sigma_{H, P})$ be defined as in the previous section, Section \ref{ss.tmhomfly}, and recall $\ll \ , \ \gg_{\alpha_H^{\sigma_{H, P}}}$ is the intersection pairing defined in Section \ref{ss.intersectionhomfly}. 

Similarly to $\Theta_J$, we define $\Theta_H(D)$. Again for simplicity of notation we will write $\sigma_H = \sigma_{H, P}$, and we will identify $i^a(\sigma_{H, P})$ and $i^z(\sigma_{H, P})$ with oriented intersections of curves via Lemma \ref{e.giHOMFLY}. 
\begin{defn} [Intersection form for the HOMFLY-PT polynomial]\label{intformulaH}
\begin{align*} 
&\Theta_H(D):= \\ 
&:= \sum_{\substack{\sigma_{H} \\  \text{ a Jaeger state} \\ \text{on the diagram $D$}} } \sgn(\sigma_{H}) a^{i^a(\sigma_{H })}z^{i^z(\sigma_{H})} \ll \mathscr{F}^H(\sigma^K_{H}), \mathscr{L}^H(\sigma^K_{H}) \gg_{\alpha_H^{\sigma_{H}}}
\end{align*}
\end{defn}

We restate Theorem \ref{t.main} for the convenience of the reader. 

\begin{tthm}[Theorem \ref{t.main}) (Topological model for the HOMFLY-PT polynomial via ovals]
\hspace{0pt}\\
Let $\Theta_H(D)(a, z) \in \mathbb{Z}[a^{\pm 1}, z^{\pm 1}]$ be the state sum of graded intersection between explicit Lagrangian submanifolds in a fixed configuration space on the Heegaard surface, as in Definition \ref{intHintr}. Then this topological model recovers the HOMFLY-PT invariant: 
 \begin{equation*}
   \Theta_H(D)(a, z) = P_L(a, z)  
 \end{equation*}
\end{tthm}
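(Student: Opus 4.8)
The strategy is to match $\Theta_H(D)$ term-by-term with the state sum \eqref{e.homflyss} for $P_L(a,z)$ that was derived in Section \ref{S:H}. Recall that \eqref{e.homflyss} expresses $P_L(a,z)$ as a sum over renormalized Kauffman states $\sigma_H$, each contributing $\sgn(\sigma_H)\, a^{i^a(\sigma_H)} z^{i^z(\sigma_H)} \left(\frac{a-a^{-1}}{z}\right)^{|\sigma_H|-1}$, while $\Theta_H(D)$ is the sum over the same index set of $\sgn(\sigma_H)\, a^{i^a(\sigma_H)} z^{i^z(\sigma_H)} \ll \mathscr F^H(\sigma_H), \mathscr L^H(\sigma_H)\gg_{\alpha^{\sigma_H}_H}$. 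Since the combinatorial prefactors $\sgn(\sigma_H)$, $i^a(\sigma_H)$, $i^z(\sigma_H)$ are literally the same in both expressions, it suffices to prove that for each state $\sigma_H$ the specialized intersection pairing equals $\left(\frac{a-a^{-1}}{z}\right)^{|\sigma_H|-1}$.

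First I would invoke Lemma \ref{intJ}, which already establishes exactly this: with the specialization $\alpha^{\sigma_H}_H$ obtained from the choice $Q_H = 1 - \frac{a-a^{-1}}{z}$, one has
\[
\ll \mathscr F^H(\sigma_H), \mathscr L^H(\sigma_H)\gg_{\alpha^{\sigma_H}_H} = \left(\frac{a-a^{-1}}{z}\right)^{|\sigma_H|-1}.
\]
Lemma \ref{intJ} in turn rests on Lemma \ref{intgeneralH}, which gives the generic statement $\ll \mathscr F^H(\sigma_H), \mathscr L^H(\sigma_H)\gg_{\alpha^{\sigma_H}_Q} = (1-Q)^{|\sigma_H|-1}$: the state circles not containing the cutting base point each contribute a factor $1-Q$ exactly as in the Jones case, while the one circle containing the shrunk oval contributes $1$ — this is the geometric incarnation of the renormalization by $|\sigma_H|-1$ rather than $|\sigma_H|$. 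Substituting $Q = Q_H$ turns $(1-Q_H)^{|\sigma_H|-1}$ into $\left(\frac{a-a^{-1}}{z}\right)^{|\sigma_H|-1}$, as required.

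I would then assemble the pieces: substituting the conclusion of Lemma \ref{intJ} into Definition \ref{intformulaH} gives
\[
\Theta_H(D) = \sum_{\sigma_H} \sgn(\sigma_H)\, a^{i^a(\sigma_H)} z^{i^z(\sigma_H)} \left(\frac{a-a^{-1}}{z}\right)^{|\sigma_H|-1},
\]
which is precisely the right-hand side of \eqref{e.homflyss}, and that equals $P_L(a,z)$. One should also note, using Lemma \ref{e.giHOMFLY}, that the exponents $i^a(\sigma_H)$ and $i^z(\sigma_H)$ appearing in $\Theta_H$ admit the promised topological interpretation as halved geometric intersection numbers $\frac{1}{2} i(D^a_{\sigma_H}(\alpha), D)$ and $\frac{1}{2} i(D^z_{\sigma_H}(\alpha), D)$ of twisted curve systems with the link diagram, so that the entire right-hand side is genuinely geometric.

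\textbf{Main obstacle.} The substantive content is entirely inside Lemma \ref{intgeneralH}, and in particular Step 3 of its proof: verifying that after shrinking the oval at the cutting point, the state circle carrying that oval admits \emph{exactly one} intersection tuple in the configuration space (the one hugging the base points), so that it contributes the trivial factor $1$ rather than $1-Q$. The delicate point is that shrinking the oval removes one puncture from its interior and, more importantly, makes that oval miss the adjacent blue arc along the state circle, which kills the second "opposite" intersection tuple $\bar{\bar{B}}^i$ that was present in the Jones case — one must check carefully that no other intersection configurations sneak in, and that the surviving tuple winds around no puncture so its local-system evaluation is $1$. Everything else — matching prefactors, the chain of specializations $Q \mapsto Q_H$, and the identification of $i^a, i^z$ with intersection numbers — is bookkeeping that has already been carried out in Lemmas \ref{intJ} and \ref{e.giHOMFLY}.
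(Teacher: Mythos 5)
Your proposal follows exactly the paper's argument: the proof of Theorem \ref{t.main} in Section \ref{ss.prooftmain} consists precisely of substituting the conclusion of Lemma \ref{intJ} (that the specialized pairing equals $\left(\frac{a-a^{-1}}{z}\right)^{|\sigma_H|-1}$) into the state sum \eqref{e.homflyss}, with the combinatorial prefactors matching by construction. Your additional remarks correctly locate the real geometric content in Step 3 of Lemma \ref{intgeneralH} and the role of Lemma \ref{e.giHOMFLY}, but these are elaborations of the same route, not a different one.
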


\begin{proof}
Since $\ll, \gg_{\alpha^{\sigma_{H, P}}_H}$ recovers $\left( \frac{a-a^{-1}}{z}\right)^{|\sigma^K_{H, P}|-1}$ in \eqref{e.homflyss} by Lemma \ref{intJ}, it follows  that $ \Theta_H(D) = P_L(a, z)$. 
\end{proof}

\bibliographystyle{plain}

\bibliography{references.bib}

\end{document}